\newtheorem{theorem}{Theorem}[section]
\newtheorem{lemma}[theorem]{Lemma}
\newtheorem{corollary}[theorem]{Corollary}
\newtheorem{proposition}[theorem]{Proposition}
\newtheorem{definition}[theorem]{Definition}
\theoremstyle{definition}
\theoremstyle{plain}
\def\CS{\mathcal S}
\newcommand{\beqlbl}{\begin{equation}}
\newcommand{\eeqlbl}{\end{equation}}
\newcommand{\dyck}[1]{\text{Dyck}^{#1}}
\newcommand{\dn}{\dyck{2n}}
\newcommand{\sn}{S_n}
\newcommand{\R}{\ensuremath{\mathbb{R}} }
\newcommand{\N}{\ensuremath{\mathbb{N}} }
\newcommand{\Z}{\ensuremath{\mathbb{Z}} }
\newcommand{\D}{\ensuremath{\mathcal{D}} }
\newcommand{\A}{\mathcal{A}}
\newcommand{\cf}{\mathbf{1}}
\renewcommand{\P}{\mathbb{P}}
\newcommand{\E}{\mathbb{E}}
\newcommand{\fl}[1]{\lfloor #1 \rfloor}
\tikzstyle{vertex}=[circle, draw, inner sep=0pt, minimum size=6pt]
\tikzstyle{Vertex}=[circle, draw, inner sep=0pt, minimum size=14pt]
\tikzstyle{Vertexc}=[circle, draw, inner sep=0pt, minimum size=14pt, fill=blue!30]
\tikzstyle{vertexc}=[circle, draw, inner sep=0pt, minimum size=6pt, fill=red!40]
\tikzstyle{vertexcg}=[circle, draw, inner sep=0pt, minimum size=6pt, fill=green!70!black]
\newcommand{\Vertex}{\node[Vertex]}
\newcommand{\Vertexn}{\node[]}
\newenvironment{pfofthm}[1]
{\par\vskip2\parsep\noindent{\em Proof of Theorem\ #1. }}{{\hfill
$\blacksquare$}
\par\vskip2\parsep}
\newcommand{\beq}{\begin{equation*}}
\newcommand{\eeq}{\end{equation*}}
\newcommand{\ba}{\begin{align*}}
\newcommand{\ea}{\end{align*}}
\newcommand{\matbegin}[1]{\left (  \begin{array} {#1} }
\newcommand{\matend}{ \end{array} \right ) }
\newcommand{\shortexc}{\text{SE}}
\newcommand{\bt}{\mathbf{t}}
\newcommand{\bT}{\mathbf{T}}
\newcommand{\Ht}{\mathrm{ht}}
\newcommand{\sA}{\mathscr{A}}
\title[Shapes and fluctuations for pattern-avoiding permutations]{Pattern-avoiding permutations and Brownian excursion Part I: Shapes and fluctuations}
\author[Christopher~Hoffman]{ \ Christopher~Hoffman$^\star\ddagger$}
\author[Douglas~Rizzolo]{ \ Douglas~Rizzolo$^{\star\dagger}$}
\author[Erik~Slivken]{ \ Erik~Slivken$^{\triangle \circ}$}
\thanks{\thinspace ${\hspace{-.45ex}}^\star$
Department of Mathematics,
University of Washington, Seattle, WA, 98195.
\hfill \\
\thinspace ${\hspace{-.45ex}}^\triangle$
Department of Mathematics,
University of California, Davis
Davis CA, 95616.
\hfill \\
${\hspace{-.45ex}}^\dagger$ 
Supported by NSF grant DMS-1204840
\hfil \\
${\hspace{-.45ex}}^\ddagger$ 
Supported by NSF grant DMS-1308645 and NSA grant H98230-13-1-0827
\hfil \\
${\hspace{-.45ex}}^\circ$ 
Supported by NSF RTG grant 0838212
\hfil \\
Email:
\hskip.02cm
\texttt{\{hoffman,drizzolo\}@math.washington.edu; erikslivken@math.ucdavis.edu}
}
\date{\today}
\begin{document}

\maketitle

\begin{abstract}
Permutations that avoid given patterns are among the most classical objects in combinatorics and have strong connections to many fields of mathematics, computer science and biology. In this paper we study the scaling limits of a random permutation avoiding
a pattern of length 3 and their relations to Brownian excursion.  
Exploring this connection to Brownian excursion allows us to strengthen the recent results of 
Madras and Pehlivan \cite{ML} and Miner and Pak \cite{mp}
as well as to understand many of the interesting phenomena that had previously gone unexplained. 
\end{abstract}

\section{Introduction}

\noindent

\smallskip

 Permutations are some of the most intensively studied objects in combinatorics.
One hundred years ago Percy MacMahon initiated the study of the class of pattern-avoiding permutations 
\cite{M}.  For $\pi \in \CS_k$ and $\tau\in \CS_n$, we say that $\tau$ contains the pattern $\pi$ if there exist $i_1<i_2<\cdots <i_k$ such that for all $1\leq r<s\leq k$ we have $\pi(r)<\pi(s)$ if and only if $\tau(i_r)<\tau(i_s)$.  We say $\tau$ avoids $\pi$, or is $\pi$-avoiding, if $\tau$ does not contain $\pi$. 
For example, a permutation $\tau \in \CS_n$ 
avoids the pattern $\textbf{321}$ if there exists no subsequence
$$1 \leq i_1 < i_2  < i_3 \leq n \qquad \text{ such that } \qquad
\tau(i_3)<\tau(i_2)<\tau(i_1).$$
  MacMahon showed that every \textbf{321}-avoiding permutation can be decomposed into two increasing subsequences and that the number of \textbf{321}-avoiding permutations of length $n$ is given by the $n$th Catalan number
$$C_n=\frac{1}{n+1}{2n\choose n}.$$

The modern study of pattern-avoiding permutations began with Donald Knuth who showed their importance  
in computer science. Knuth proved that the \textbf{231}-avoiding permutations are precisely those that can be sorted by a stack \cite{Knu}. He also showed that the number of $\textbf{231}$-avoiding permutations
is equal to $C_n$.
Further connections between pattern-avoiding permutations and sorting algorithms in computer science were explored by Tarjan, Pratt and others \cite{Pratt} \cite{Tarjan}.

From those starting points the study of pattern-avoiding permutations has gone in many directions.
One of the central  questions has been the exact enumeration of  $\CS_n(\pi)$, the set of permutations in $S_n$ that avoid $\pi$ \cite{gessel1990symmetric}.  Marcus and Tardos proved the Stanley-Wilf conjecture that says that $|\CS_n(\pi)|$ grows singly exponentially in $n$ for all $m$ and $\pi \in \CS_m$.  \cite{marcus2004excluded}. 
There are also strong connections to Kazhdan-Lusztig polynomials \cite{billey2001}, singularities of Schubert varieties \cite{elf}, Chebyshev polynomials  \cite{mansour} and  rook polynomials   \cite{babson}. Kitaev gives an extensive survey of the connections between pattern-avoiding permutations with other mathematical objects~\cite{Kit}.
In addition to their applications in computer science, pattern-avoiding permutations also are related to the partially asymmetric simple exclusion process model in statistical mechanics \cite{corteel2007tableaux} and the tandem duplication random-loss model in genome evolution \cite{bouvel2009variant}.

Another fundamental object in combinatorics is the statistics of uniform random permutations. These statistics have occupied a central place in both probability and combinatorics for hundreds of years since Montmort and Bernoulli showed that the distribution of the number of fixed points in a uniformly chosen random permutation is converging in distribution to a Poisson(1) random variable \cite{de1714essai}. A modern take on this classical result can be found in the study of  the cycle structure of a random permutation and other combinatorial stochastic processes \cite{pitmanbook}.

Finding the longest increasing subsequence of a random permutation is another probabilistic question that has generated extensive interest. 
This problem is part of the class of sub-additive processes which have typically proved intractible. This particular model has been home to a series of beautiful results starting with Vershik and Kerov who were able to show that the longest increasing subsequence in a permutation of length $n$ is typically on the order of $2\sqrt{n}$ \cite{vershik1981asymptotic} \cite{vershik1977asymptotics}.
This work was extended by Baik, Deift and Johansson who showed that the fluctuations around the mean converge to the Tracy-Widom distribution from random matrix theory \cite{baik1999distribution}.

Recently these two fundamental lines of research have merged with numerous results comparing statistics of a random pattern-avoiding permutation to the corresponding statistics of a uniformly random permutation.
The longest increasing subsequence of a pattern-avoiding permutation was studied in \cite{notknuth}
and the structure of the fixed points in pattern-avoiding permutations has been the subject many papers \cite{elizalde2, elizalde1, elizalde2004statistics, elizalde3, mp}.

In this paper we show fundamental connections between the shape of a random permutation avoiding a pattern of length 3 and Brownian excursion.  
These connections to Brownian excursion give our work a very geometric flavor. Glimpses of this geometric picture can be seen in the recent work of Janson \cite{Ja14}, Madras and Liu \cite{madras2010random}, Madras and Pehlivan \cite{ML} and Miner and Pak \cite{mp}. These papers study some analogs of the above results about uniformly random permutations, but they do not fully exploit this geometric point of view. The connections we find between pattern-avoiding permutations and Brownian excursion allow us to connect and vastly strengthen many previously disparate results concerning statistics of pattern-avoiding permutations. In particular we show that (in a sense that depends on the pattern) a random pattern-avoiding permutation converges to Brownian excursion.  In part II of this paper \cite{part2} we give very fine results about the distribution of fixed points. Hopefully these connections will serve as the building blocks of a unified theory of the structure of permutations avoiding a pattern of length three.

\subsection{Dyck paths and Brownian excursion}

All of our results are derived from bijections between Dyck paths and pattern-avoiding permutations.
Throughout the paper we use the following definition of a Dyck path.
\begin{definition}   A {\bf Dyck path} $\gamma$ is a sequence  $\{\gamma(x)\}_{x=0}^{2n}$ that satisfy the following conditions:

\begin{itemize}
\item $\gamma(0)=\gamma(2n)=0$
\item $\gamma(x) \geq 0$ for all $x \in \{0,1,\dots, 2n\}$ and
\item $|\gamma(x+1)-\gamma(x)| = 1$ for all $x \in \{0,1,\dots, 2n-1\}.$
\end{itemize}
\end{definition}

We often want to consider the function generated by a Dyck path through linear interpolation. Throughout this paper we often use the same notation to denote a sequence and the continuous function generated by extending it through linear interpolation.

Brownian excursion is the process 
 $\{\mathbbm{e}_t\}_{0 \leq t \leq 1}$  which is Brownian motion conditioned to be 0 at 0 and 1 and positive in the interior \cite{morters2010brownian}.
It is well known that the scaling limit of Dyck paths are Brownian excursion \cite{Ka76}  and that Dyck Paths of length 2n are in bijection with 
\textbf{321}-avoiding and \textbf{213}-avoiding permutations \cite{Knu, M}.

\subsection*{\textbf{321}-avoiding permutations}

\begin{figure} \label{introtau}
\centering
\begin{tikzpicture}
  \node [scale=.6, left] {\includegraphics{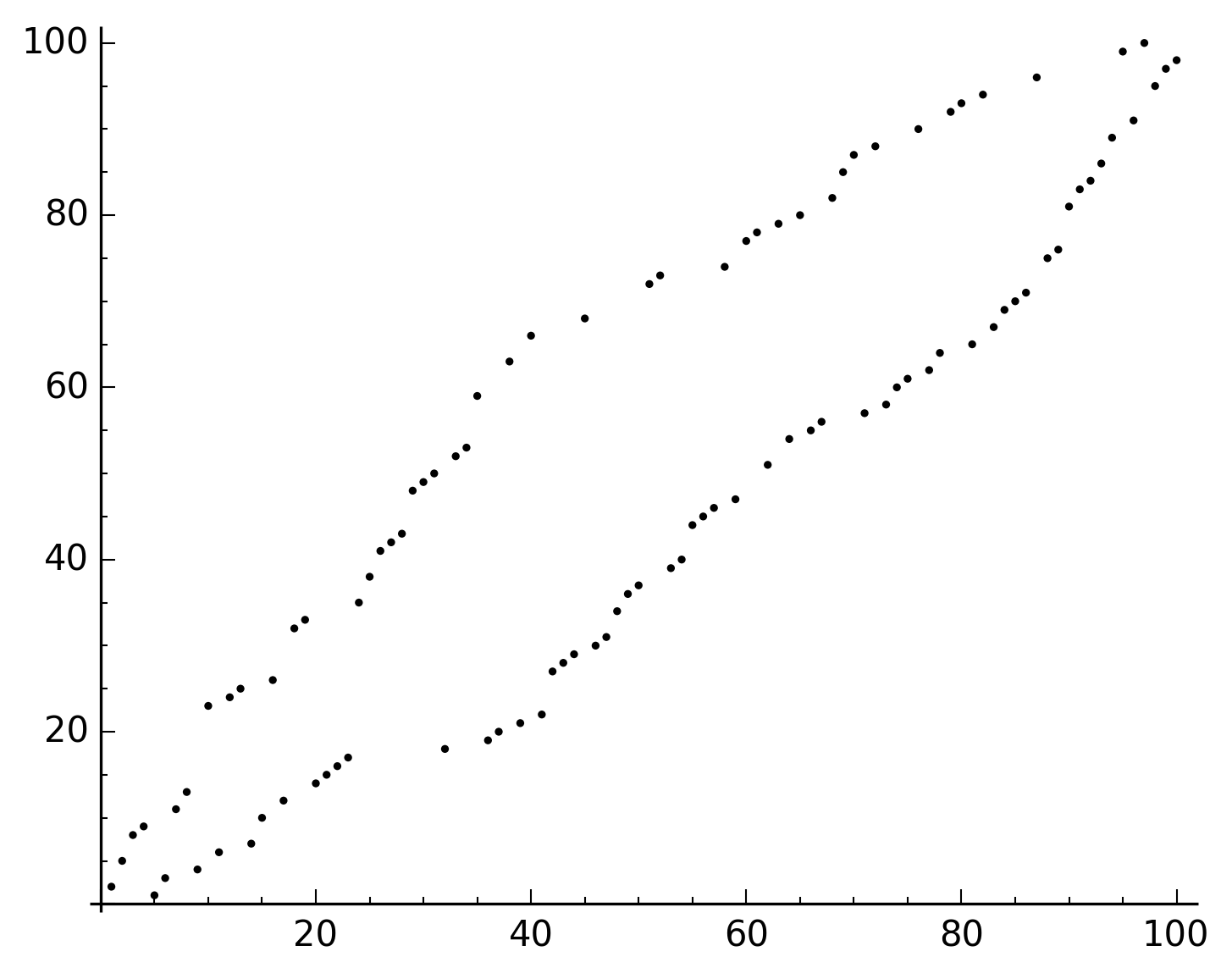}};
  \node [scale=.6,right] {\includegraphics{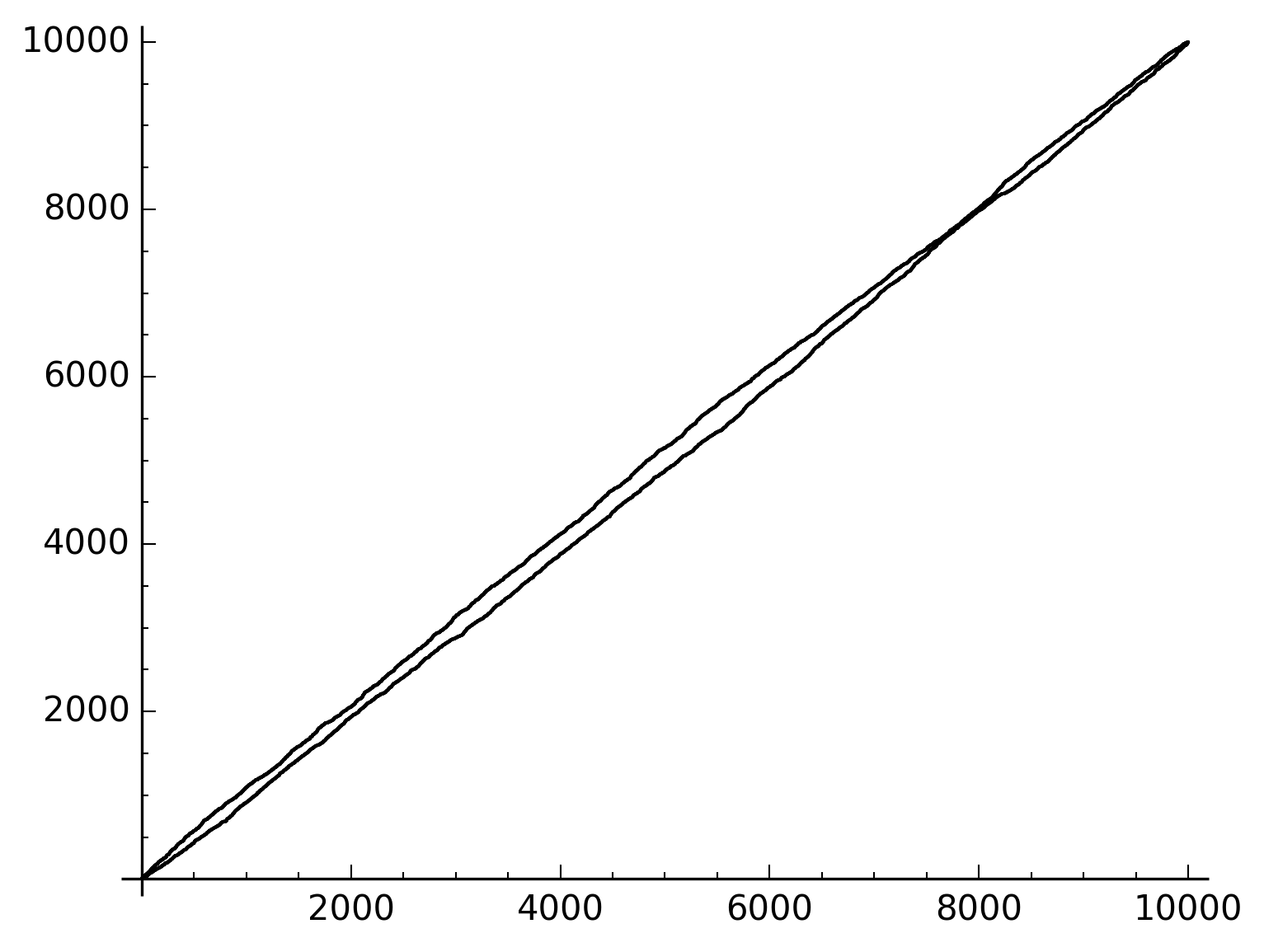}};
\end{tikzpicture}

\caption{ $\tau_{100}$ and $\tau_{10000}$ in $S_{100}(321)$ and $S_{10000}(321)$ respectively.} 
\end{figure}

MacMahon showed that \textbf{321}-avoiding permutation can be broken into two components that are (usually) of roughly equal size: a set of points above the diagonal and a set of points on or below the diagonal \cite{M}. 
From these two sequences we generate two functions. We prove that (properly normalized) both of them are converging in distribution to Brownian excursion. Moreover we show these two
functions are close to mirror images across the diagonal.

To show this we start with a Dyck path $\gamma$ of length $2n$. 
The Billey-Jockusch-Stanley bijection (described in Section \ref{offer}) gives us a permutation $\tau_{\gamma}$ which is \textbf{321}-avoiding \cite{BJS, callan}.  
For any permutation $\pi \in S_n$ define the exceedance process by
\begin{equation} \label{defense}E^n_{\pi}(i)=\pi(i)-i\end{equation}
and $E^n_{\pi}(0)=0$. We use the exceedance process to define several functions. 
For many of our results we define a subset of $A \subset \{0,1,2,\dots,n\}$ and then we define a function $F^A_{\pi}$ by linear interpolation through the points 
$$\left\{\left(\frac{a}{n}, \frac{E^n_\pi(a)}{\sqrt{2n}}\right)\right\}_{a \in A}.$$
For $\textbf{321}$-avoiding permutations $\tau_{\gamma}$ we define the functions
$F_{\tau_{\gamma}}^{E^+}(t)$ and $F_{\tau_{\gamma}}^{E^-}(t)$ 
where $E^+$ is the set of points where the exceedance process in \eqref{defense} is non-negative and $E^-$ is the set of points where the exceedance process is non-positive.
See the pictures in Figure \ref{introtau}.

\begin{figure}
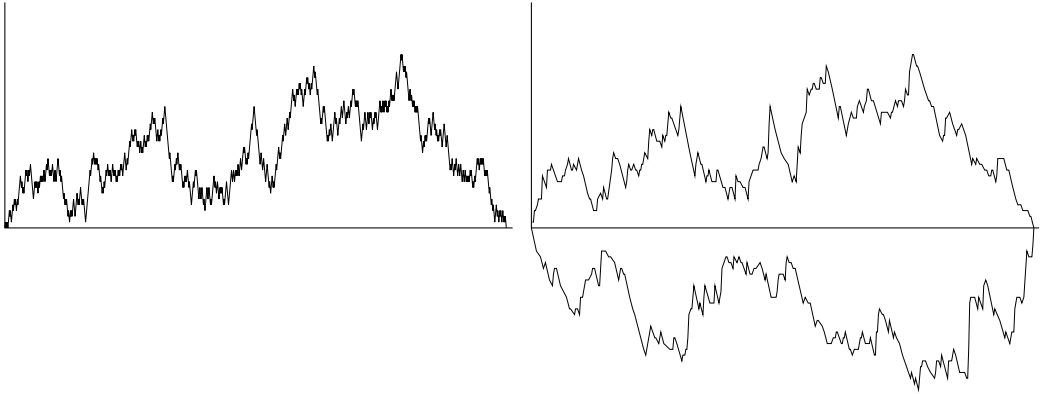
 

\centering
% [inline block 0: 1 envs, 27856 chars -> data_tex | \begin{tikzpicture} \draw[ultra thin] (0, 0) --...]

\caption{ $\Gamma^n(2nt)/\sqrt{2n}$ and the corresponding $F_{\tau_{\Gamma^n}}^{E^+}$ and $F_{\tau_{\Gamma^n}}^{E^-}$}
\label{introtau}

\end{figure}
\noindent

Our basic result about \textbf{321}-avoiding permutations is the following.

\begin{theorem} \label{cincodemayo}
Let $\Gamma^{n}$ be a uniformly chosen Dyck path of length $2n$ so $\tau_{\Gamma^n}$ is a uniformly chosen $\textbf{321}$-avoiding permutation. Then
$$\left(\frac{\Gamma^n(2nt)}{\sqrt{2nt}},F_{\tau_{\Gamma^n}}^{E^+}(t),F_{\tau_{\Gamma^n}}^{E^-}(t)\right)_{t\in [0,1]} \xrightarrow{\text{dist}}
(\mathbbm{e}_t,\mathbbm{e}_t,-\mathbbm{e}_t)_{t\in[0,1]},$$ 
where $(\mathbbm{e}_t,0\leq t\leq 1)$ is Brownian excursion and the convergence is in distribution on $C([0,1],\R^3)$.
\end{theorem}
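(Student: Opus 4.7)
The plan is to reduce the joint convergence to Kaigh's classical invariance principle \cite{Ka76} by showing that $F_{\tau_{\Gamma^n}}^{E^+}(t)$ and $F_{\tau_{\Gamma^n}}^{E^-}(t)$ are each uniformly close, in probability, to $+\Gamma^n(2nt)/\sqrt{2n}$ and $-\Gamma^n(2nt)/\sqrt{2n}$ respectively. Kaigh's theorem gives immediately that $\Gamma^n(2nt)/\sqrt{2n}$ converges in distribution in $C([0,1],\mathbb{R})$ to $\mathbbm{e}_t$, which handles the first coordinate. Once the two sup-norm approximations are in hand, the continuous mapping theorem combined with Slutsky's lemma yields the joint convergence to $(\mathbbm{e}_t,\mathbbm{e}_t,-\mathbbm{e}_t)$.

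The main combinatorial input is an explicit description of the Billey--Jockusch--Stanley bijection, to be developed in a preceding section. What I would extract is the following pointwise identity: for each $k\in\{1,\dots,n\}$, the exceedance $\tau_{\Gamma^n}(k)-k$ equals $\Gamma^n(2k)$ if $k\in E^+$, and equals $-\Gamma^n(2k)$ if $k\in E^-$, possibly up to an $O(1)$ discrepancy depending on convention. The heuristic is that under BJS the height $\Gamma^n(2k)$ records the surplus of up-steps over down-steps of $\Gamma^n$ in its first $2k$ steps, which coincides with MacMahon's decomposition of $\tau_{\Gamma^n}$ into two increasing subsequences and so measures exactly how far $\tau_{\Gamma^n}$ has drifted from the identity at position $k$.

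With this identity, the sup-norm approximation reduces to a modulus-of-continuity argument. At grid points $t=k/n$ with $k\in E^+$, the functions $F_{\tau_{\Gamma^n}}^{E^+}(t)$ and $\Gamma^n(2nt)/\sqrt{2n}$ agree to within $O(1/\sqrt{n})$. Between two consecutive indices $k<k'$ of $E^+$, the linear interpolant $F_{\tau_{\Gamma^n}}^{E^+}$ differs from $\Gamma^n(2n\cdot)/\sqrt{2n}$ by at most the oscillation of the latter on $[k/n,k'/n]$; and a gap of length $g=k'-k$ in $E^+$ forces $\Gamma^n$ to execute a descent of depth at least $g$ on $[2k,2k']$, so $g\leq\max_x \Gamma^n(x)=O_P(\sqrt{n})=o(n)$. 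Tightness of $\Gamma^n(2n\cdot)/\sqrt{2n}$ in $C([0,1])$, inherited from its Brownian-excursion limit, then implies that its modulus of continuity on any window of width $o(1)$ tends to zero in probability, closing the estimate. The argument for $F^{E^-}$ is symmetric.

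The main obstacle is the identification step: making the pointwise BJS correspondence between Dyck-path heights and signed exceedances precise requires careful bookkeeping about which indices belong to which of the two increasing subsequences of $\tau_{\Gamma^n}$, and about the exact matching of up/down steps of $\Gamma^n$ with positions versus values in the permutation. Once this deterministic identity is established, the remainder of the proof is a routine coupling of modulus-of-continuity estimates with Kaigh's theorem.
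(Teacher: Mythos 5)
Your overall architecture---show that $F^{E^+}_{\tau_{\Gamma^n}}$ and $F^{E^-}_{\tau_{\Gamma^n}}$ are uniformly within $o_P(1)$ of $\pm\Gamma^n(2n\cdot)/\sqrt{2n}$ and then combine Kaigh's invariance principle with Slutsky and the continuous mapping theorem---is exactly the paper's strategy, and your treatment of the grid spacing of $E^{\pm}$ via gaps and the modulus of continuity is fine. The genuine gap is in the identification step, which you describe as a deterministic pointwise identity holding ``up to an $O(1)$ discrepancy'' whose verification is ``routine bookkeeping.'' That claim is false, and it hides essentially all of the work. Under the BJS bijection the only exact formula is $\tau_\gamma(D_i)-D_i=1+\gamma(A_i+D_i)$ on the set $\D$ of cumulative descent counts (and even there the path is evaluated at $A_i+D_i$, not at $2D_i$; the two arguments differ by $|A_i-D_i|=\gamma(A_i+D_i)$, typically of order $\sqrt{n}$, so passing to $\gamma(2j)$ already costs an oscillation estimate rather than an $O(1)$ error). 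Off $\D$---which by Lemma \ref{downandout} is precisely where $\tau_\gamma(j)\le j$, i.e.\ the $E^-$ part---there is no closed formula at all: $\tau_\gamma(j)$ is the appropriate order statistic of $\bar{\A}$, and its value depends globally on the interleaving of the sequences $(A_k)$ and $(D_k)$. The approximation $\tau_\gamma(j)-j\approx-\gamma(2j)$ there is not a deterministic identity: for the path consisting of $n$ up-steps followed by $n$ down-steps, $\tau_\gamma$ is the identity permutation, so the left side is $0$ while $-\gamma(2j)$ is of order $-n$ in the middle.

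What is actually needed, and what the paper supplies, is a with-high-probability quantitative statement. One first shows (Lemma \ref{petrov}) that moderate-deviation regularity conditions on the runs $(A_k)$, $(D_k)$ hold outside an event of stretched-exponentially small probability, and then (Lemmas \ref{stanford}, \ref{ducks}, \ref{fathersday}) uses rank-counting inequalities comparing $|\bar{\A}\cap\{1,\dots,A_k\}|$ with $|\bar{\D}\cap\{1,\dots,D_i\}|$ to sandwich $\tau_\gamma(j)$ between $A_{k^-}$ and $A_{k^+}$ for suitable indices $k^{\pm}$, yielding $|\tau_\gamma(j)-j\pm\gamma(2j)|=O(n^{0.4})=o(\sqrt{n})$ on the good event. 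Without a quantitative, high-probability substitute for Lemmas \ref{stanford}--\ref{fathersday}, the value-matching step of your proposal is unproved, and that step is the core of the theorem.
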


The best previously known result along these lines was due to Miner and Pak who calculated the asymptotic distribution of $ \tau_\gamma(an)$ for all $a$, $0 \leq a \leq 1$ \cite{mp}.

\subsection*{\textbf{231}-avoiding permutations}

\begin{figure}
\centering
\begin{tikzpicture}
  \node [scale=.6,left] {\includegraphics{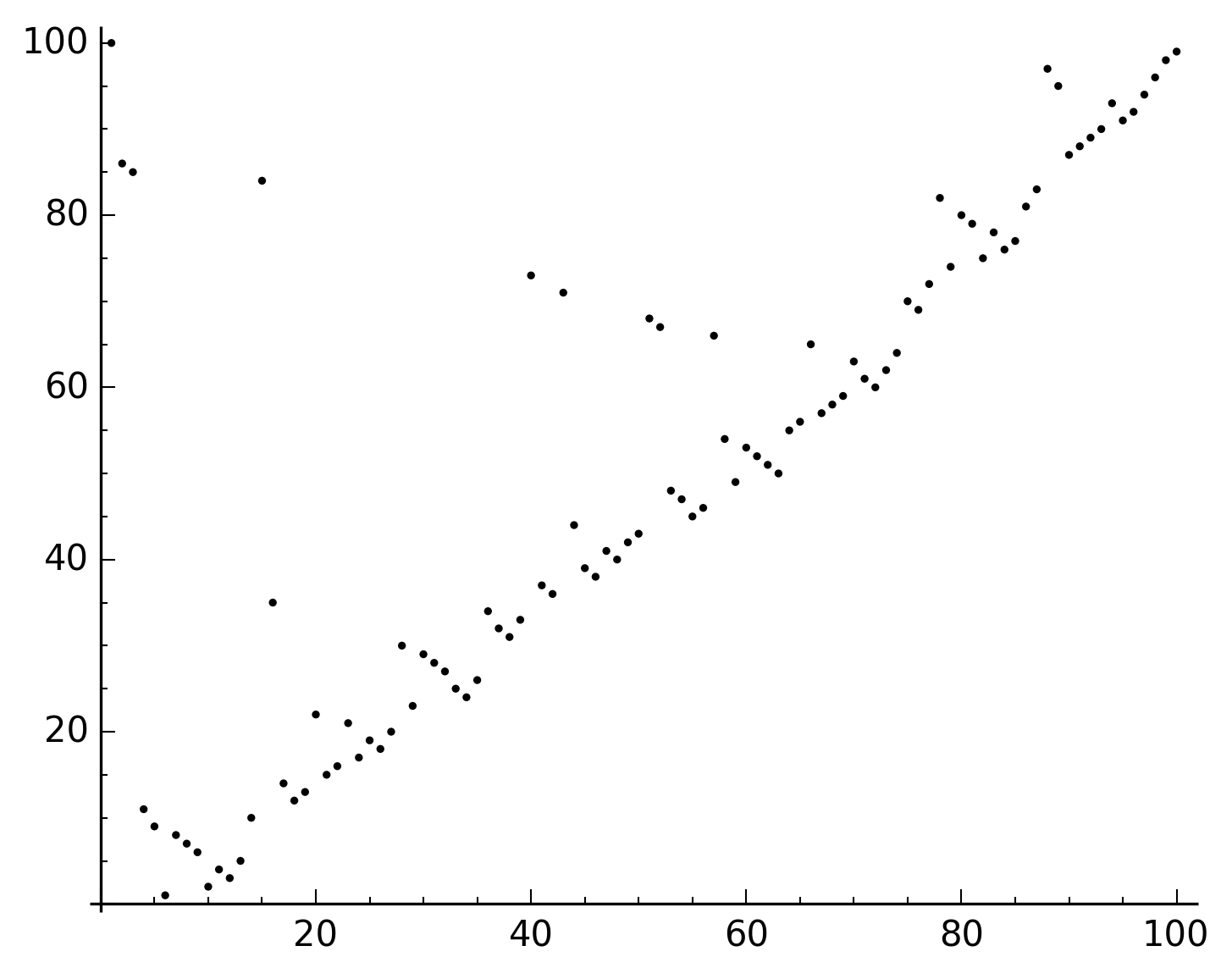}};
  \node [scale=.6,right] {\includegraphics{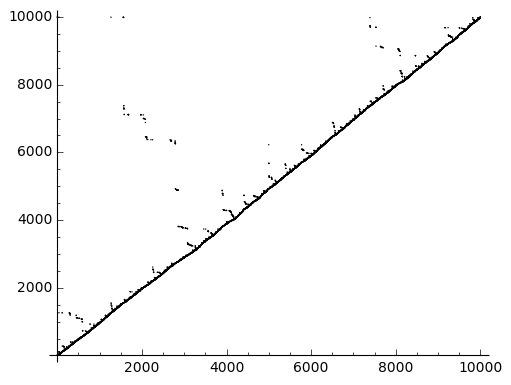}};
\end{tikzpicture}
\caption{ $\sigma_{100}$ and $\sigma_{10000}$ in $S_{100}(231)$ and $S_{10000}(231)$ respectively.} 
\end{figure}

In Section \ref{counteroffer} we define a bijection between Dyck paths and  \textbf{231}-avoiding permutations that allows us to explain the exceedance process in terms of specific geometric properties of Dyck paths.
For a Dyck path $\gamma$ we call its corresponding \textbf{231}-avoiding permutation $\sigma_{\gamma}$. 
Letting $\shortexc_\gamma$ consist of a set in $[n]$ where there are ``short excursions," we form the function $ F_{\sigma_{\gamma}}^{\shortexc{\gamma}}$ 
by linear interpolation and show that it very closely tracks $-\gamma(2nt)/\sqrt{2n}$ in the sup norm. Thus we get convergence to Brownian excursion. We also show that $\shortexc_\gamma$ is typically of size at least $n-n^{3/4+\epsilon}.$

\begin{figure}
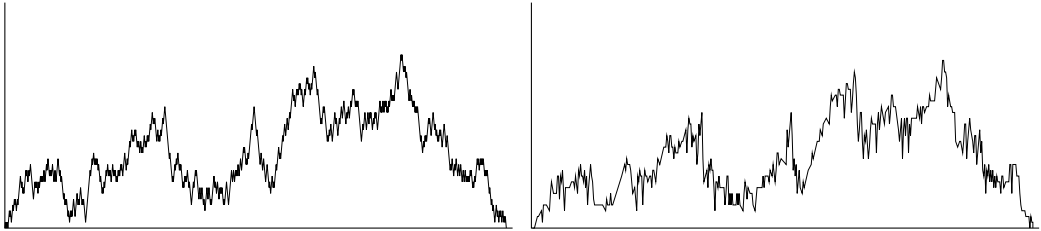
 

\centering
% [inline block 1: 1 envs, 25387 chars -> data_tex | \begin{tikzpicture} \draw[ultra thin](0, 0) --...]

\caption{ $\Gamma^n(2nt)/\sqrt{2n}$ along with $-F_n$}
\end{figure}

\begin{theorem} \label{london}

Let $\Gamma^{n}$ be a uniformly chosen Dyck path of length $2n$ so $\sigma_{\Gamma^n}$ is a uniformly chosen $\textbf{231}$-avoiding permutation.
For any $\epsilon>0$ there exists a sequence of sets $\shortexc_{\Gamma^n}$ such that
$$\P\left(|\shortexc_{\Gamma^n}|>n-n^{.75+\epsilon}\right) \to 1$$ and 
$$ \left(\frac{\Gamma^n(2nt)}{\sqrt{2n}}, F_{\sigma_{\Gamma^n}}^{\shortexc_{\Gamma^n}}(t)\right)_{t\in [0,1]} \overset{dist}{\longrightarrow} 
\left( \mathbbm{e}_t,-\mathbbm{e}_t\right)_{t\in [0,1]},$$
the convergence being in distribution on $C([0,1],\R^2)$.
\end{theorem}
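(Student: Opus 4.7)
The plan is to exploit the bijection $\gamma \mapsto \sigma_\gamma$ constructed in Section \ref{counteroffer}, which encodes the exceedance $E^n_{\sigma_\gamma}(i)=\sigma_\gamma(i)-i$ in terms of local geometric data of the Dyck path $\gamma$ near time $2i$. The heuristic is that when $\gamma$ makes only a short local excursion near $2i$, the combinatorics of the bijection force $\sigma_\gamma(i)$ to be close to $i-\gamma(2i)$; positions swallowed inside long sub-excursions may have arbitrary exceedances, and these are the positions the sampling set $\shortexc_\gamma$ is designed to throw away.

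Step one is to extract from the bijection a purely deterministic pointwise estimate of the form
\begin{equation*}
\bigl| E^n_{\sigma_\gamma}(i) + \gamma(2i) \bigr| \;\leq\; C\, \delta_\gamma(i),
\end{equation*}
where $\delta_\gamma(i)$ is a local statistic of $\gamma$ at step $2i$, for example a functional of the size of the maximal sub-excursion of $\gamma$ containing that step. Fixing a threshold $\delta_n$ still to be tuned, I would set
\begin{equation*}
\shortexc_\gamma \;=\; \bigl\{ i \in [n] \,:\, \delta_\gamma(i)\leq \delta_n \bigr\}.
\end{equation*}
For $i\in \shortexc_\gamma$ this gives the sup-norm closeness at the sampled grid points, as long as $\delta_n = o(\sqrt{n})$.

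The main obstacle is to control the size of $\shortexc_{\Gamma^n}^c$. A position lies in the complement only if it is contained in some sub-excursion of $\Gamma^n$ whose local geometry exceeds $\delta_n$. Decomposing $\Gamma^n$ along its sub-excursions, one can quantify the total length absorbed by ``large'' sub-excursions by a direct Catalan-number counting (the Dyck paths of length $2n$ containing a fixed sub-excursion of length $2\ell$ can be enumerated explicitly) or by transferring to $\mathbbm{e}$ via Kaigh's invariance principle and applying It\^o excursion theory. Tuning $\delta_n$ so that this expected mass is of order $n^{3/4+\epsilon/2}$ and applying Markov's inequality should yield the probability bound. Pinning the exponent at $3/4$, rather than the trivial bound $1$ obtained by a union bound, is the delicate step: it requires using the exact order of fluctuations of the local excursion statistics of a uniform Dyck path instead of worst-case estimates.

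Once these two ingredients are in place, the theorem follows by routine arguments. Kaigh's theorem gives $\Gamma^n(2n\cdot)/\sqrt{2n}\Rightarrow \mathbbm{e}$ in $C([0,1])$. The pointwise closeness on $\shortexc_{\Gamma^n}$ controls $F_{\sigma_{\Gamma^n}}^{\shortexc_{\Gamma^n}}(t)+\Gamma^n(2nt)/\sqrt{2n}$ at the grid points retained in $\shortexc_{\Gamma^n}$, while the linear-interpolation error across the gaps, each of width at most $|\shortexc_{\Gamma^n}^c|/n = O(n^{-1/4+\epsilon})$, is bounded by the modulus of continuity of $\Gamma^n(2n\cdot)/\sqrt{2n}$. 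Tightness of Brownian excursion forces this error to vanish in probability. Slutsky's theorem then upgrades this to the joint convergence in $C([0,1],\R^2)$ stated in Theorem \ref{london}.
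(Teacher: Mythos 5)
Your outline follows the paper's route: use the excursion bijection to express the exceedance through local excursion data, discard the indices lying in large sub-excursions, bound the discarded mass by an exact count of large sub-excursions (the paper does your ``direct Catalan-number counting'' by computing $\E\,\hat\xi_k(\bT^n)$ with generating functions for fringe-subtree counts), and observe that the two competing constraints --- the pointwise error $l_i/2$ must be $o(\sqrt{n})$, while the expected number of indices with $|\bt^{\Gamma^n}_{v_i}|\geq cn^{\alpha}$ is of order $n^{1-\alpha/2}$ --- force the exponent $3/4$. The concluding step (Kaigh's invariance principle plus a modulus-of-continuity bound across the interpolation gaps, then Slutsky) is also the paper's.

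There is, however, a genuine gap in your step one. The bijection gives the exact identity $\sigma_\gamma(i)-i=l_i/2-h_i$ with $h_i=\gamma(v_i)$, where $v_i$ is the position of the $i$th up-step, \emph{not} the position $2i$. What is deterministic and local is therefore $\left|E^n_{\sigma_\gamma}(i)+\gamma(v_i)\right|=l_i/2$; no deterministic local bound of the form $\left|E^n_{\sigma_\gamma}(i)+\gamma(2i)\right|\leq C\delta_\gamma(i)$ exists. Indeed $2i-v_i=h_i$, so the only deterministic control on the discrepancy is $|\gamma(2i)-\gamma(v_i)|\leq h_i$, which is typically of order $\sqrt{n}$ --- exactly the scale of the theorem. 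Closing this requires a separate probabilistic ingredient: uniformly in $i$, $|\Gamma^n(2i)-\Gamma^n(v_i)|=o(\sqrt{n})$ with high probability. The paper supplies this in Proposition \ref{proposition height limit}: represent the conditioned path as a simple random walk, show $\max_i|V_i-2i|=O(n^{\alpha})$ for some $\alpha<1$ using moderate deviations for the i.i.d.\ geometric gaps $V_i-V_{i-1}$, combine with a moderate-deviation modulus of continuity for the walk over windows of width $n^{\alpha}$, and absorb the conditioning on $\eta(S)=2n+1$ at the cost of a polynomial factor against an exponential tail. Without this step your sampled grid points are compared to the wrong path values and the sup-norm estimate does not follow; with it, the remainder of your outline is sound.
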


\section{321-avoiding permutations}
\label{offer}

We now describe a bijection (which is often known as the Billey-Jockusch-Stanley or BJS bijection) from Dyck paths of length $2n$ to \textbf{321}-avoiding permutations of length $n$ \cite{callan}.
Fix a Dyck path $\gamma:\{0,1,\dots, 2n\} \to\N$ of length $2n$.  Let $m$ be the number of runs of increases (or decreases) in $\gamma$. Let $a_i$ be the number of increases in the $i$th run of increases in $\gamma$. Let $A_i=\sum_{j=1}^ia_j$ and
let $\A = \cup_{i=1}^{m-1}\{A_i\}$ and $\bar \A=\{1,2,\dots,n\} \setminus (1+\A)$.
Similarly we define $d_i$ and $D_i=\sum_{j=1}^id_j$ based on the length of the descents. Then define 
$\D=\cup_{i=1}^{m-1}\{D_i\}$ and $\bar \D= \{1,2,\dots,n\} \setminus \D$.
We also set $A_0=D_0=0$.
Let $\tau_{\gamma}$ be the corresponding  \textbf{321}-avoiding permutation with the BJS bijection. This is defined by
$\tau_{\gamma}(D_i)=1+A_i$ on $\D$ and such that $\tau|_{\bar \D}=\bar \A$ 
and is increasing on $\bar \D$.

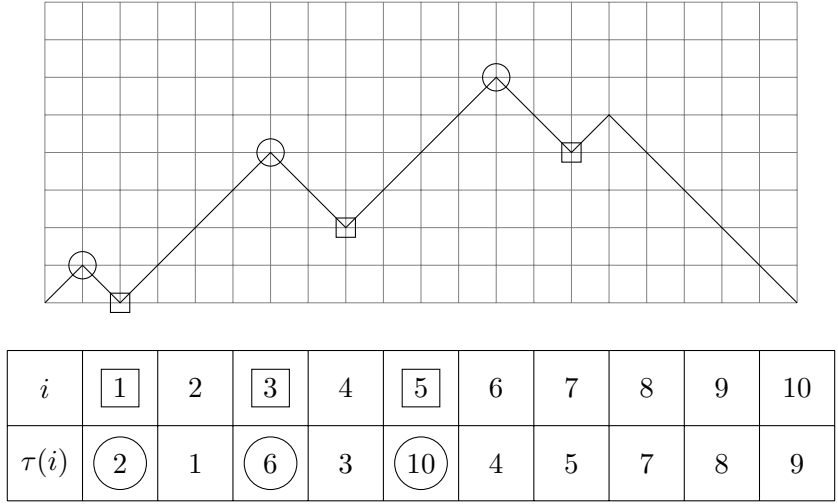
\begin{figure}
\centering
\begin{tikzpicture}

\draw [step=.5, help lines] (0,0) grid (10,4);

\draw (0,0)--(0, 0)--(1/2, 1/2)--(1, 0)--(3/2, 1/2)--(2, 1)--(5/2, 3/2)--(3, 2)--(7/2, 3/2)--(4, 1)--(9/2, 3/2)--(5, 2)--(11/2, 5/2)--(6, 3)--(13/2, 5/2)--(7, 2)--(15/2, 5/2)--(8, 2)--(17/2, 3/2)--(9, 1)--(19/2, 1/2)--(10, 0);

[place/.style={circle,draw,minimum size=6mm},
transition/.style={rectangle,draw,minimum size=4mm}]

\node at (3,2) [circle,draw]{};
\node at (.5,.5) [circle,draw]{};
\node at (6,3) [circle,draw]{};

\node at (1,0) [rectangle,draw] {};
\node at (7,2) [rectangle,draw] {};
\node at (4,1) [rectangle,draw] {};

\end{tikzpicture} 

\ \\

\begin{tikzpicture}
\draw [step=1,] (0,0) grid (11,2);

\node at (.5,.5) {$\tau(i)$};
\node at (.5,1.5) {$i$};
\foreach \i in {1,...,10}
	\node at (.5 + \i,1.5) {\i};	

\draw (1.25,1.25) rectangle (1.75,1.75);
\draw (3.25,1.25) rectangle (3.75,1.75);
\draw (5.25,1.25) rectangle (5.75,1.75);
\draw (1.5,.5) circle (.35cm);
\draw (3.5,.5) circle (.35cm);
\draw (5.5,.5) circle (.35cm);
\node at (1.5,.5) {2};
\node at (3.5,.5) {6};
\node at (5.5,.5) {10};

\node at (2.5,.5) {1};
\node at (4.5,.5) {3};
\node at (6.5,.5) {4};
\node at (7.5,.5) {5};
\node at (8.5,.5) {7};
\node at (9.5,.5) {8};
\node at (10.5,.5) {9};

\end{tikzpicture}
\caption{Dyck path of length 20 with correspond \text{\bf 321}-avoiding permutation.}
\end{figure}

For the rest of the section we let $y_0=0$ and 
\begin{equation}
\label{neal}
y_i=A_i-D_i=\gamma(A_i+D_i).
\end{equation}

\begin{lemma} \label{downandout}
For any $\gamma \in \dn$ and $j \in \{1,2,\dots,n\}$
$$\tau_{\gamma}(j)>j \ \ \ \text{ if } j \in \D$$
and
$$\tau_{\gamma}(j) \leq j \ \ \ \text{ if } j \not\in \D$$
\end{lemma}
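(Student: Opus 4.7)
The plan is to reduce both parts of the lemma to the single geometric input $A_i \geq D_i$ for every $i$, which is just the non-negativity of $\gamma$: by \eqref{neal}, $A_i - D_i = y_i = \gamma(A_i + D_i) \geq 0$.

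With this in hand, the first case is immediate. If $j \in \D$ then $j = D_i$ for the unique $i \in \{1,\dots,m-1\}$ determined by the strictly increasing sequence $D_1, \dots, D_{m-1}$, and by the BJS prescription
\[ \tau_\gamma(j) \,=\, 1 + A_i \,\geq\, 1 + D_i \,>\, D_i \,=\, j. \]

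For the second case I would argue by a rank comparison. Since $\tau_\gamma|_{\bar\D}$ is by definition the unique order-preserving bijection from $\bar\D$ onto $\bar\A$, the statement ``$\tau_\gamma(j) \leq j$ for every $j \in \bar\D$'' is equivalent to
\[ |\bar\A \cap [1,j]| \,\geq\, |\bar\D \cap [1,j]| \qquad\text{for all } j \in \bar\D, \]
and taking complements in $[1,j]$ this is in turn equivalent to $|\D \cap [1,j]| \geq |(1+\A) \cap [1,j]|$. I would then expand
\[ |\D \cap [1,j]| = \#\{i \leq m-1 : D_i \leq j\}, \qquad |(1+\A) \cap [1,j]| = \#\{i \leq m-1 : A_i \leq j-1\}, \]
and read the inequality off directly from $D_i \leq A_i$: any index $i$ with $A_i \leq j-1$ automatically satisfies $D_i \leq A_i \leq j-1 \leq j$, so the second set injects into the first.

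There is no genuine obstacle here; the only things to watch are bookkeeping items — the ``$+1$'' shift in the definition $\bar\A = \{1,\dots,n\} \setminus (1+\A)$, and the convention that $\A$ and $\D$ index only $i \leq m-1$ (so that $A_m = D_m = n$ are excluded) — both of which are handled cleanly by the explicit counts above. Otherwise the entire lemma is essentially a two-line consequence of the non-negativity of $\gamma$.
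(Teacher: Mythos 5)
Your proof is correct and follows essentially the same route as the paper: the first case is the identical computation $\tau_\gamma(D_i)=1+A_i\geq 1+D_i>D_i$, and the second case is the same rank comparison between $\bar\D$ and $\bar\A$ driven by $D_i\leq A_i$ (non-negativity of $\gamma$), which the paper phrases as a contradiction for the specific index $i$ with $D_{i-1}<j<D_i$ and you phrase as an injection $\{i:A_i\leq j-1\}\hookrightarrow\{i:D_i\leq j\}$. The bookkeeping (the $+1$ shift and the restriction to $i\leq m-1$) is handled correctly.
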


 \begin{proof}
 If $j \in \D$ then there exists $i$ such that $D_i=j$ and
 $$\tau_{\gamma}(j)=\tau_{\gamma}(D_i)=1+A_i=1+A_i-D_i+D_i=1+y_i+D_i>D_i=j.$$
 If $j \not\in \D$ then there exists $i$ such that $D_{i-1}<j<D_i.$
 Note that $j$ is the $j-(i-1)$st element of $\bar \D$.
 As $\tau_{\gamma}$ maps $\bar \D$ monotonically to $\bar \A$ we get that
 $\tau_{\gamma}(j)$ is the $j-(i-1)$st element of $\bar \A$.
 There are at most $i-1$ elements of $\bar \A$ that are less than or equal to $j$. (Otherwise $1+A_i \leq j<D_i$ contradicting the non-negativity of $\gamma$.) Thus the $j-(i-1)$st element of $\bar \A$ is at most $j$ and $\tau_{\gamma}(j)\leq j$.
 \end{proof}
 
 We now undertake a more detailed analysis to show that for most \textbf{321}-avoiding permutations if $i,j$ are such that 
 $D_{i-1}<j< D_i$ then $$\tau_{\gamma}(j) \approx j-y_i$$ and if
 $j= D_i$ then $$\tau_{\gamma}(j) \approx j+y_i.$$
 Our first step is the following lemma.
\begin{lemma} \label{stanford}
Fix a Dyck path $\gamma \in \dn$ and $j \not \in \D$. There exists $i$ such that 
$D_{i-1}<j<D_i$. Then for $k \in\{1,\dots,m-1\}.$
\begin{enumerate}
\item If $A_k-(k-1)>D_i-i$ \hspace{.555in}then $\tau_{\gamma}(j)<A_k$.
\item If $A_k-(k-1)<D_{i-1}-(i-1)$ then $\tau_{\gamma}(j)>A_k+1$.
\end{enumerate}
\end{lemma}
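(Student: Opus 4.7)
The plan is to reduce both inequalities to elementary counting on $\bar\A$. Since $j \notin \D$ and $D_{i-1} < j < D_i$, exactly $i-1$ elements of $\D$ (namely $D_1, \ldots, D_{i-1}$) lie in $\{1, \ldots, j\}$, so $j$ is the $(j - i + 1)$st element of $\bar\D$ in increasing order. Because the BJS bijection sends $\bar\D$ monotonically onto $\bar\A$, $\tau_\gamma(j)$ is the $(j - i + 1)$st element of $\bar\A$. Consequently, $\tau_\gamma(j) < A_k$ is equivalent to $|\bar\A \cap \{1, \ldots, A_k - 1\}| \geq j - i + 1$, and $\tau_\gamma(j) > A_k + 1$ is equivalent to $|\bar\A \cap \{1, \ldots, A_k + 1\}| < j - i + 1$.

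For part (a), I would lower bound the count. Since $\bar\A = \{1, \ldots, n\} \setminus \{1 + A_\ell\}_{\ell=1}^{m-1}$ and the $A_\ell$ are strictly increasing (each $a_\ell \geq 1$), at most $k-1$ of the forbidden values $1 + A_\ell$ can lie in $\{1, \ldots, A_k - 1\}$, namely those with $\ell < k$. Hence $|\bar\A \cap \{1, \ldots, A_k - 1\}| \geq A_k - k$. The hypothesis $A_k - (k-1) > D_i - i$ together with integrality yields $A_k - k \geq D_i - i$, and $j < D_i$ gives $j - i + 1 \leq D_i - i$; chaining these closes the argument.

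For part (b), the count is exact: $1 + A_\ell \leq A_k + 1$ iff $A_\ell \leq A_k$ iff $\ell \leq k$, so $|\bar\A \cap \{1, \ldots, A_k + 1\}| = A_k + 1 - k$. The hypothesis $A_k - (k-1) < D_{i-1} - (i-1)$ rearranges (using integrality) to $A_k + 1 - k \leq D_{i-1} - i$, while $j > D_{i-1}$ gives $j - i + 1 \geq D_{i-1} - i + 2$, so the count lies strictly below $j - i + 1$.

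The main obstacle I foresee is purely bookkeeping. The lower bound in (a) is not tight, since $1 + A_{k-1}$ may or may not lie in $\{1, \ldots, A_k - 1\}$ depending on whether $a_k \geq 2$; nevertheless, the strict inequalities in the hypotheses combined with the integrality of all quantities provide exactly the slack required. Beyond this subtlety, the proof reduces to counting and routine inequality manipulation.
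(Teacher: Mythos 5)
Your proof is correct, and it is essentially the same argument as the paper's: both reduce the lemma to comparing cardinalities of initial segments of $\bar\A$ and $\bar\D$ via the fact that $\tau_\gamma$ restricted to $\bar\D$ is the increasing bijection onto $\bar\A$ (you compute the rank $j-i+1$ of $\tau_\gamma(j)$ directly, while the paper routes the same count through the auxiliary element $x=\max(\bar\A\cap\{1,\dots,A_k\})$). The bookkeeping in both parts checks out.
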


\begin{proof}
Let $x=\max\bigg(\bar \A \cap\{1,2,\dots,A_k\}\bigg).$
In the first case we note that 
$$|\bar \A \cap\{1,2,\dots,A_k\}|=A_k-(k-1)>D_i-i=|\{1,2,\dots,D_i\}\cap \bar \D|.$$
Thus $$\tau_{\gamma}^{-1}(x)>D_i>j.$$
As $\tau_{\gamma}$ is monotone on the complement of $\D$ we get that 
$$A_k\geq x =\tau_{\gamma}(\tau_{\gamma}^{-1}(x))>\tau_{\gamma}(j).$$

In the second case
$$|\bar \A \cap\{1,2,\dots,A_{k}\}|=A_{k}-(k-1)<D_{i-1}-(i-1)
=|\{1,2,\dots,D_{i-1}\}\cap \bar \D|.$$
Thus $\tau_{\gamma}^{-1}(x)<D_{i-1}<j$ and
as $\tau_{\gamma}$ is monotone on the complement of $\D$
$$x = \tau_{\gamma}(\tau_{\gamma}^{-1}(x))<\tau_{\gamma}(j).$$
As $\tau_{\gamma}(j)>x$ and $\tau_{\gamma}(j) \in \bar \A$  we get that 
$$\tau_{\gamma}(j)>1+A_k.$$
\end{proof}

We now identify a class of moderate deviation properties of Dyck paths.
\begin{definition} \label{disparate}
We say that a Dyck path $\gamma \in \dn$ with associated sequences $A_i$ and $D_i$ satisfies the Petrov conditions if
\begin{enumerate}[(a)]
\item $\max_{x \in \{0,1,...,2n\}} \gamma(x)<.4n^{.6}$ \label{htcone}
\item $|\gamma(x)-\gamma(y)|<.5n^{.4}$ for all $x,y$ with $|x-y|<2n^{.6}$ \label{porter}

\item  $|A_i-A_j-2(i-j)|<.1|i-j|^{.6}$ for all $i,j$ with $|i-j|\geq n^{.3}$ and \label{dorsett}
\item  $|D_i-D_j-2(i-j)|<.1|i-j|^{.6}$ for all $i,j$ with $|i-j|\geq n^{.3}$ \label{tony}
\end{enumerate}
\end{definition}

\begin{lemma} \label{petrov}
With high probability the Petrov conditions are satisfied. The probability that they are not satisfied is decaying exponentially in $n^{c}$ for some $c>0$.
\end{lemma}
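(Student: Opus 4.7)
The plan is to reduce to moderate-deviation inequalities of Petrov--Bernstein type for the unconditioned simple symmetric random walk. Writing $(S_k)_{k=0}^{2n}$ for such a walk started at $0$, the Dyck path $\Gamma^n$ has the law of $(S_k)$ conditional on the excursion event $E = \{S_{2n}=0,\, S_k \geq 0 \text{ for all }k\}$, which has probability $\asymp n^{-3/2}$ (a $n^{-1/2}$ factor for the bridge condition and a $n^{-1}$ factor from the cycle lemma for non-negativity). Consequently any event $F$ with $\P(F) \leq \exp(-c n^{\alpha})$ on the unconditioned walk satisfies $\P(F \mid E) \leq n^{3/2}\exp(-c n^{\alpha})$, which remains exponentially small in $n^{c'}$ for any $c'<\alpha$.

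For condition (a), the reflection principle combined with Hoeffding's inequality gives $\P(\max_k S_k \geq 0.4 n^{0.6}) \leq 2\exp(-c n^{0.2})$. For condition (b), Hoeffding applied to a single increment $S_y - S_x$ with $|y-x|\leq 2n^{0.6}$ gives $\P(|S_y - S_x| \geq 0.5 n^{0.4}) \leq 2\exp(-c n^{0.2})$, and a union bound over the $O(n^2)$ pairs $(x,y)$ loses only a polynomial factor that is absorbed.

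For conditions (c) and (d), the key observation is that the successive up-run lengths $(a_i)$ of the unconditioned walk are i.i.d.\ geometric random variables with mean $2$ (and likewise the down-run lengths $(d_i)$), so $A_i$ and $D_i$ are partial sums of i.i.d.\ variables with exponential moments. Petrov's moderate-deviation inequality then yields $\P(|A_i - A_j - 2(i-j)| \geq 0.1 |i-j|^{0.6}) \leq \exp(-c|i-j|^{0.2})$, and for $|i-j|\geq n^{0.3}$ this gives $\exp(-c n^{0.06})$. A union bound over the at most $O(n^2)$ relevant pairs $(i,j)$ preserves exponential decay in $n^{0.06}$, and the same argument handles $D_i$.

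The main technical subtlety is the careful handling of the excursion conditioning in (c) and (d): the run-length sequences are not exactly i.i.d.\ under the Dyck law, so one must check that conditioning on $E$ alters the Petrov-type estimate only by a polynomial factor, which the exponential decay absorbs. This can be carried out either via a local limit theorem for the random walk bridge that reduces the conditioned probability to the unconditioned one, or by directly counting Dyck paths with prescribed run-length profiles. After either reduction, the four bounds combine to give failure probability exponentially small in $n^c$ for some $c>0$ (in fact $c=0.06$ suffices throughout).
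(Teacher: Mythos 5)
Your proof is correct and follows essentially the same route as the paper's (carried out in Appendix A): realize the Dyck path as a simple random walk conditioned on an excursion event of probability $\asymp n^{-3/2}$, prove unconditional Petrov-type moderate-deviation bounds --- using for (c) and (d) that the run lengths of the unconditioned walk are i.i.d.\ geometric with mean $2$ --- take union bounds over polynomially many pairs, and absorb the $n^{3/2}$ conditioning factor into the stretched-exponential decay. The paper arrives at the same exponent $n^{0.06}$ for conditions (c) and (d), differing only in that it invokes Petrov's maximal and moderate-deviation inequalities where you use the reflection principle and Hoeffding, which are interchangeable for $\pm 1$ increments.
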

\begin{proof}
These results are standard Petrov style moderate deviation results \cite{Petrov1}. The general type of conditioning argument we need appears in \cite{MaMo03, PiRi13}. However we have not seen the exact results that we need anywhere in the literature so we include proofs of these statements in Appendix \ref{appendixa}.
\end{proof}

From these conditions we can derive many other moderate deviation results. We now list the ones that we will need.

\begin{lemma} \label{voucher}
If a Dyck path $\gamma \in \dn$ with associated sequences $A_i$ and $D_i$ satisfies the Petrov conditions then
$y_i <n^{.4}$ for all $i<n^{.6}$.

and for all $i>m-n^{.6}$.
$|A_i-A_{i-1}|, |D_i-D_{i-1}|<n^{.18}$ for all $i$.
This implies $|y_i-y_{i-1}|<n^{.18}$ for all $i$. 
Finally every consecutive sequence of length at least $n^{.3}$ has at least one element of $\D$ and at least one element of $\bar \D$.
\end{lemma}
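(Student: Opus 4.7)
The plan is to derive the four claims from the Petrov conditions (a)--(d) in sequence, reusing earlier bounds as we go.

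First I would bound $y_i < n^{.4}$ near both endpoints. Recall $y_i = A_i - D_i = \gamma(A_i + D_i)$ and split into two regimes. For $i < n^{.3}$, monotonicity of $A_\cdot$, $D_\cdot$ together with (c) applied at $(n^{.3}, 0)$ gives $A_i \leq A_{n^{.3}} < 2n^{.3} + .1 n^{.18}$ and similarly for $D_i$, so $A_i + D_i < 6 n^{.3} < 2n^{.6}$; condition (b) applied between $A_i + D_i$ and $0$ then yields $y_i < .5 n^{.4}$. For $n^{.3} \leq i < n^{.6}$, the separation $|i - 0| \geq n^{.3}$ permits direct use of (c), (d) with $j = 0$: $|A_i - 2i|, |D_i - 2i| < .1 i^{.6}$, and hence $|y_i| \leq .2 i^{.6} < n^{.4}$. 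The right tail $i > m - n^{.6}$ is handled by the symmetric argument using $j = m$ and the anchors $A_m = D_m = n$, $\gamma(2n) = 0$.

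For the single-run bound $a_i = A_i - A_{i-1} < n^{.18}$ (and symmetrically $d_i < n^{.18}$), note that an up-run of length $a_i$ raises $\gamma$ by $a_i$ over $a_i$ steps, and (a) forces $a_i \leq \max \gamma < .4 n^{.6} < 2 n^{.6}$, so (b) gives the preliminary bound $a_i < .5 n^{.4}$. To sharpen to the scale $n^{.18}$, I would combine (c) at shifted windows: the identities $A_{i + n^{.3}} - A_i = 2 n^{.3} + O(n^{.18})$ and $A_{i - 1 + n^{.3}} - A_{i - 1} = 2 n^{.3} + O(n^{.18})$ force $a_{i + n^{.3}} - a_i = O(n^{.18})$, which together with a natural finer-scale modulus-of-continuity estimate (proved in Appendix~\ref{appendixa} alongside (a)--(d)) yields $a_i < n^{.18}$. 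The implication $|y_i - y_{i-1}| < n^{.18}$ is then immediate, since $y_i - y_{i-1}$ equals either $a_i$ or $-d_i$ depending on whether the $i$-th elementary movement is an up-run or a down-run.

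Finally, for the consecutive-integer claim: if $\bar\D$ contained $L \geq n^{.3}$ consecutive integers $k+1, \dots, k + L$, then no $D_j$ would lie in this window, so the maximal $i$ with $D_i \leq k$ satisfies $d_{i+1} \geq L + 1 > n^{.3}$, contradicting $d_{i+1} < n^{.18}$. If $\D$ contained $L \geq n^{.3}$ consecutive integers, which necessarily take the form $D_i, D_{i+1}, \dots, D_{i+L-1}$ with $D_{i+L-1} - D_i = L - 1$, then (d) applied to the pair $(i + L - 1, i)$ (whose index separation exceeds $n^{.3}$ for $L$ large enough) gives $D_{i+L-1} - D_i \geq 2(L-1) - .1 (L-1)^{.6}$, which strictly exceeds $L - 1$ for $L \geq n^{.3}$ and $n$ large. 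The main obstacle is the passage $a_i < .5 n^{.4} \rightsquigarrow a_i < n^{.18}$: the modulus-of-continuity bound (b) as stated does not directly descend to the $n^{.18}$ scale, and the cleanest resolution is to record a single-run modulus-of-continuity estimate at this finer scale as part of the Petrov event proved in Appendix~\ref{appendixa}; the rest of the lemma is a short deduction.
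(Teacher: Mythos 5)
Your treatment of the first claim (splitting $i<n^{.3}$ from $n^{.3}\leq i<n^{.6}$, and using condition (c),(d) with $j=0$ or $j=m$ where the index gap permits) and of the final claim about $\D$ and $\bar\D$ matches what the paper intends and is fine. The genuine gap is in the middle step, and you have correctly located it but resolved it the wrong way: the bound $|A_i-A_{i-1}|<n^{.18}$ \emph{does} follow from condition (c) alone, so there is no need to enlarge the Petrov event (which would change Definition \ref{disparate} and force a re-examination of every other lemma that cites it; it would also mean you are no longer proving the stated implication from conditions (a)--(d)). The telescoping you wrote down, $(A_{i+k}-A_i)-(A_{i-1+k}-A_{i-1})=a_{i+k}-a_i$, compares two run lengths a distance $k$ apart and cannot isolate $a_i$. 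The correct combination shares the \emph{right} endpoint: with $k=\lceil n^{.3}\rceil+1$ (or the mirror image using $A_{i-k}$ when $i$ is near $m$; note $m>2.5n^{.4}$ under condition (a), so one direction is always available),
\[
a_i \;=\; \bigl(A_{i-1+k}-A_{i-1}\bigr)-\bigl(A_{i-1+k}-A_{i}\bigr)
\;\leq\; \bigl(2k+.1k^{.6}\bigr)-\bigl(2(k-1)-.1(k-1)^{.6}\bigr)
\;\leq\; 2+.2k^{.6},
\]
since both index gaps $k$ and $k-1$ are at least $n^{.3}$, and $2+.2k^{.6}\leq 2+.2(2n^{.3})^{.6}<n^{.18}$ for large $n$. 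This is exactly the ``very inefficient use of the triangle inequality'' the paper alludes to, and it closes your main obstacle without any new appendix estimate.

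Two smaller corrections. First, your reduction $a_i\leq\max\gamma<.4n^{.6}$ followed by condition (b) gives only $a_i<.5n^{.4}$, which is never used once the telescoping above is in place; you can drop it. Second, $y_i-y_{i-1}$ is not ``either $a_i$ or $-d_i$'': the index $i$ counts up-run/down-run \emph{pairs}, so $y_i-y_{i-1}=a_i-d_i$. The claimed bound still holds, but only because $a_i,d_i\geq 1$, whence $|a_i-d_i|\leq\max(a_i,d_i)-1<n^{.18}$; a blanket triangle inequality would only give $2n^{.18}$.
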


\begin{proof}
Condition \eqref{porter} implies that the first claim.
Conditions (\ref{dorsett}) and (\ref{tony}) can be combined with a very inefficient use of the triangle inequality to show next set of claims. That every interval of length $n^{.3}$ has an element of $\D$ follows from the previous claim. Finally suppose there were an interval $(j,j+k)$ with $k\geq n^{.3}$ and no element of $\bar \D$. Then there exists an $i$ such that $D_i=j$ and $D_{i+k}=j+k$. This violates condition \eqref{tony}.
\end{proof}

\begin{lemma} \label{ducks}
For any Dyck path $\gamma \in \dn$ and any $j$ such that $D_{i-1}<j<D_i$ we get the following.
If the Petrov conditions are satisfied then

$$|\tau_{\gamma}(j)-j+y_i|<7n^{.4}$$
\end{lemma}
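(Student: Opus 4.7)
The plan is to sandwich $\tau_\gamma(j)$ between two bounds, both within $O(n^{.4})$ of $j-y_i$, by applying Lemma \ref{stanford} with carefully chosen indices. Recall $y_i = A_i - D_i$, $D_i - j < d_i \leq n^{.18}$, and $y_i \leq 0.4\,n^{.6}$ by the Petrov conditions and Lemma \ref{voucher}.

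\textbf{Upper bound.} I apply Lemma \ref{stanford}(1) with $k = i - s$ for a suitable $s \geq 0$. The hypothesis $A_{i-s} - (i-s-1) > D_i - i$ rearranges to
\[
(A_i - A_{i-s}) - 2s < y_i - s + 1.
\]
When $s \geq n^{.3}$, Petrov condition (c) gives $|A_i - A_{i-s} - 2s| \leq 0.1\, s^{.6}$, so the hypothesis holds whenever $s \leq y_i - \lceil 0.1\, s^{.6}\rceil$. I take $s$ maximal subject to this (so $s = y_i - O(y_i^{.6})$ when $y_i \geq n^{.3}$, falling back to $s = 0$ if $y_i < n^{.3}$). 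In the first case Lemma \ref{stanford}(1) yields
\[
\tau_\gamma(j) < A_{i-s} = A_i - 2s + O(s^{.6}) = D_i - y_i + O(n^{.36}),
\]
which is within $d_i + O(n^{.36}) = O(n^{.4})$ of $j - y_i$. In the fallback case, $\tau_\gamma(j) < A_i \leq j + y_i + d_i$, so $\tau_\gamma(j) - (j-y_i) \leq 2y_i + d_i < 3n^{.3}$. Either way, $\tau_\gamma(j) < j - y_i + O(n^{.4})$.

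\textbf{Lower bound.} I apply Lemma \ref{stanford}(2) with $k' = i - s'$. The hypothesis $A_{k'} - (k'-1) < D_{i-1} - (i-1)$ rearranges to
\[
(A_i - A_{i-s'}) - 2s' > -(y_i + d_i) + s' - (A_i - A_{i-s'}) \cdot 0,
\]
i.e., using $D_{i-1} = D_i - d_i = A_i - y_i - d_i$, the condition becomes $(A_i - A_{i-s'}) > y_i + d_i + s' - 1$. Again via Petrov (c), the smallest valid $s'$ is $y_i + d_i + O((y_i+d_i)^{.6}) = y_i + d_i + O(n^{.36})$, giving
\[
\tau_\gamma(j) > A_{k'} + 1 = A_i - 2s' + 1 + O(s'^{.6}) = D_i - y_i - 2d_i + O(n^{.36}) \geq j - y_i - O(n^{.4}),
\]
using $d_i \leq n^{.18}$.

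\textbf{Combining.} The two bounds yield $|\tau_\gamma(j) - (j - y_i)| < 7n^{.4}$, with the implied constants absorbed into the factor $7$ after a careful accounting.

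\textbf{Main obstacle.} The delicate part is the regime where $s$ (or $s'$) is smaller than $n^{.3}$, so Petrov condition (c) is not directly applicable and only the cruder jump bound $|a_t| < n^{.18}$ from Lemma \ref{voucher} is available; summing this naively over $s \leq n^{.3}$ steps yields error $n^{.48}$, which exceeds the target $n^{.4}$. The resolution is that precisely in this regime $y_i$ itself is small ($y_i < n^{.3}$), so the desired approximation $\tau_\gamma(j) \approx j - y_i$ degenerates into $\tau_\gamma(j) \approx j$, which is furnished directly by Lemma \ref{downandout} (giving $\tau_\gamma(j) \leq j$) together with the fallback $k = i$ choice above. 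Handling indices $i$ near $1$ or $m$, where $y_i < n^{.4}$ by Lemma \ref{voucher} but $i - s$ might fall outside $\{1,\dots,m-1\}$, is done by the same small-$y_i$ argument.
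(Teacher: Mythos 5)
Your overall strategy is the same as the paper's: sandwich $\tau_\gamma(j)$ between $A_k$ for two indices $k\approx i-y_i$ via Lemma \ref{stanford}, verify the hypotheses with Petrov condition (c), and convert $A_k$ back to $j-y_i+O(n^{.4})$ with condition (d). The paper simply fixes the offsets as $k^{\pm}=i-y_i\pm\lfloor n^{.4}\rfloor$ where you optimize them to $O(y_i^{.6})$; that difference is cosmetic, and your main-case computations (both bounds when $s,s'\geq n^{.3}$ and the indices stay in range) are sound.

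The gap is in your resolution of the degenerate regime. You claim that when $s'$ (or $s$) would drop below $n^{.3}$, or when $i-s'$ falls out of range, ``$\tau_\gamma(j)\approx j$ is furnished directly by Lemma \ref{downandout} together with the fallback $k=i$ choice.'' But Lemma \ref{downandout} gives only $\tau_\gamma(j)\leq j$, and the $k=i$ choice gives only $\tau_\gamma(j)<A_i$ --- both are \emph{upper} bounds. The two-sided estimate still requires $\tau_\gamma(j)\geq j-y_i-7n^{.4}$, and when $y_i$ is small (the generic situation for $i$ in the bulk of the path) this is the nontrivial half: one must rule out $\tau_\gamma(j)\ll j$, which needs Lemma \ref{stanford}(2) with an admissible $k$, not a triviality. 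The repair is exactly the paper's second case: do not shrink $s'$ toward $y_i+d_i$; instead take $s'=4\lfloor n^{.4}\rfloor$ (anything in $[n^{.3},Cn^{.4}]$ exceeding $y_{i-1}+d_i$ by enough to absorb the $0.1\,s'^{.6}$ error works), so Petrov (c) still verifies the hypothesis of Lemma \ref{stanford}(2) while the resulting loss $2s'+O(s'^{.6})$ remains $O(n^{.4})$. Separately, when $i$ is so small that $i-s'<1$, the lower bound comes from $\tau_\gamma(j)\geq 1$ together with $j<D_i\leq 2i+0.1\,i^{.6}=O(n^{.4})$ (condition (d) plus monotonicity of $D$), not from any version of the Lemma \ref{stanford} argument. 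With these two patches your proof coincides with the paper's.
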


\begin{proof} 
We break the proof up into two cases. 
First consider the case that $y_i \geq 2n^{.4}$. 
Set $k^-=i-y_i-\lfloor n^{.4}\rfloor $ and $k^+=i-y_i+ \lfloor n^{.4} \rfloor $.
(Note that by Lemma \ref{voucher} if $y_i \geq 2n^{.4}$ then $i>n^{.6}$ and 
 $i<n-n^{.6}.$ Thus by Petrov condition \eqref{htcone} we have $0<k^-<k^+<n$.)
If the Petrov conditions are satisfied then 
\begin{equation}\label{bryant} n^{.4}\leq i-k^+<i-k^- < y_i+n^{.4}+1<.5n^{.6}.\end{equation}
Then by the Petrov condition \eqref{dorsett} and the definition of $y_i$
\begin{eqnarray}
D_i-i-A_{k^+}+k^+-1&\leq&A_i-y_i-i-A_{k^+}+k^+-1\nonumber\\
&\leq& A_i-A_{k^+}-(i-k^+)-y_i-1\nonumber\\
&\leq& (i-k^+)+(i-k^+)^{.6}-y_i-1\nonumber\\
&\leq& y_i-n^{.4}+(.5n^{.6})^{.6}-y_i-1\nonumber\\
&<&0. \label{progressive}
\end{eqnarray}

Again by \eqref{bryant}, Petrov conditions \eqref{dorsett}, Lemma \ref{voucher} and the definition of $y_{i-1}$
\begin{eqnarray}
D_{i-1}-(i-1)-A_{k^-}+k^--1
&\geq& A_{i-1}-y_{i-1}-(i-1)-A_{k^-}+k^--1\nonumber\\
&\geq& A_{i-1}-A_{k^-}-(i-1-k^-)-y_{i-1}-1\nonumber\\
&\geq& (i-1-k^-)-(i-1-k^-)^{.6}-y_{i-1}-1\nonumber\\
&\geq& y_i+n^{.4}-(.5n^{.6})^{.6}-y_{i-1}-2\nonumber\\
&\geq& y_i-y_{i-1}+.5n^{.4}\nonumber\\
&>&0. \label{geico}
\end{eqnarray}
By (\ref{progressive}), (\ref{geico}) and Lemma \ref{stanford}
$$A_{k^-}<\tau_{\gamma}(j)<A_{k^+}.$$
So by (\ref{bryant}) and Petrov condition \eqref{tony}
\begin{eqnarray*}
\tau_{\gamma}(j)-j
&>&A_{k^-}-D_i\\
&\geq& -D_i +D_{k^-}+y_{k^-}\\
&\geq& -2(y_i+n^{.4})-(i-k^-)^{.6}+y_{i}-n^{.4}\\
&\geq&-y_i-4n^{.4}.
\end{eqnarray*}
Again by (\ref{bryant}) and Petrov condition \eqref{tony}
\begin{eqnarray*}
\tau_{\gamma}(j)-j
&<&A_{k^+}-D_{i-1}\\
&\leq&D_{k^+}+y_{k^+}-D_{i-1}\\
&\leq&-2(k^+-(i-1))-y_i+n^{.4}\\
&\leq&-2y_i+2n^{.4}+(i-k^+)^{.6}-y_i+n^{.4}\\
&\leq&-y_i+4n^{.4}.
\end{eqnarray*}
Putting it all together we get
$$|\tau_{\gamma}(j)-j+y_i|<4n^{.4}.$$

Now consider the case that $y_i<2n^{.4}$. 
Notice that $D_i \leq A_i$ for all $i$. 

By Lemma \ref{downandout} we have that for all $j \not\in \D$  that $\tau_{\gamma}(j)\leq j$ and 
$$ \tau_{\gamma}(j)-j+y_i<2n^{.4}.$$ To get the lower bound we set 
$k^-=\max(0,i-4\lfloor n^{.4}\rfloor).$ 
If $i \leq 4n^{.4}$ and $k^-=0$ then by the Petrov bounds $j\leq D_i\leq 5n^{.4}$ and
$$\tau_{\gamma}(j)-j+y_i\geq 0-2n^{.4}-5n^{.4}=-7n^{.4}.$$

Finally we consider the case that $i \geq 4n^{.4}$. As $y_i\leq 2n^{.4}$ then by Lemma \ref{voucher} 
$y_{i-1}<3n^{.4}$. Again by the Petrov condition \eqref{dorsett}
\begin{eqnarray*}
D_{i-1}-(i-1)-A_{k^-}+k^--1 & \geq & A_{i-1}-y_{i-1} -(i-1)-A_{k^-} +k^- -1\\
& \geq & A_{i-1}-A_{k^-}-(i-1-k^-)+1-y_{i-1}\\
& \geq & 4n^{.4}-2 -(4n^{.4})^{.6}-y_{i-1}\\
&\geq & 0.
\end{eqnarray*}

We now finish this case exactly as we finished the case that $y_i>2n^{.4}$.
\end{proof}

\begin{lemma} \label{fathersday}
For any Dyck path $\gamma \in \dn$ that satisfies the Petrov conditions and any $j=D_i \in \D$
$$|\tau_{\gamma}(j)-j-\gamma(2j)|<10n^{.4} .$$
Also for any such $\gamma$, $j$ and $i$ with $D_{i-1}<j<D_i$
$$|\tau_{\gamma}(j)-j+\gamma(2j)|<10n^{.4} .$$
\end{lemma}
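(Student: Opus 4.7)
The plan is to reduce both inequalities to a single comparison between $\gamma(2j)$ and $y_i = \gamma(A_i + D_i)$, and then to exploit the modulus-of-continuity estimate furnished by Petrov condition \eqref{porter} together with the height bound \eqref{htcone}. The key observation is that, whether $j$ is at the bottom of the $i$th descent or strictly inside it, the position $2j$ lies within $2n^{.6}$ of the position $A_i+D_i$ where $\gamma$ attains $y_i$, so Petrov's uniform continuity estimate will immediately give $|\gamma(2j)-y_i|<\tfrac12 n^{.4}$.

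For the first assertion, where $j=D_i\in\D$, I would first apply the BJS definition directly to compute $\tau_\gamma(j)-j = 1+A_i-D_i = 1+y_i$. Then I would observe that $|2D_i-(A_i+D_i)| = |A_i-D_i| = y_i$, and by Petrov condition \eqref{htcone} we have $y_i \leq \max_x\gamma(x) < .4 n^{.6} < 2n^{.6}$. Applying \eqref{porter} to the positions $2D_i$ and $A_i+D_i$ yields $|\gamma(2D_i)-y_i|<\tfrac12 n^{.4}$, so
\[
|\tau_\gamma(j)-j-\gamma(2j)| \leq 1 + |y_i-\gamma(2D_i)| < 1 + \tfrac12 n^{.4} < 10 n^{.4}
\]
for $n$ large.

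For the second assertion, where $D_{i-1}<j<D_i$, Lemma \ref{ducks} already gives $|\tau_\gamma(j)-j+y_i|<7n^{.4}$, so the triangle inequality reduces the problem to showing $|\gamma(2j)-y_i|<3n^{.4}$. Here I would estimate the distance $A_i+D_i-2j$ from above and below: the lower bound $A_i+D_i-2j \geq A_i-D_i+2 = y_i+2\geq 2$ comes from $j\leq D_i-1$, and the upper bound $A_i+D_i-2j \leq (A_i-D_{i-1}) + d_i - 2$ comes from $j\geq D_{i-1}+1$. The peak height $A_i-D_{i-1}$ is at most $\max_x\gamma(x)<.4 n^{.6}$ by \eqref{htcone}, and $d_i=D_i-D_{i-1}<n^{.18}$ by Lemma \ref{voucher}, so $|2j-(A_i+D_i)| < .4 n^{.6} + n^{.18} < 2n^{.6}$. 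Another application of Petrov condition \eqref{porter} then yields $|\gamma(2j)-y_i|<\tfrac12 n^{.4}$, and combining with Lemma \ref{ducks} gives the bound $7n^{.4}+\tfrac12 n^{.4}<10n^{.4}$.

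There is no real obstacle here beyond bookkeeping; the work was already done in Lemma \ref{ducks} and the Petrov conditions. The only nontrivial input is recognizing the correct position at which to compare $\gamma$ to $y_i$, namely $A_i+D_i$, which is within a peak height plus one descent run of $2j$ in both sub-cases.
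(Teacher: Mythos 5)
Your proof is correct and follows essentially the same route as the paper: the first case is handled by the identical direct computation $\tau_\gamma(D_i)-D_i=1+y_i$ plus the Petrov modulus of continuity, and the second case combines Lemma \ref{ducks} with a continuity estimate relating $\gamma(2j)$ to $y_i=\gamma(A_i+D_i)$. The only (immaterial) difference is that the paper inserts the intermediate point $2D_i$ and applies the triangle inequality twice, whereas you bound $|2j-(A_i+D_i)|$ directly by a peak height plus one descent run; both yield the same conclusion.
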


\begin{proof} If $j=D_i$ then by Petrov condition \eqref{htcone}
$$0\geq 2j-(A_i+D_i)=2D_i-A_i-D_i=D_i-A_i>-n^{.6}$$
so by the Petrov condition \eqref{porter}
\begin{equation}
\label{trojans}
|\gamma(2D_i)-\gamma(A_i+D_i)|=|\gamma(2j)-\gamma(A_i+D_i)|<n^{.4}.\end{equation}
Then by \eqref{trojans}
\begin{eqnarray*}
|\tau_{\gamma}(j)-j-\gamma(2j)|
&=&|1+A_i-j-\gamma(2j)|\\
&=&|1+D_i+\gamma(A_i+D_i)-j-\gamma(2j)|\\
&=&|1+\gamma(A_i+D_i)-\gamma(2j)|\\
&\leq& 1+n^{.4}\\
&\leq & 2n^{.4}.
\end{eqnarray*}

If $D_{i-1}<j<D_i$ then by Lemma \ref{voucher}
 $$0\geq 2j-2D_i\geq 2D_{i-1}-2D_i\geq -n^{.18}$$
and by Petrov condition \eqref{porter} for large $n$
\begin{equation} \label{crazy}
|\gamma(2j)-\gamma(2D_i)|<2(6n^{.18})^{.6}<6n^{1.2}<n^{.4}
\end{equation}
By Lemma \ref{ducks}

\begin{equation}|\tau_{\gamma}(j)-j+\gamma(A_i+D_i)|=|\tau_{\gamma}(j)-j+y_i|<7n^{.4}.\label{alameda}\end{equation} 
Then by the triangle inequality, (\ref{alameda}), (\ref{trojans}) and  \eqref{crazy}
\begin{eqnarray*}
|\tau_{\gamma}(j)-j+\gamma(2j)|
&<&|\tau_{\gamma}(j)-j+\gamma(A_i+D_i)|+|-\gamma(A_i+D_i)+\gamma(2D_i)|\\
&&+|\gamma(2j)-\gamma(2D_i)|\\
&<&7n^{.4}+n^{.4}+n^{.4}\\
&=&9n^{.4}.
\end{eqnarray*}
\end{proof}

\begin{pfofthm}{\ref{cincodemayo}}
Fix $0\leq a<b \leq 1$. By Lemma \ref{voucher} both $\frac{1}{\sqrt{2n}}\D$ and $\frac{1}{\sqrt{2n}}\bar \D$ have points in  $(a,b)$. Thus the theorem follows from Lemma \ref{fathersday} and the convergence of Dyck paths to Brownian excursion.

\end{pfofthm}

\section{A Bijection between Dyck Paths and 231-avoiding permutations} 
\label{counteroffer}

  We define a particular bijection from Dyck paths of length $2n$ to {\bf 231}-avoiding permutations of size $n$ which connects geometric properties of the path to geometric properties of the graph of the permutation.  Although we would not be surprised to learn that this bijection has appeared previously in the literature, we have not found it elsewhere.  For our purposes the most important geometric aspect of a Dyck path is an excursion.
\begin{definition}
An {\bf excursion} in a Dyck Path starting at $x$ with height $h$ and length $l$ is a path interval $\gamma([x,x+l])$ such that
\begin{enumerate}
\item $\gamma(x)=\gamma(x+l)=h-1$ 
\item $\gamma(x+1)=\gamma(x+l-1)=h$ and
\item $l=\min\{j\geq1:\ \gamma(x+j)=h-1\}.$
\end{enumerate}
\end{definition}

Note that there are $n$ excursions in a Dyck Path of length $2n$ as there is one excursion that begins with every up-step.  Based on this correspondence we say the $i$th excursion, $Exc(i)$ is the one that begins with the $i$th up-step.

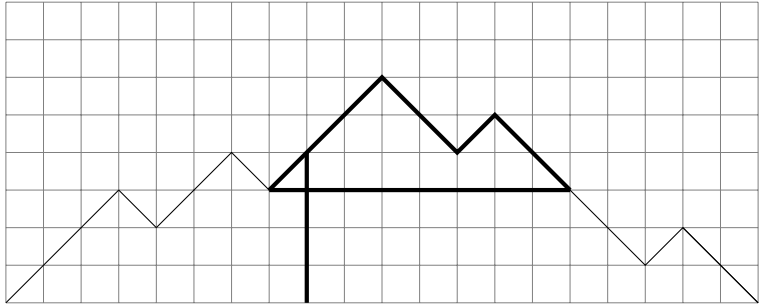
\begin{figure}
\centering

 \begin{tikzpicture} 
 \draw[step=.5, help lines] (0,0) grid (10,4);
 \draw (0,0) --(1.5,1.5);
 \draw (1.5,1.5)--(2,1);
 \draw (2,1)--(2.5,1.5)--(3,2)--(3.5,1.5) --(5,3)--(6,2)--(6.5,2.5)--(8.5,.5)--(9,1)--(10,0);
 \draw[ultra thick] (3.5,1.5) --(5,3)--(6,2)--(6.5,2.5)--(7.5,1.5);
 \draw[ultra thick] (4,0)--(4,2);
 \draw[ultra thick] (3.5,1.5) -- (7.5,1.5);

 \draw (9,1)--(10,0);
\end{tikzpicture}
\caption{A Dyck path in $\mathcal{D}_{10}$ with $v_6 = 8$, $h_6 = 4$, and $l_6 = 8$.}
 \label{fig.dyckex1}
\end{figure}

\begin{definition} \label{def.xihili}
For a Dyck path $\gamma$, define the following:

\begin{itemize}
\item $Exc(i):=$ the $i$th excursion.
\item $v_i:=$ the position after the $i$th up-step, or 1 + the start of $Exc(i).$
\item $h_i:=\gamma(v_i) = $ the height of the path after the start of $Exc(i).$
\item $l_i:=$ the length of the same excursion.  
\end{itemize}
\end{definition}

Figure \ref{fig.dyckex1} illustrates these definitions for a particular $\gamma.$

For a path $\gamma\in \dn$ we define the map $\sigma_\gamma = \sigma: [n] \to \Z$ by $$\sigma(i) = i + l_i/2  - h_i.$$

\begin{theorem} \label{bijection} For $\gamma \in \dn$ let $\sigma = \sigma_\gamma$ be defined as above.  Then $\sigma\in \sn(231).$  Moreover, $\gamma\mapsto \sigma_\gamma$ is a bijection from $\dn \to \sn(231).$
\end{theorem}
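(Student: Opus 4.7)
The plan is to recognize the geometric meaning of the formula $\sigma_\gamma(i) = i + l_i/2 - h_i$: it equals the index (among all $n$ down-steps of $\gamma$) of the down-step that terminates the $i$-th excursion. Write $U_i$ for the horizontal position of the $i$-th up-step and $D_j$ for the position of the $j$-th down-step. The $i$-th excursion ends at position $v_i - 1 + l_i$; since height equals (ups) minus (downs) and position equals (ups) plus (downs), the number of down-steps in the first $v_i - 1 + l_i$ positions is $(v_i - 1 + l_i - (h_i - 1))/2$, which simplifies to $i + l_i/2 - h_i$ upon substituting $v_i = 2i - h_i$ (the latter holding because the first $v_i$ steps consist of $i$ up-steps and $v_i - i$ down-steps, with height $2i - v_i = h_i$). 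Thus $\sigma_\gamma$ is the matching permutation of $\gamma$ viewed as a balanced parenthesization, pairing each up-step with its closing down-step.

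Given this interpretation, Step 1 is immediate: matching up-steps to their closing down-steps is a bijection on $\{1,\ldots,n\}$, so $\sigma_\gamma \in \sn$. For Step 2, I will show $\sigma_\gamma$ avoids $\mathbf{231}$. The key lemma is a nesting-versus-separation dichotomy: for indices $a<b$, because matchings in a Dyck path never cross, either the $b$-th up-step lies strictly inside the $a$-th excursion (so $U_a < U_b < D_{\sigma_\gamma(b)} < D_{\sigma_\gamma(a)}$, giving $\sigma_\gamma(b) < \sigma_\gamma(a)$), or the $a$-th excursion closes before the $b$-th up-step begins (so $D_{\sigma_\gamma(a)} < U_b$, giving $\sigma_\gamma(a) < \sigma_\gamma(b)$). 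Suppose now for contradiction that $i<j<k$ form a $\mathbf{231}$-pattern, i.e., $\sigma_\gamma(k) < \sigma_\gamma(i) < \sigma_\gamma(j)$. Then $\sigma_\gamma(i) < \sigma_\gamma(j)$ forces $D_{\sigma_\gamma(i)} < U_j$, while $\sigma_\gamma(k) < \sigma_\gamma(i)$ forces $U_k < D_{\sigma_\gamma(i)}$. Combined with $U_j < U_k$, this yields $U_k < D_{\sigma_\gamma(i)} < U_j < U_k$, a contradiction.

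For Step 3 (bijectivity), since $|\dn| = C_n = |\sn(\mathbf{231})|$ it suffices to show injectivity, and I will do so by recursively recovering $\gamma$ from $\sigma = \sigma_\gamma$. Because $h_1 = 1$, the formula gives $\sigma(1) = l_1/2$, so the first primitive excursion of $\gamma$ has length $2\sigma(1)$ and contains up-step indices $1,\ldots,\sigma(1)$ and down-step indices $1,\ldots,\sigma(1)$; stripping off the outer $U$ and terminal $D$ of this excursion produces a Dyck path of length $2\sigma(1) - 2$ whose matching is the restriction of $\sigma$ to $\{2,\ldots,\sigma(1)\}$ (relabeled to start at $1$), while the remainder of $\gamma$ past this excursion is a Dyck path whose matching is $\sigma$ restricted to $\{\sigma(1)+1,\ldots,n\}$ (shifted by $\sigma(1)$). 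Induction on $n$ finishes the reconstruction and hence injectivity.

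The main obstacle is the clean verification of the nesting-versus-separation dichotomy in Step 2, which uses the non-crossing property of the matching intrinsic to Dyck paths; once this structural fact is in hand, both the 231-avoidance and the recursive inverse in Step 3 fall out directly.
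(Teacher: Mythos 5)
Your proof is correct, and while its skeleton matches the paper's (a nested-or-disjoint dichotomy for excursions, \textbf{231}-avoidance deduced from that dichotomy, and injectivity plus the Catalan count $|\dn|=|\sn(231)|=C_n$ for bijectivity), the way you justify each step is genuinely different. The paper never identifies $\sigma_\gamma$ as the matching permutation of the balanced parenthesization; instead it verifies $\sigma_\gamma(j)<\sigma_\gamma(i)$ in the nested case and $\sigma_\gamma(i)<\sigma_\gamma(j)$ in the disjoint case by direct inequality manipulation with $h_i$ and $l_i$ (e.g.\ $\sigma(j)-\sigma(i)=j-i+l_j/2-l_i/2-(h_j-h_i)\le h_i-h_j<0$ when $Exc(j)\subset Exc(i)$), and it proves injectivity by observing that local minima of $\sigma_\gamma(i)-i$ occur exactly when $l_i=2$ and hence correspond to local maxima of $\gamma$, which determine the path. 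Your identification of $\sigma_\gamma(i)$ as the index of the down-step closing $Exc(i)$ --- which you correctly verify from $v_i=2i-h_i$ --- makes the permutation property and the dichotomy immediate consequences of the non-crossing of the matching, and your injectivity argument is the standard first-return decomposition driven by $\sigma(1)=l_1/2$. The trade-off: the paper's computations are self-contained and need no auxiliary interpretation, while your route is more conceptual, explains why the formula works, and packages the inverse map more transparently; both are complete proofs.
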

We break the proof of this theorem up into parts.

\begin{lemma}
For any Dyck path $\gamma \in \dn$,  $\sigma_\gamma:[n] \to [n].$  
\end{lemma}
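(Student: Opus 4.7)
The plan is to directly verify the two inequalities $1 \leq \sigma(i) \leq n$ from the definition $\sigma(i) = i + l_i/2 - h_i$, after first noting that the expression is an integer. Since the $i$th excursion starts and ends at the same height $h_i - 1$, its length $l_i$ contains equal numbers of up- and down-steps, so $l_i$ is even and $l_i/2$ is a nonnegative integer; combined with $l_i \geq 2$ (the excursion contains at least the up-step at $v_i$ and a matching down-step) we have $l_i/2 \geq 1$.

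For the lower bound, I would compute the height $h_i = \gamma(v_i)$ from the step counts. The path makes exactly $i$ up-steps and $v_i - i$ down-steps in its first $v_i$ positions, so $h_i = i - (v_i - i) = 2i - v_i$. Since heights are non-negative, $v_i \geq h_i$, which combined with $h_i = 2i - v_i$ gives $i \geq h_i$. Therefore
\[
\sigma(i) \;=\; i + l_i/2 - h_i \;\geq\; l_i/2 \;\geq\; 1.
\]

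For the upper bound, the key step is to count the up-steps contained in the $i$th excursion. Since $Exc(i)$ is a Dyck subpath of length $l_i$ starting with an up-step (the $i$th up-step of $\gamma$), it contains exactly $l_i/2$ up-steps, namely the $i$th through the $(i + l_i/2 - 1)$th up-steps of $\gamma$. As $\gamma$ has only $n$ up-steps in total, we get $i + l_i/2 - 1 \leq n$, and then
\[
\sigma(i) \;=\; i + l_i/2 - h_i \;\leq\; (n+1) - h_i \;\leq\; n,
\]
using $h_i \geq 1$.

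There is no real obstacle here; the argument is purely a consequence of the definition together with the standard relations between position, height, and up-step count in a Dyck path. The only care needed is to correctly identify that the excursion length $l_i$ is even and that exactly $l_i/2$ of the $n$ up-steps of $\gamma$ lie inside $Exc(i)$. The fact that $\sigma$ actually lands in $[n]$ (as opposed to $\mathbb{Z}$) is then immediate from the two inequalities.
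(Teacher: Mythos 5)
Your proof is correct and follows essentially the same route as the paper's: bounding $\sigma(i)$ below via $h_i \leq i$ and $l_i/2 \geq 1$, and above via the count of up-steps in $Exc(i)$ together with $h_i \geq 1$. The only quibble is the phrase ``since heights are non-negative, $v_i \geq h_i$'' --- the cleaner justification is that the number of down-steps $v_i - i$ among the first $v_i$ steps is non-negative (equivalently, $\gamma(x) \leq x$ for a path started at $0$), but the inequality itself and the rest of the argument are fine.
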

\begin{proof}
Let $u_i$ and $d_i$ denote the number of up-steps and the number of down-steps, respectively, up until the beginning of the $i$th excursion and let $v_i=u_i+d_i$ denote the total number of steps until the beginning of the same excursion.  Each up-step is the beginning of an excursion so $u_i = i.$  Moreover $d_i$ is determined by $h_i$ since $h_i = u_i - d_i.$  For any path $\gamma\in\dn,$ $\gamma(x)\leq x.$  Therefore $1\leq h_i = \gamma(v_i) \leq i,$ hence $0\leq i - h_i.$  Moreover, $l_i/2$ counts the number of up-steps in the excursion.  Only $n-i$ up-steps remain after the first $i$ have occured so $l_i/2 -1 + i \leq n$.  Combining these inequalities gives: $$1\leq i-h_i +1  \leq \sigma(i) \leq i +l_i/2 - 1 \leq n.$$  Hence $\sigma$ maps $[n]$ into $[n]$. 
\end{proof}

\begin{lemma}
For any Dyck path $\gamma$ and any $i<j$ either 
$$Exc(j)\subset Exc(i) \ \ \ \text{ or } \  \ \ Exc(i) \cap Exc(j) = \emptyset.$$
\end{lemma}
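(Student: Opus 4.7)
The plan is to show that the excursion intervals form a laminar family by direct analysis of heights, using only the definition and the fact that a Dyck path changes by $\pm 1$ at each step.

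First I would introduce notation. Let $x_i := v_i - 1$ denote the starting position of $Exc(i)$, so that $Exc(i)$ corresponds to the interval $[x_i, x_i + l_i]$ with the characterizing properties $\gamma(x_i) = \gamma(x_i + l_i) = h_i - 1$ and $\gamma(t) \geq h_i$ for every $t \in (x_i, x_i + l_i)$ (in fact $x_i + l_i$ is the minimal time after $x_i$ at which the path returns to height $h_i - 1$). Because $i < j$ means the $i$th up-step occurs before the $j$th up-step, we always have $x_i < x_j$. I would then split into two cases depending on the position of $x_j$ relative to $x_i + l_i$.

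Case A: $x_j \geq x_i + l_i$. Then $Exc(j)$ begins at or after $Exc(i)$ ends, so the two intervals are disjoint (up to possibly sharing the single endpoint $x_i + l_i$, which lies on the base line of both). This is the second alternative of the lemma.

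Case B: $x_i < x_j < x_i + l_i$. The characterizing property of $Exc(i)$ then forces $\gamma(x_j) \geq h_i$, i.e., $h_j - 1 \geq h_i$, and in particular $h_j - 1 > h_i - 1$. To reach the value $h_i - 1$ at time $x_i + l_i$ starting from the value $h_j - 1$ at time $x_j$, the path, which changes by $\pm 1$ at each step, must pass through every intermediate integer height, and in particular must return to height $h_j - 1$ at some time in $(x_j, x_i + l_i]$. By definition, the first such time is exactly $x_j + l_j$, giving $x_j + l_j \leq x_i + l_i$ and hence $Exc(j) \subseteq Exc(i)$.

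There is no real obstacle here; the argument is essentially the statement that up/down-steps of a Dyck path correspond to a well-matched system of parentheses, and the only minor subtlety is the cosmetic question of whether excursions that touch at a single base-line point should be declared disjoint or not. The substantive content is the intermediate-value step in Case B, which forces nesting whenever the starting point of $Exc(j)$ sits strictly above the floor of $Exc(i)$.
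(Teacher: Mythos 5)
Your proof is correct, and it supplies in full the argument that the paper compresses into the single line ``This follows from the definition of an excursion'': the case split on whether $x_j$ lies in $[x_i+l_i,\infty)$ or in $(x_i,x_i+l_i)$, together with the discrete intermediate-value step forcing $x_j+l_j\leq x_i+l_i$ in the nested case, is exactly the reasoning the definition is meant to yield. Your remark about the two intervals possibly sharing the base-line point $x_i+l_i$ is also apt; the paper is silently treating excursions as disjoint in that situation (equivalently, as disjoint sets of steps), and nothing downstream depends on the distinction.
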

\begin{proof}
This follows from the definition of an excursion.
\end{proof}
\begin{lemma}
For any Dyck path $\gamma$ and any $i<j$  if
$$Exc(j)\subset Exc(i) \ \ \ \text{ then } \  \ \  \sigma_{\gamma}(j)<\sigma_{\gamma}(i)$$ and if
$$Exc(j)\cap Exc(i) =\emptyset \ \ \ \text{ then } \  \ \  \sigma_{\gamma}(i)<\sigma_{\gamma}(j).$$
\end{lemma}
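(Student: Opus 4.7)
The plan is to reinterpret $\sigma_\gamma(i)=i+l_i/2-h_i$ as a count of down-steps in $\gamma$. Specifically, I will show that $\sigma_\gamma(i)$ equals the number of down-steps of $\gamma$ performed through position $v_i-1+l_i$, i.e., through the last step of $Exc(i)$. The verification is a direct computation: at position $v_i-1+l_i$ the path is at height $h_i-1$, and by that position it has taken $(i-1)+l_i/2$ up-steps (the $i-1$ up-steps preceding the start of $Exc(i)$ together with the $l_i/2$ up-steps inside $Exc(i)$), so the number of down-steps equals up-steps minus height, which is $(i-1+l_i/2)-(h_i-1)=\sigma_\gamma(i)$.

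Granting this interpretation, both assertions become immediate comparisons of down-step counts at the right endpoints of the two excursions. For the first claim, suppose $Exc(j)\subset Exc(i)$ and $i<j$. Since $Exc(j)$ must start in the strict interior of $Exc(i)$, where $\gamma\geq h_i$ by the definition of an excursion, we get $h_j-1\geq h_i$, hence $h_j\geq h_i+1$. The endpoint of $Exc(j)$ lies strictly before the endpoint of $Exc(i)$, and between these two positions the path drops from height $h_j-1\geq h_i$ down to height $h_i-1$. This net descent of at least one unit forces at least one additional down-step, so $\sigma_\gamma(i)>\sigma_\gamma(j)$.

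For the second claim, suppose $Exc(i)\cap Exc(j)=\emptyset$ and $i<j$. Then $Exc(j)$ lies entirely after $Exc(i)$, so the endpoint of $Exc(j)$ lies strictly past the endpoint of $Exc(i)$. Since $Exc(j)$ itself already contributes $l_j/2\geq 1$ down-steps performed past the endpoint of $Exc(i)$, we conclude $\sigma_\gamma(j)\geq\sigma_\gamma(i)+l_j/2>\sigma_\gamma(i)$. There is no real obstacle here: the only non-routine step is the reinterpretation of $\sigma_\gamma(i)$ as a down-step count, after which each case reduces to a one-line comparison.
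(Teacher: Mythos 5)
Your proof is correct, and it takes a genuinely different route from the paper's. You first establish the combinatorial identity that $\sigma_\gamma(i)$ equals the number of down-steps of $\gamma$ up to and including the final step of $Exc(i)$ (your verification of this via ``down-steps $=$ up-steps $-$ height'' at the endpoint $v_i-1+l_i$ is right), and then both cases reduce to the monotonicity of the down-step count: in the nested case the endpoint of $Exc(j)$ precedes that of $Exc(i)$ and the forced net descent from $h_j-1\geq h_i$ to $h_i-1$ supplies at least one extra down-step; in the disjoint case the $l_j/2\geq 1$ down-steps of $Exc(j)$ all occur after the end of $Exc(i)$. The paper instead works directly with the algebraic expression $\sigma(j)-\sigma(i)=j-i+l_j/2-l_i/2-(h_j-h_i)$ and, in each case separately, bounds this combination using inequalities among $j-i$, $l_i$, $l_j$, $h_i$, $h_j$ that follow from the nesting or disjointness of the excursions. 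Your interpretation is arguably more conceptual: it explains \emph{why} the formula defines an injection (distinct excursions terminate with distinct down-steps, so they receive distinct down-step counts) and turns each case into a one-line observation, whereas the paper's term-by-term bounding obscures this structure; the price is the small up-front verification of the reinterpretation, which you carried out correctly.
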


\begin{proof}
Let $1\leq i < j \leq n.$  By the previous lemma the $j$th excursion begins either before or after the $i$th excursion ends.  In other words $j-i < l_i/2$ or $j-i \geq l_i/2.$  \\

We first consider when $j-i < l_i/2$.  For $j$ in this region we have $h_j > h_i$ and $l_j<l_i.$    Moreover $l_j/2-l_i/2 < i-j.$  Therefore 

\beqlbl  \label{insideexcursion}
\sigma(j)-\sigma(i) = j-i + l_j/2 - l_i/2 - (h_j - h_i) \leq h_i - h_j <0.
\eeqlbl\\

\begin{figure}[h]
\centering
 \begin{tikzpicture} 
 \draw[step=.25, help lines] (0,0) grid (10,4);
 \draw (0,0) --(1,1)--(1.25,.75)--(2,1.5)--(2.5,1)--(2.75,1.25)--(3,1.5)--(3.5,2)--(3.75,1.75)--(4.5,2.5)--(5,2)--(5.25,2.25)--(6.5,1)--(6.75,1.25)--(7,1)--(7.75,1.75)--(8.25,1.25)--(8.5,1.5)--(10,0);
 \draw[thick] (2.75,0)--(2.75,1.25);
\draw[thick] (2.5,1)--(6.5,1);
 \draw[thick] (4,0)--(4,2);
 \draw[thick] (3.75,1.75) --(5.75,1.75);
 \draw (2.5,.5) node {$h_i$};
 \draw (5,.65) node {$l_i$};
 \draw(3.75,.8) node {$h_j$};
 \draw (4.9,1.45) node {$l_j$};
 
\end{tikzpicture}
\caption{The $j$th excursion occurs during the $i$th excursion}
\end{figure}

Now we consider when $j-i\geq l_i/2.$  Since the path must return below $h_i$ at the end of $Exc(i)$ then it needs at least $\max(0,h_j-h_i)$ up-steps after the the $i$th excursion ends to be at height $h_j$.  Therefore $j-i\geq l_i/2  + \max(0,h_j-h_i).$  This gives

\beqlbl \label{afterexcursion}
\sigma(j)-\sigma(i) = j-i + l_j/2-l_i/2 - (h_j-  h_i) \geq 1+ l_j/2 \geq 1.
\eeqlbl \\

\begin{figure}[h]
\centering
 \begin{tikzpicture} 
 \draw[step=.25, help lines] (0,0) grid (10,4);
 \draw (0,0) --(1,1)--(1.25,.75)--(2,1.5)--(2.5,1)--(2.75,1.25)--(3,1.5)--(3.5,2)--(3.75,1.75)--(4.5,2.5)--(5,2)--(5.25,2.25)--(6.5,1)--(6.75,1.25)--(7,1)--(7.75,1.75)--(8.25,1.25)--(8.5,1.5)--(10,0);
 \draw[thick] (2.75,0)--(2.75,1.25);
 \draw[thick] (2.5,1)--(6.5,1);
 \draw[thick] (7.25,0)--(7.25,1.25);
 \draw[thick] (7,1) --(9,1);
 \draw (2.5,.5) node {$h_i$};
 \draw (5,.65) node {$l_i$};
 \draw(7,.5) node {$h_j$};
 \draw (8,.6) node {$l_j$};
 
\end{tikzpicture}
\caption{The $j$th excursion occurs after the $i$th excursion}
\end{figure}

In either case $\sigma(j)-\sigma(i) \neq 0$ so $\sigma$ is in fact a bijection from $[n]$ to $[n].$ 
\end{proof}

Now we show that it is {\bf 231}-avoiding.

\begin{proof}[Proof of Theorem \ref{bijection}]
If $\sigma\notin \sn(231),$ then there exists $i<j<k$ such that $\sigma(k)<\sigma(i)<\sigma(j)$.  Note that $\sigma(k)<\sigma(i)$ implies the $k$th up-step occurs before the end of the $i$th excursion.  By Equations \ref{insideexcursion} and \ref{afterexcursion} the $k$th up-step occurs before the end of the $i$th excursion.  Therefore the $j$th up-step also occurs before the end of the $i$th excursion which implies $\sigma(j)<\sigma(i)$ so $\sigma$ must be {\bf 231}-avoiding.

All that remains is to show that $\sigma_\gamma \neq \sigma_{\gamma'}$ if $\gamma\neq\gamma'.$  The quantity $\sigma_\gamma(i)-i$ attains a local minimum exactly when $l_i(\gamma) = 2$ and $\sigma(i)-i = -h_i +1.$  But $l_i=2$ implies that $2i-h_i$ is a local maximum of the Dyck path.  Hence there is one-to-one correspondence with local minima of $\sigma_\gamma(i) - i$ and local maxima of $\gamma$.   A Dyck path is uniquely defined by the height and location of the local maxima.  Hence the map from $\sigma_\gamma\to\gamma$ is well-defined.  Therefore the map from $\dn$ to $\sn(231)$ given by $(\gamma \to \sigma_\gamma)$ is a bijection.
\end{proof}

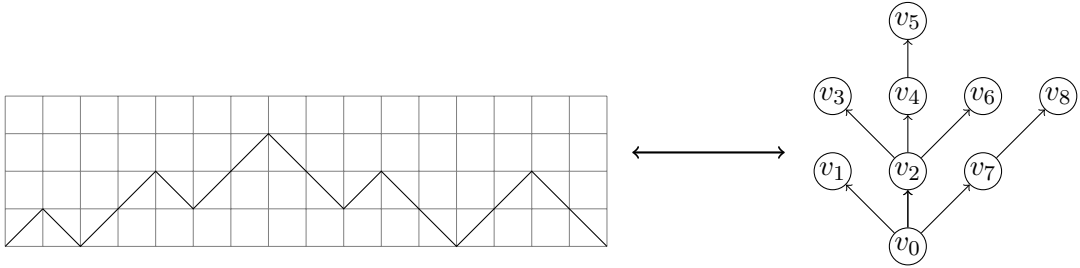
\begin{figure}[h]
\centering
 \begin{tikzpicture} 
 \draw[step=.5, help lines] (0,0) grid (8,2);
 \draw (0,0) --(.5,.5);
 \draw (.5,.5)--(1,0)--(2,1)--(2.5,.5)--(3.5,1.5)--(4.5,.5)--(5,1)--(6,0)--(7,1)--(8,0);

\Vertexn(a) at (8.2,1.25) {};
\Vertexn(b) at (10.5,1.25) {};
 
 \Vertex(r) at (12,0) {$v_0$};
\Vertex (1) at (11,1) {$v_1$};

\Vertex (2) at (12,1) {$v_{2}$};

\Vertex (3) at (13,1) {$v_{7}$};

\Vertex (5) at (11,2) {$v_3$};
\Vertex (6) at (12,2)  {$v_4$};
\Vertex (7) at (13,2) {$v_{6}$};
\Vertex (8) at (14,2) {$v_{8}$};
\Vertex (10) at (12,3) {$v_5$};

\path[every edge, ->] (r) edge (2) (r) edge (1) (r) edge (2) (r) edge (3) (2) edge (5) (2) edge (6) (2) edge (7) (3) edge (8)  (6) edge (10) ;
\path[thick, every edge, <->] (a) edge (b);

\end{tikzpicture}
\caption{A Dyck path and the rooted ordered tree for which it is the contour process.  The root is $v_0$ and the vertices are labeled in order of appearance on the depth-first walk of the tree.}
\label{fig.dycktree}
\end{figure}

\subsection{Connection to rooted ordered tree}

We remark that this bijection can be interpreted in terms of rooted ordered trees by considering a Dyck path of length $2n$ as the contour process of a rooted ordered tree with $n+1$ vertices, as described in Figure \ref{fig.dycktree}.  Given a Dyck path $\gamma$, we denote by $\mathbf{t}^\gamma$ the corresponding rooted ordered tree.  Formally, this bijection is constructed as follows.  Given a tree rooted ordered tree $\mathbf{t}$ with $n+1$ vertices, the depth-first walk of $\mathbf{t}$ is the function $f_{\mathbf{t}} : \{0,1,\dots, 2n\}\to \mathbf{t}$ defined by $f_\bt(0) = \mathrm{root}$ and given $f_\bt(i)=v$, $f_\bt(i+1)$ is the left most child of $v$ that has not already been visited if such a child of $v$ exists, and the parent of $v$ otherwise.  The Dyck path $\gamma$ corresponding to $\bt$ is defined by $\gamma(i) = d(\mathrm{root}, f_\bt(i))$, where the distance between two vertices is the number of edges on the path between them.

As in Figure \ref{fig.dycktree}, we always consider the vertices of a rooted ordered tree to be labeled in their order of appearance on the depth-first walk of the tree with the root labeled $v_0$.  For vertices $v,w\in \bt$, we say $v$ is an ancestor of $w$ (or $w$ is a descendant of $v$) if $v$ is on the path from $w$ to the root of $\bt$. let $\bt_v$ be the fringe subtree of $\bt$ rooted at $v$.  That is, $\bt_v$ is the rooted ordered tree comprised of the vertices $w\in \bt$ such that $v$ is on the path from $w$ to the root.  

This ancestral relationship induces a partial order $\preceq $ on the vertices of $\bt$, which we define by $w \preceq v$ if $v$ is an ancestor of $w$ (we consider $v$ to be an ancestor of itself).  Note that under this order $v$ is always the largest element of $\bt_v$.   If $\gamma$ is a Dyck path, the relative order structure of $\sigma_\gamma$ is completely determined by the order structure on $\bt^\gamma$ in the sense that, if $i<j$ then $\sigma_\gamma(j)< \sigma_\gamma(i)$ if and only if $v_j \preceq v_i$.  Moreover, we can easily express $\sigma_\gamma$ in terms of $\bt^\gamma$ using the formula 
\begin{equation}\label{eq tree bijection} \sigma_\gamma(i) = i + |\bt^\gamma_{v_i}| - \mathrm{ht}_\gamma(v_i), \quad i=1,2,\dots, |\bt^\gamma|,\end{equation}
where $\mathrm{ht}_\gamma(v_i)$ is the height of $v_i \in \bt^\gamma$ (i.e. the number of edges on the path from $v_i$ to the root), and $|\bt|$ is the number of vertices of $\bt$.  Many quantities related to the order structure of $\sigma_\gamma$ can easily be read off of this tree representation.  For example, recall that the path length of a tree is defined by
\[ \textrm{PathLength}(\bt^\gamma) = \sum_{i=1}^{|\bt|} \mathrm{ht}_\gamma(v_j).\]
We can obtain a formula for the number of inversions in $\sigma_\gamma$ in terms of the path length of $\bt^{\gamma}$ as follows:
\[ \#\{ (i,j) : i<j \textrm{ and } \sigma_\gamma(j)<\sigma_\gamma(i)\} = \sum_{j=1}^{|\bt^\gamma|-1} (\mathrm{ht}_\gamma(v_j)-1) =  \textrm{PathLength}(\bt^\gamma) - |\bt^\gamma|+1,\]
since for fixed $j$, the number of $i<j$ such that $\sigma_{\gamma}(j)<\sigma_\gamma(i)$ is equal to the number of vertices on the path from $v_j$ to the root $v_0$, excluding $v_j$ and $v_0$, which is precisely $\mathrm{ht}_{\gamma}(v_j)-1$.  If $\Gamma^n$ is a uniformly random Dyck path of length $2n$, then $\bt^{\Gamma^n}$ is a uniformly random rooted ordered tree with $n+1$ vertices.  It is a well known result of Tak{\'a}cs \cite{Tak91} (and is also an immediate consequence of Proposition \ref{proposition height limit} below) that 
\[ \lim_{n\to\infty}  \frac{\textrm{PathLength}\left(\bt^{\Gamma^n}\right)}{n^{3/2}} =_d \sqrt{2} \int_0^1\mathbbm{e}_t dt,\]
where $(\mathbbm{e}_t,0\leq t\leq 1)$ is Brownian excursion and, consequently,
\[ \lim_{n\to\infty}  \frac{ \#\{ (i,j) : i<j \textrm{ and } \sigma_{\Gamma^n}(j)<\sigma_{\Gamma^n}(i)\} }{n^{3/2}} =_d \sqrt{2} \int_0^1\mathbbm{e}_t dt.\]
This limiting distribution was first observed in \cite{Ja14}, and the computation above fully explains the connection between the asymptotic number of decreases in a $\mathbf{231}$-avoiding permutation and the asymptotic path length of conditioned Galton-Watson trees (since a uniform rooted ordered tree with $n$ vertices is a conditioned Galton-Watson tree) observed in \cite{Ja14}.  Because of how naturally the bijection above relates the order structure of a permutation to the contour process of a ordered tree, we suspect that it can be used to give alternative, possibly simpler, proofs of a number of results in \cite{Ja14}.  We do not pursue this here because it seems to be a relatively straightforward matter of translating the ideas in \cite{Ja14} through the bijection above instead of using the more classical bijection with binary trees used in \cite{Ja14}. 

Another easy consequence of this bijection is the following.
Let $$M^n(\gamma)=\max_{i \in \{0,1,...\,2n\}} \gamma(i)$$ and
let $$m^n(\sigma_\gamma)=\max_{i \in \{0,1,...\,2n\}} i-\sigma_\gamma(i).$$
As the maximum of a Dyck path occurs in an excursion of length 1 so
$$M^n(\gamma)=1+m^n(\sigma_\gamma).$$ This gives us the following.
\begin{lemma}
The distribution of $M^n$ is the same as the distribution of $1+m^n$.
\end{lemma}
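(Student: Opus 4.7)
The plan is to prove a stronger, deterministic statement: for every Dyck path $\gamma \in \dn$ we have the pointwise identity $M^n(\gamma) = 1 + m^n(\sigma_\gamma)$. Once this is established, the claim about distributions is immediate, because Theorem \ref{bijection} says $\gamma \mapsto \sigma_\gamma$ is a bijection between $\dn$ and $\sn(231)$, so pushing forward the uniform measure on either side gives the uniform measure on the other.

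To prove the pointwise equality, I would rewrite the quantity of interest using the defining formula $\sigma_\gamma(i) = i + l_i/2 - h_i$, which gives $i - \sigma_\gamma(i) = h_i - l_i/2$ for each $i \in \{1,\dots,n\}$. Hence
\[ m^n(\sigma_\gamma) = \max_{1 \le i \le n} \bigl( h_i - l_i/2 \bigr). \]
Now I would establish two inequalities. For the lower bound $M^n(\gamma) \le 1 + m^n(\sigma_\gamma)$, I would locate an index $i^\star$ at which $\gamma$ attains its maximum: such an index corresponds to a local peak, which is precisely an excursion of length $l_{i^\star} = 2$, with $h_{i^\star} = M^n(\gamma)$. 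Then $i^\star - \sigma_\gamma(i^\star) = h_{i^\star} - 1 = M^n(\gamma) - 1$, so $m^n(\sigma_\gamma) \ge M^n(\gamma) - 1$. For the upper bound $M^n(\gamma) \ge 1 + m^n(\sigma_\gamma)$, I would observe that for any excursion $Exc(i)$, the maximum height attained along that excursion is some $H_i \ge h_i$, and $H_i \le M^n(\gamma)$. Since $l_i \ge 2$ always, we get
\[ h_i - l_i/2 \;\le\; h_i - 1 \;\le\; H_i - 1 \;\le\; M^n(\gamma) - 1, \]
and taking the max over $i$ yields $m^n(\sigma_\gamma) \le M^n(\gamma) - 1$.

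Combining the two inequalities gives $M^n(\gamma) = 1 + m^n(\sigma_\gamma)$ for every $\gamma$, and then equality in distribution follows at once. There is no real obstacle: the whole argument is just unpacking the formula for $\sigma_\gamma(i)$ together with the elementary fact that maxima of a Dyck path occur at length-$2$ excursions (peaks). The only place one has to be momentarily careful is to note that the two quantities $h_i - l_i/2$ and $H_i - 1$ coincide exactly when $Exc(i)$ is itself a peak, which is what makes the two bounds match.
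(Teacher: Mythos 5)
Your proposal is correct and matches the paper's argument: the paper likewise deduces the distributional identity from the pointwise equality $M^n(\gamma)=1+m^n(\sigma_\gamma)$, justified by the observation that the maximum of a Dyck path is attained at a peak, i.e.\ an excursion with $l_i=2$, together with the bijection $\gamma\mapsto\sigma_\gamma$. You simply spell out the two inequalities that the paper leaves implicit.
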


\section{Invariance principles for 231-avoiding permutations}
Interpreting $\gamma \in \mathrm{Dyck}^{2n}$ as the contour process of a rooted ordered tree $\bt^\gamma$ with $n+1$ vertices as above, we see that the number of excursions of $\gamma$ of length $2k$ is equal to the number of proper fringe subtrees of $\bt$ with $k$ vertices.  Furthermore from the definition of $\sigma_\gamma$  we have that
\[ i-\sigma_\gamma(i) = \mathrm{ht}_\gamma(v_i)-|\bt^\gamma_{v_i}|.\]
The next proposition shows that $( \mathrm{ht}_\gamma(v_i))_{i=0}^n$ is typically close to $\gamma$.

\begin{proposition}\label{proposition height limit}
Let $\Gamma^n$ be a uniformly random Dyck path of length $2n$.  For every $\epsilon>0$ we have
\[\lim_{n\to\infty} \P\left( \max_{0\leq i\leq n} |\Gamma^n_{2i} -   \mathrm{ht}_{\Gamma^n}(v_i)| >\epsilon \sqrt{n}\right)=0.\]   
\end{proposition}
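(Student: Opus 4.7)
The plan is to reduce the statement to the modulus of continuity of the Dyck path and then invoke the Petrov conditions of Definition \ref{disparate}, which hold with high probability by Lemma \ref{petrov}.

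First, I would observe the following geometric identity between the tree and its contour process. If $v_i\in \bt^\gamma$ is the $i$-th vertex in depth-first order, then, writing $u_i$ and $d_i$ for the number of up- and down-steps preceding $v_i$ in the walk, we have $u_i=i$ and $h_i := \mathrm{ht}_\gamma(v_i)=u_i-d_i = i-d_i$. Consequently the position in the Dyck path at which $v_i$ is visited equals $u_i+d_i = 2i-h_i$, and, by definition of the contour process,
\[ \mathrm{ht}_\gamma(v_i) = \gamma(2i - \mathrm{ht}_\gamma(v_i)). \]
Therefore
\[ |\Gamma^n_{2i} - \mathrm{ht}_{\Gamma^n}(v_i)| = |\Gamma^n(2i) - \Gamma^n(2i - \mathrm{ht}_{\Gamma^n}(v_i))|, \]
which reduces the problem to controlling the increments of $\Gamma^n$ over time-windows whose size is bounded by the maximum height $M^n(\Gamma^n) = \max_x \Gamma^n(x)$.

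Next I would condition on the event that the Petrov conditions hold for $\Gamma^n$; by Lemma \ref{petrov} this event has probability tending to $1$. On this event, condition (a) gives $\mathrm{ht}_{\Gamma^n}(v_i) \leq M^n(\Gamma^n) < 0.4\, n^{0.6}$ uniformly in $i$, so the time-shift $\mathrm{ht}_{\Gamma^n}(v_i)$ always satisfies $|\mathrm{ht}_{\Gamma^n}(v_i)| < 2n^{0.6}$. Condition (b) then yields
\[ \max_{0\leq i\leq n} |\Gamma^n(2i) - \Gamma^n(2i - \mathrm{ht}_{\Gamma^n}(v_i))| < 0.5\, n^{0.4}, \]
where we note that both $2i$ and $2i - \mathrm{ht}_{\Gamma^n}(v_i) = v_i \geq 0$ lie in $\{0,1,\dots,2n\}$, so condition (b) applies. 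Since $0.5\, n^{0.4} < \epsilon\sqrt{n}$ for every fixed $\epsilon>0$ and all sufficiently large $n$, the event in the statement of the proposition is contained in the complement of the Petrov event (for large $n$) and hence has probability tending to $0$.

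No step here poses any real obstacle: the identity $\mathrm{ht}(v_i)=\gamma(2i-\mathrm{ht}(v_i))$ is an immediate reading of the bijection between Dyck paths and ordered trees, and the modulus-of-continuity estimate is a direct substitution into conditions (a) and (b) of Definition \ref{disparate}, which are already available via Lemma \ref{petrov}. The only point that requires a moment of care is ensuring that the arguments of $\gamma$ stay in $\{0,\ldots,2n\}$, which follows from $v_i = 2i - h_i \geq 0$.
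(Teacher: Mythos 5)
Your proof is correct, and it takes a genuinely different route from the one in the paper. The paper proves the proposition by passing to an unconditioned simple random walk $S$ (via the identity in law with $S$ conditioned on $\eta(S)=2n+1$), controlling $\max_i|V_i-2i|$ through moderate deviations for the i.i.d.\ geometric increments $V_i-V_{i-1}$ (Corollary \ref{corollary V-shift}), combining this with a modulus-of-continuity bound for $S$ (Corollary \ref{corollary fluctuations}) in Proposition \ref{prop walk flucs}, and finally transferring the resulting exponential bound to the conditioned walk using $\P(\eta(S)=2n+1)=O(n^{-3/2})$. You instead exploit the deterministic identity $V^n_i=2i-\mathrm{ht}_{\Gamma^n}(v_i)$, so that the time-shift $|2i-V^n_i|$ is exactly $\mathrm{ht}_{\Gamma^n}(v_i)$ and is therefore bounded by the maximum of the path; this lets you dispense entirely with the separate concentration estimate for $V^n_i$ and work directly on the conditioned object using Petrov conditions (a) and (b), which Lemma \ref{petrov} (via Corollary \ref{corollary p1}) already supplies. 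There is no circularity, since those conditions are established in the appendix independently of this proposition. Your argument is arguably cleaner and more self-contained given the Petrov machinery; what the paper's version buys is a statement (Proposition \ref{prop walk flucs}) about the unconditioned walk with explicit exponents $\alpha,\beta,\nu$ that is reusable elsewhere, together with an explicit stretched-exponential rate $O(n^{3/2}e^{-Dn^{1/4}})$ for the event in question. Both yield rates far stronger than the $o(1)$ claimed in the proposition.
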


This result is well known in the field of scaling limits for random trees and, for example, is implicit in \cite{MaMo03}.  We include a proof since we are working in a very special case where the proof is simpler than in the general cases considered in the literature.  Stronger approximation results than this can be obtained, but this proposition is sufficient for our purposes.  Similar arguments can be found in \cite{MaMo03, PiRi13}.

\begin{proof}
Let $\Gamma^n = (\Gamma^n_0,\Gamma^n_1,\dots, \Gamma^{n}_{2n})$ be a uniformly random Dyck path of length $2n$.  Define $V^n = (V^n_0,\dots, V^n_n)$ by $V^n_0=0$ and for $1\leq i\leq n$, define $V^n_i = \inf\{ k> V^n_{i-1} : \Gamma^n_{k}-\Gamma^n_{k-1} =1\}$
Let $S=(S_m,m\geq 0)$ be a simple symmetric random walk on $\Z$ with $S_0=0$.  Define $V_0=0$ and for $m\geq 1$ let $V_m = \inf\{k>V_{m-1} : S_k-S_{k-1} = 1\}$.  Let $\eta(S) = \inf\{ k : S_k=-1\}$.  The bijection we are using between Dyck paths and rooted ordered trees implies that
\begin{equation}\label{eq w-cond} (\Gamma^n, (\mathrm{ht}_{\Gamma^n}(v_i))_{i=0}^n, V^n) =_d\left( \left(S_i)\right)_{i=0}^{2n}, \left(S_{V_i})\right)_{i=0}^{n}, \left(V_i\right)_{i=0}^n\right) \textrm{ given } \eta(S)=2n+1.\end{equation}
By Proposition \ref{prop walk flucs} (taking $\beta=1/2$ and $\alpha = 3/4$) there exist constants $C,D>0$ such that
\begin{equation}\label{eq v-close} \P\left( \max_{0\leq i\leq n} |S_{2i} - S_{V_i} | >\epsilon \sqrt{n}\right)\leq Ce^{-D n^{1/4}} \end{equation}
Combining \eqref{eq v-close} with the fact that $\P(\eta(S)=2n+1) = O(n^{-3/2})$ shows that 
\[\begin{split} \P\left( \max_{0\leq i\leq n} |\Gamma^n_{2i} -   \mathrm{ht}_{\Gamma^n}(v_i)| >\epsilon \sqrt{n}\right) & = P\left( \max_{0\leq i\leq n} |S_{2i} - S_{V_i} | >\epsilon \sqrt{n}\  \Big|  \eta(S)=2n+1 \right)\\
& = O(n^{3/2}e^{-D n^{1/4}}),\end{split}\]
which proves the proposition.
\end{proof}

Since Proposition \ref{proposition height limit} shoes that, appropriately rescaled, $( \mathrm{ht}_{\Gamma^n}(v_{\fl{t}}),0\leq t\leq n)$ converges to Brownian excursion, we see that $( i-\sigma_{\Gamma^n}(i) , 1\leq i\leq n)$ will be well approximated by Brownian excursion at values of $i$ such that $|\bt^{\Gamma^n}_{v_i}|$ is small.  Our next step in establishing invariance principles related to $( i-\sigma_{\Gamma^n}(i) , 1\leq i\leq n)$ is to estimate the expected number of $i$ for which $|\bt^{\Gamma^n}_{v_i}|$ is large. 

 Let $\xi_k(\bt)$ be the number of fringe subtrees of $\bt$ with $k$ vertices.  So long as $k<|\bt|$, $\xi_k(\bt)$ is the number of excursions of $\gamma$ of length $2k$.  
 \begin{lemma}\label{lemma tree counting}
 Let $\bT^n$ be a uniformly random rooted ordered tree with $n+1$ vertices.  For $k \leq n+1$ we then have
 \[ \E \xi_k(\bT^n) = \frac{C_{k-1}}{2 C_n} {{2(n+1-k)} \choose {n+1-k}} + \frac{1}{2} \cf_{\{k=n+1\}}.\]
 Moreover, there exists a function $\Delta$ such that 
 \begin{equation} \label{delt}
\Delta(n) = O(1/n)
\end{equation} as $n\to \infty$ and
 \begin{equation} \label{we are the best} \E \xi_k(\bT^n) = \frac{4^{n+1-k} C_{k-1}}{2C_n\sqrt{\pi(n+1-k)} } \left(1+\Delta(n+1-k)\right) + \frac{1}{2 } \cf_{\{k=n+1\}}.
 \end{equation}
 \end{lemma}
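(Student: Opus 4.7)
The plan is to translate the expected count of fringe subtrees of size $k$ into a counting problem for Dyck paths with a marked excursion, and then to count by a clean three-factor decomposition.

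First I would use the contour-process bijection $\bt^\gamma \leftrightarrow \gamma$ to observe that, for $1 \le k \le n$, a non-root vertex $v_i$ of $\bt^\gamma$ with $|\bt^\gamma_{v_i}|=k$ corresponds bijectively to an excursion of $\gamma$ of length $2k$. Consequently, for $k \le n$,
\[
C_n \cdot \E\xi_k(\bT^n) \;=\; \#\bigl\{(\gamma,E)\,:\, \gamma\in \dn,\ E\text{ an excursion of }\gamma\text{ of length }2k\bigr\}.
\]
The case $k=n+1$ has to be handled separately, since $\xi_{n+1}(\bt)\equiv 1$ whereas no non-root vertex has $|\bt_v|=n+1$; the stated formula packages this contribution into the indicator $\tfrac12\cf_{\{k=n+1\}}$, chosen so that the ``formal'' value of the main term at $k=n+1$ plus this correction equals $1$.

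The main combinatorial step is to give a bijection from the pair set above onto triples consisting of (i) a \emph{shape} for $E$, which after stripping its first up-step and last down-step is an arbitrary Dyck path of length $2k-2$ (hence $C_{k-1}$ choices); (ii) an \emph{outer} path $\gamma'\in\mathrm{Dyck}^{2(n-k)}$ obtained from $\gamma$ by contracting $E$ to a point (hence $C_{n-k}$ choices); and (iii) an \emph{insertion point} $x\in\{0,1,\dots,2(n-k)\}$ in $\gamma'$, recording where $E$ was glued in at base height $\gamma'(x)$ (hence $2(n-k)+1$ choices). Invertibility is clear since one recovers $\gamma$ by reinserting the excursion shape at $x$ with base height $\gamma'(x)$. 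Multiplying and dividing by $C_n$ gives the exact formula, after rewriting
\[
C_{n-k}\bigl(2(n-k)+1\bigr) \;=\; \tfrac12 (n+2-k)\,C_{n+1-k} \;=\; \tfrac12\binom{2(n+1-k)}{n+1-k}
\]
via the identity $(m+1)C_m = \binom{2m}{m}$ and the Catalan recursion.

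Finally, the asymptotic estimate \eqref{we are the best} follows by plugging the standard Stirling expansion $\binom{2m}{m} = \tfrac{4^m}{\sqrt{\pi m}}(1+O(1/m))$ at $m = n+1-k$ into the exact formula. I do not foresee any real obstacle; the only care needed is in the boundary cases (especially $k=n$, where the outer path is trivial and the count reduces to $C_{n-1}$, matching the number of Dyck paths that are themselves a single excursion, and $k=n+1$, handled by the indicator) and in verifying that the contraction/insertion map is a genuine bijection.
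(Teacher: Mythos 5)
Your argument is correct, and it reaches the exact formula by a genuinely different route from the paper. You count pairs (Dyck path, marked excursion of length $2k$) via the bijective decomposition into excursion shape ($C_{k-1}$ choices), contracted outer path ($C_{n-k}$ choices), and insertion point ($2(n-k)+1$ choices); the identity $(2m+1)C_m = \tfrac12\binom{2m+2}{m+1}$ then yields the stated expression, and your separate treatment of $k=n+1$ (where the ``formal'' main term contributes $\tfrac12$ and the indicator supplies the other $\tfrac12$) matches what the formula requires. The paper instead works on the tree side with generating functions: it observes that $\xi_k$ is a recursive additive functional, invokes the transfer lemma of Flajolet--Sedgewick to get $\Xi_k(z) = \tfrac{C_{k-1}}{2}z^k + \tfrac{C_{k-1}}{2}z^k(1-4z)^{-1/2}$, and extracts the coefficient of $z^{n+1}$ using the expansion of $(1-4z)^{-1/2}$ into central binomial coefficients (the $\tfrac{C_{k-1}}{2}z^k$ term is precisely the source of the indicator at $k=n+1$). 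Your approach is more elementary and self-contained, and it gives a transparent combinatorial meaning to the factor $\tfrac12\binom{2(n+1-k)}{n+1-k}$ as $(2(n-k)+1)C_{n-k}$; the paper's approach is more mechanical and would extend with no extra work to other recursive additive functionals of the tree. Both derive the asymptotic form \eqref{we are the best} identically, from Stirling's estimate for central binomial coefficients (and in both treatments that asymptotic form is only meaningful for $k\leq n$, since $n+1-k=0$ at $k=n+1$).
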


We remark that the asymptotic statement follows directly from the exact formula using the classical asymptotic estimate for central binomial coefficients. 
 
\begin{proof}
Let 
\[ \Xi_k(z) = \sum_{\bt} \xi_k(\bt) z^{|\bt|} \quad \textrm{and} \quad y(z) = \sum_{\bt} z^{|\bt|} = \frac{1-\sqrt{1-4z}}{2}, \]
where $|\bt|$ is the number of vertices of $\bt$ and the sums are over all finite rooted ordered trees.  The computation of $y(z)$ can be found in e.g. \cite[Section I.5.1]{FlSe09}.  If $\bt_1,\dots, \bt_r$ are the fringe subtrees of $\bt$ attached to the root, then 
$$\xi_k(\bt) = \cf\{|\bt|=k\} + \sum_{j=1}^r \xi_k(\bt_j),$$ 
so $\xi_k$ is a recursive additive functional and applying \cite[Lemma VII.1 p.457]{FlSe09} yields
\[ \Xi_k(z) =  \frac{C_{k-1}z^{k+1} y'(z)}{y(z)} = \frac{C_{k-1}}{2} z^k + \frac{C_{k-1}}{2} z^k (1-4z)^{-1/2},\]
where $C_n$ is the $n$th Catalan number.  Recall that $z\mapsto (1-4z)^{-1/2}$ is the generating function for the central binomial coefficients, that is
\[ \frac{1}{\sqrt{1-4z}} = \sum_{j=0}^\infty {{2j} \choose j} z^j,\]
and consequently
\[ \Xi_k(z) = \frac{C_{k-1}}{2} z^k + \sum_{j=k}^\infty \frac{C_{k-1}}{2} {{2(j-k)} \choose {j-k}} z^j.\]
For a power series $f(z)$, let $[z^n]f(z)$ be the coefficient of $z^n$.  The first claim of the lemma follows since
\[\E \xi_k(\bT^n)  = \frac{[z^{n+1}]\Xi_k(z)}{C_n},\]
and the second follows from standard asymptotic estimates of central binomial coefficients.
\end{proof}

For a tree $\bt$, let 
\begin{equation} \label{dubliners}
\hat\xi_k(\bt) = \sum_{j=k}^{|\bt|-1} \xi_j(\bt)
\end{equation}
be the number of proper fringe subtrees of $\bt$ with at least $k$ vertices.

\begin{theorem}\label{theorem subtree sizes}
Let $\bT^n$ be a uniformly random rooted ordered tree with $n+1$ vertices and let $k_n = \fl{cn^\alpha}$.
\begin{enumerate}
\item If $c>0$ and $0<\alpha<1$ then
\[ \lim_{n\to\infty} \frac{1}{n^{1-\alpha/2}} \E  \hat\xi_{k_n}(\bT^n) = \frac{1}{\sqrt{\pi c}}.\]
\item If $0<c<1$ and $\alpha=1$ then
\[ \lim_{n\to\infty} \frac{1}{\sqrt{n}} \E   \hat\xi_{k_n}(\bT^n) =  \sqrt{\frac{1-c}{\pi c}}. \]
\end{enumerate}
\end{theorem}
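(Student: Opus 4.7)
The plan is to evaluate $\E \hat\xi_{k_n}(\bT^n) = \sum_{j=k_n}^n \E\xi_j(\bT^n)$ directly from \eqref{we are the best} in Lemma~\ref{lemma tree counting}. Applying the Stirling asymptotic $C_m \sim 4^m/(\sqrt{\pi}\,m^{3/2})$ to both $C_{j-1}$ and $C_n$ converts the leading factor into
$$\E\xi_j(\bT^n) = \frac{n^{3/2}}{2\sqrt{\pi}\,(j-1)^{3/2}\sqrt{n+1-j}}\,\bigl(1+o(1)\bigr),$$
uniformly for $j$ with both $j-1\to\infty$ and $n+1-j\to\infty$.

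Next I would separate the sum into a bulk portion, where the asymptotic above applies uniformly, and a boundary portion from $j\in\{n-M+1,\dots,n\}$ where $n+1-j$ is small. The exact formula readily gives $\E\xi_j(\bT^n) = O(1/\sqrt{n+1-j})$ in the boundary regime, so the boundary portion is $O(\sqrt{M})$; choosing $M$ as a small power of $n$ (say $M=n^{1/3}$) makes this negligible relative to both target scales $n^{1/2}$ and $n^{1-\alpha/2}$. For the bulk portion, the substitution $j = \lfloor nt\rfloor$ converts the sum into a Riemann approximation
$$\sum_{j=k_n}^{n-M}\E\xi_j(\bT^n) = \frac{\sqrt n}{2\sqrt\pi}\int_{k_n/n}^{1-M/n}\frac{dt}{t^{3/2}\sqrt{1-t}}\,\bigl(1+o(1)\bigr),$$
whose Riemann-sum error is controlled by the variation bound $V = O((M/n)^{-1/2})$ and contributes at most $O(1/\sqrt M)$ after the $\sqrt n/(2\sqrt\pi)$ prefactor.

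For case~(2), $k_n/n\to c\in(0,1)$ and the integral converges. I would evaluate it in closed form via the substitution $u=\sqrt{(1-t)/t}$, under which $dt/(t^{3/2}\sqrt{1-t}) = -2\,du$ and therefore
$$\int_c^1 \frac{dt}{t^{3/2}\sqrt{1-t}} = 2\sqrt{(1-c)/c},$$
producing the limit $\sqrt{(1-c)/(\pi c)}$ after the prefactor multiplication. For case~(1), $k_n/n\to 0$ and the integral diverges at $0$. Splitting at a fixed small cutoff $\delta$ and using $(1-t)^{-1/2} = 1+O(\delta)$ on $[k_n/n,\delta]$, I would write
$$\int_{k_n/n}^\delta\frac{dt}{t^{3/2}\sqrt{1-t}} = \bigl(2(n/k_n)^{1/2} - 2\delta^{-1/2}\bigr)\bigl(1+O(\delta)\bigr),$$
while the tail $\int_\delta^1$ is finite of order $\delta^{-1/2}$. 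Since $n/k_n\sim c^{-1}n^{1-\alpha}$, sending $n\to\infty$ first and then $\delta\to 0$ produces $(2/\sqrt c)\,n^{(1-\alpha)/2}(1+o(1))$, and multiplying by $\sqrt n/(2\sqrt\pi)$ yields $n^{1-\alpha/2}/\sqrt{\pi c}$.

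The main obstacle is controlling the asymptotic approximation and the Riemann-sum error uniformly while two distinct windows are sensitive to auxiliary cutoffs: the window near $j=n$ (where the error $\Delta$ in \eqref{we are the best} is not small) needs the boundary-portion estimate, and in case~(1) the window near $j=k_n$ (where the integrand blows up) needs the small-$t$ splitting. All of this is essentially bookkeeping, but one must choose the cutoffs $M$ and $\delta$ in a compatible order so that every error term is genuinely negligible compared to the leading scales $\sqrt n$ or $n^{1-\alpha/2}$.
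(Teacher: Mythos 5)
Your proposal is correct and takes essentially the same route as the paper's proof: both start from Lemma \ref{lemma tree counting} together with Stirling's formula, truncate the sum near $j=n$ where the error term $\Delta(n+1-j)$ is not controlled, and compare what remains with the integral $\int x^{-3/2}(1-x)^{-1/2}\,dx$ (the paper truncates at $\fl{n^\beta}$ in case (1) and at $\fl{dn}$ with $d\uparrow 1$ in case (2), but these are organizational differences only). One point to repair in your bookkeeping: in case (1) the total variation of $t\mapsto t^{-3/2}(1-t)^{-1/2}$ on $[k_n/n,\,1-M/n]$ is of order $(n/k_n)^{3/2}$, coming from the blow-up at the left endpoint, not $O((n/M)^{1/2})$; after the prefactor this contributes a Riemann-sum error $O(n^{1-3\alpha/2})=o(n^{1-\alpha/2})$, so your conclusion is unaffected.
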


The proof of this theorem is a rather lengthy computation using Lemma \ref{lemma tree counting} and Stirling's formula.  Our generating function computations in \eqref{we are the best} and the definition of $\hat\xi$ in \eqref{dubliners} show that
\begin{equation} \label{analects} \frac{1}{n^{1-\alpha/2}}\E  \hat\xi_{k_n}(\bT^n) = \frac{4^{n+1}}{2\sqrt{\pi} n^{3/2} C_n} n^{\alpha/2}\sum_{k=\fl{cn^\alpha}}^{n} \frac{C_{k-1}}{4^k\sqrt{1-(k-1)/n}} \left(1 + \Delta\left(n+1-k\right)\right). \end{equation}  
Because $k$ can be of the same order as $n$ in \eqref{analects}, some care is needed to handle the summation and show that $\Delta$ does not impact the results.

As such, we include a sketch of the proof that shows how to handle the sublties that arise and omit the more routine aspects of the calculation.   

\begin{proof}
We first analyze the case where $c>0$ and $0<\alpha <1$.  Because $k$ can be of the same order as $n$ in \eqref{analects}, in order to use our asymptotic knowledge of $\Delta$, we need to trancate the summation.  To do this, we fix $\beta$ such that 
\begin{equation} \label{betachoice}
(2+\alpha)/3 < \beta <1.
\end{equation}  
For large enough $n$, we can break \eqref{analects} up into two parts as follows
\begin{multline}\label{equation beta-decomp}  \frac{1}{n^{1-\alpha/2}}\E  \hat\xi_{k_n}(\bT^n) = \frac{4^{n+1}}{2\sqrt{\pi} n^{3/2} C_n} n^{\alpha/2}\sum_{k=\fl{cn^\alpha}}^{\fl{n^\beta}} \frac{C_{k-1}}{4^k\sqrt{1-(k-1)/n}} \left(1 + \Delta\left(n+1-k\right)\right) \\ 
+ \frac{4^{n+1}}{2\sqrt{\pi} n^{3/2} C_n} n^{\alpha/2}\sum_{k=\fl{n^\beta}+1}^{n} \frac{C_{k-1}}{4^k\sqrt{1-(k-1)/n}} \left(1 + \Delta\left(n+1-k\right)\right).
\end{multline}
We analyze these two terms separately, but first recall that 
\begin{equation} \label{stirling}
\frac{4^n}{\sqrt{\pi} n^{3/2} C_n} \to 1
\end{equation} by Stirling's formula.  We now show that the second term in \eqref{equation beta-decomp} vanishes as $n$ goes to infinity.  It is enough to show that
\[ n^{\alpha/2}\sum_{k=\fl{n^\beta}+1}^{n} \frac{C_{k-1}}{4^k\sqrt{1-(k-1)/n}} \left(1 + \Delta\left(n+1-k\right)\right) \to 0\]
since the factor in front converges by the version of Stirling's formula in \eqref{stirling}.  Since $\Delta(n)= O(1/n)$ by \eqref{delt} there is some constant $B$ such that $|1+\Delta(n)| \leq B$ for all $n$.  Thus  
\begin{multline*} \left|  \sum_{k=\fl{n^\beta}+1}^{n} \frac{C_{k-1}}{4^k\sqrt{1-(k-1)/n}} \left(1 + \Delta\left(n+1-k\right)\right) \right| \\ \leq B\sum_{k=\fl{n^\beta}+1}^{n} \frac{C_{k-1}}{4^k\sqrt{1-(k-1)/n}}  \\
 \leq \frac{B}{n^{3\beta/2}} \sum_{k=\fl{n^\beta}+1}^{n} \frac{C_{k-1}  k^{3/2} }{4^k} \frac{1}{\sqrt{1-(k-1)/n}}.
\end{multline*}
Using the version of Stirling's formula in \eqref{stirling} again, for sufficiently large $n$ we have
\[\begin{split} \frac{B}{n^{3\beta/2}} \sum_{k=\fl{n^\beta}+1}^{n} \frac{C_{k-1}  k^{3/2} }{4^k} \frac{1}{\sqrt{1-(k-1)/n}}& \leq  \frac{B}{n^{3\beta/2}} \sum_{k=\fl{n^\beta}+1}^{n} \frac{1}{\sqrt{1-(k-1)/n}} \\
&  =  Bn^{1-\frac{3\beta}{2}}\left[ \frac{1}{n} \sum_{k=\fl{n^\beta}+1}^{n} \frac{1}{\sqrt{1-(k-1)/n}}\right] \\
& \leq   Bn^{1-\frac{3\beta}{2}} \int_0^1 \frac{1}{\sqrt{1-x}} dx \\
& = 2Bn^{1-\frac{3\beta}{2}}
\end{split}.\]
Since $(2+\alpha)/3 <\beta$  by \eqref{betachoice} we have $1+\frac{\alpha}{2} - \frac{3\beta}{2} <0$, and as a result
\begin{equation} \label{screech}
\left| n^{\alpha/2}\sum_{k=\fl{n^\beta}+1}^{n} \frac{C_{k-1}}{4^k\sqrt{1-(k-1)/n}} \left(1 + \Delta\left(n+1-k\right)\right) \right| \leq2Bn^{1+\frac{\alpha}{2}-\frac{3\beta}{2}} \to 0.\end{equation}
We now turn to the first term in \eqref{equation beta-decomp}.  Since $k\leq \fl{n^\beta}$ and $\beta<1$ we have that 
\begin{equation} \label{kelly}
n+1-k \to \infty \qquad \text{ and } \qquad 1-(k-1)/n \to 1,
\end{equation} both uniformly in $k$.  Thus, given $\epsilon >0$, for sufficiently large $n$, we have by \eqref{kelly} and \eqref{delt}
\[\begin{split}
 (1-\epsilon) n^{\alpha/2} \sum_{k=\fl{cn^\alpha}}^{\fl{n^\beta}} \frac{C_{k-1}}{4^k} & \leq n^{\alpha/2}\sum_{k=\fl{cn^\alpha}}^{\fl{n^\beta}} \frac{C_{k-1}}{4^k\sqrt{1-(k-1)/n}} \left(1 + \Delta\left(n+1-k\right)\right)\\
&\leq   (1+\epsilon) n^{\alpha/2} \sum_{k=\fl{cn^\alpha}}^{\fl{n^\beta}} \frac{C_{k-1}}{4^k}
.\end{split}\]
Applying the version of Stirling's formula in \eqref{stirling} for sufficiently large $n$, we have
\begin{equation} \label{saved by the bell} \begin{split} n^{\alpha/2}\sum_{k=\fl{cn^\alpha}}^{\fl{n^\beta}} \frac{C_{k-1}}{4^k\sqrt{1-(k-1)/n}} \left(1 + \Delta\left(n+1-k\right)\right) & \sim n^{\alpha/2}\sum_{k=\fl{cn^\alpha}}^{\fl{n^\beta}} \frac{C_{k-1}}{4^k}\\
& \sim  \frac{n^{\alpha/2}}{4\sqrt{\pi}}  \sum_{k=\fl{cn^\alpha}}^{\fl{n^\beta}} \frac{1}{k^{3/2} }\\
& \rightarrow \frac{1}{2\sqrt{\pi c}}.\end{split}\end{equation}
Combining the computations in \eqref{equation beta-decomp}, \eqref{stirling}, \eqref{screech} and \eqref{saved by the bell} we have
\[ \lim_{n\to\infty} \frac{1}{n^{1-\alpha/2}} \E  \hat\xi_{k_n}(\bT^n) = \frac{1}{\sqrt{\pi c}},\]
as desired.

We now turn to the case when $0<c<1$ and $\alpha =1$.  Fix some $c<d<1$, so that breaking \eqref{analects} up into two parts we get
\begin{multline}\label{equation decomp 2} \frac{1}{\sqrt{n}}\E \hat\xi_{k_n}(\bT^n) = \frac{4^{n+1}}{2\sqrt{\pi} n^{3/2} C_n} n^{1/2}\sum_{k=\fl{cn}}^{\fl{dn}} \frac{C_{k-1}}{4^k\sqrt{1-(k-1)/n}} \left(1 + \Delta\left(n+1-k\right)\right) \\
+ \frac{4^{n+1}}{2\sqrt{\pi} n^{3/2} C_n} n^{1/2}\sum_{k=\fl{dn}+1}^{n} \frac{C_{k-1}}{4^k\sqrt{1-(k-1)/n}} \left(1 + \Delta\left(n+1-k\right)\right)
.\end{multline}
Looking at the second term in \eqref{equation decomp 2}, using the version of Stirling's formula in \eqref{stirling} and the bound on $\Delta(n)$ in \eqref{delt} we see that there exists some constant $D$ such that
\[ \left| n^{1/2}\sum_{k=\fl{dn}+1}^{n} \frac{C_{k-1}}{4^k\sqrt{1-(k-1)/n}} \left(1 + \Delta\left(n+1-k\right)\right) \right| \leq \frac{D}{n}\sum_{k=\fl{dn}+1}^{n} \frac{1}{\sqrt{1-(k-1)/n}}. \]
Consequently 
\begin{multline} \limsup_{n\to\infty} \left| n^{1/2}\sum_{k=\fl{dn}+1}^{n} \frac{C_{k-1}}{4^k\sqrt{1-(k-1)/n}} \left(1 + \Delta\left(n+1-k\right)\right) \right|\\
\leq D' \int_{d}^1 \frac{1}{\sqrt{1-x}} dx 
= 2 D' \sqrt{1-d}, \label{belding}
\end{multline}
which can be made arbitrarily small depending on our choice of $d$.  

We now turn our attention to the first term of \eqref{equation decomp 2}. Since $n+1-k \to \infty$, given $\epsilon>0$, for sufficiently large $n$ we can use the version of Stirling's formula in \eqref{stirling} to get 
\begin{multline} \label{zack}
\frac{(1-\epsilon)}{4\sqrt{\pi}n} \sum_{k=\fl{cn}}^{\fl{dn}} \frac{1}{ \left(\frac{k-1}{n}\right)^{3/2} \sqrt{1-(k-1)/n}} \\
\leq 
n^{1/2}\sum_{k=\fl{cn}}^{\fl{dn}} \frac{C_{k-1}}{4^k\sqrt{1-(k-1)/n}} \left(1 + \Delta\left(n+1-k\right)\right) \\
\leq \frac{(1+\epsilon)}{4\sqrt{\pi}n} \sum_{k=\fl{cn}}^{\fl{dn}} \frac{1}{ \left(\frac{k-1}{n}\right)^{3/2} \sqrt{1-(k-1)/n}}.
\end{multline}
Furthermore,
\begin{eqnarray}\nonumber 
\lim_{n\to\infty} \frac{1}{n} \sum_{k=\fl{cn}}^{\fl{dn}} \frac{1}{ \left(\frac{k-1}{n}\right)^{3/2} \sqrt{1-(k-1)/n}} &=& \int_c^d \frac{1}{x^{3/2}\sqrt{1-x}} dx\\ &=& 2\left( \sqrt{c^{-1}-1} - \sqrt{d^{-1}-1}\right).\label{jesse}
\end{eqnarray}
so both the first and last terms in \eqref{zack} are converging to 
\begin{equation}
\label{principal} \frac{1}{2\sqrt{\pi}}\left( \sqrt{c^{-1}-1} - \sqrt{d^{-1}-1}\right).
\end{equation}
Combining  the limits for \eqref{belding} and the middle term in \eqref{zack}  and picking $d$ arbitrarily close to 1  we find that
\begin{equation}\label{slater} \lim_{n\to\infty}  n^{1/2}\sum_{k=\fl{cn}}^{n} \frac{C_{k-1}}{4^k\sqrt{1-(k-1)/n}} \left(1 + \Delta\left(n+1-k\right)\right) =  \sqrt{\frac{1-c}{4\pi c}}.\end{equation}
Plugging \eqref{slater} into \eqref{analects} and using \eqref{stirling} we get
\[ \lim_{n\to\infty} \frac{1}{\sqrt{n}} \E  \hat\xi_{k_n}(\bT^n)=  \sqrt{\frac{1-c}{\pi c}}, \]
as desired.
\end{proof}

\begin{theorem}\label{theorem excluded invariance}
Let $\Gamma^n$ be a uniformly random Dyck path of length $2n$ and $\sigma_{\Gamma^n}$ the corresponding \textbf{231}-avoiding permutation constructed as above.  For $c,\alpha > 0$, define 
\[B_{c,\alpha,n} = \{i : |\bt^{\Gamma^n}_{v_i}| \leq  cn^\alpha\}.\] 
If $0<\alpha<1/2$, then for every $\epsilon >0$ we have
\[ \lim_{n\to \infty} \P\left( \sup_{0\leq t\leq 1} \left|\frac{1}{\sqrt{2n}} \Gamma^n(2nt) + F^{B_{c,\alpha,n}}_{\sigma_{\Gamma^n}}(t) \right| > \epsilon \right) =0.\]
\end{theorem}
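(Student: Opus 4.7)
The plan is to combine the tree-theoretic identity
\[ i - \sigma_{\Gamma^n}(i) \;=\; \mathrm{ht}_{\Gamma^n}(v_i) - |\bt^{\Gamma^n}_{v_i}|, \]
which is immediate from \eqref{eq tree bijection}, with Proposition \ref{proposition height limit} and the counting estimate of Theorem \ref{theorem subtree sizes}. The key observation is that on $B_{c,\alpha,n}$ the subtree size satisfies $|\bt^{\Gamma^n}_{v_i}|/\sqrt{2n} \le cn^{\alpha-1/2}/\sqrt{2}$, which is uniformly $o(1)$ because $\alpha<1/2$. Hence
\[ -F^{B_{c,\alpha,n}}_{\sigma_{\Gamma^n}}(i/n) \;=\; \frac{\mathrm{ht}_{\Gamma^n}(v_i)}{\sqrt{2n}} + o(1), \]
and Proposition \ref{proposition height limit} then yields $\max_{i\in B_{c,\alpha,n}} \big|F^{B_{c,\alpha,n}}_{\sigma_{\Gamma^n}}(i/n) + \Gamma^n(2i)/\sqrt{2n}\big| \to 0$ in probability. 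This establishes the desired approximation on the rescaled grid $B_{c,\alpha,n}/n$.

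Next I would bound the gaps of $B_{c,\alpha,n}$. By Theorem \ref{theorem subtree sizes} (first case),
\[ \E|[n]\setminus B_{c,\alpha,n}| \;=\; \E\,\hat\xi_{\lfloor cn^\alpha\rfloor + 1}(\bt^{\Gamma^n}) \;=\; O(n^{1-\alpha/2}), \]
so Markov's inequality yields $|[n]\setminus B_{c,\alpha,n}| = o(n)$ with high probability. Since the maximum gap between consecutive elements of $B_{c,\alpha,n}\cup\{0,n\}$ is at most $|[n]\setminus B_{c,\alpha,n}|+1$, the maximum gap is itself $o(n)$ with high probability.

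Finally I would merge these estimates using the tightness of $\{\Gamma^n(2n\cdot)/\sqrt{2n}\}_{n\ge 1}$ in $C([0,1],\R)$, which follows from its convergence in distribution to Brownian excursion. For each $\delta>0$ tightness supplies an $\eta>0$ such that the modulus of continuity satisfies $\omega_{\Gamma^n/\sqrt{2n}}(\eta)<\delta$ with probability at least $1-\delta$, uniformly in $n$. Since the maximum gap is $<\eta n$ with high probability for large $n$, on any interpolation interval $[i/n,j/n]$ the normalized Dyck path oscillates by less than $\delta$, while the linear interpolation of $F^{B_{c,\alpha,n}}$ has endpoint values $o(1)$-close to $-\Gamma^n(2i)/\sqrt{2n}$ and $-\Gamma^n(2j)/\sqrt{2n}$. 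A short triangle-inequality argument then bounds $|F^{B_{c,\alpha,n}}_{\sigma_{\Gamma^n}}(t) + \Gamma^n(2nt)/\sqrt{2n}|$ by $2\delta + o(1)$ uniformly in $t$; sending $\delta\to 0$ after $n\to\infty$ completes the proof.

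The main obstacle is coupling the combinatorial estimate on gaps to the path regularity of $\Gamma^n$: the raw Lipschitz bound $|\Gamma^n(a)-\Gamma^n(b)|\le|a-b|$ is far too weak to cover gaps of size $n^{1-\alpha/2}$, and only the $\sqrt{\cdot}$-scale regularity of the scaling limit rescues the argument. This is why the hypothesis $\alpha>0$ is essential---it forces gaps to be genuinely $o(n)$ so that the modulus of continuity kills the error. The endpoint issue (that $v_0\notin B_{c,\alpha,n}$, since its subtree is the entire tree) is minor: the leftmost element of $B_{c,\alpha,n}$ lies at distance $o(n)$ from $0$, and $\Gamma^n(0)=0$, so the same tightness bound controls $F^{B_{c,\alpha,n}}_{\sigma_{\Gamma^n}}$ on the initial segment; the right endpoint is handled identically.
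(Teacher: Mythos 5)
Your proposal is correct and follows essentially the same route as the paper: the identity $i-\sigma_{\Gamma^n}(i)=\mathrm{ht}_{\Gamma^n}(v_i)-|\bt^{\Gamma^n}_{v_i}|$ combined with Proposition \ref{proposition height limit} and the bound $|\bt^{\Gamma^n}_{v_i}|\leq cn^{\alpha}=o(\sqrt{n})$ on $B_{c,\alpha,n}$, then Theorem \ref{theorem subtree sizes} plus Markov to make the gaps $o(n)$, and finally tightness/continuity of the excursion limit to control the linear interpolation. You merely spell out the final modulus-of-continuity step and the endpoint issue, which the paper leaves implicit.
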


\begin{proof}
Observe that
\[\begin{split} \max_{i \in B_{c,\alpha,n}} \left| \frac{1}{\sqrt{2n}} \Gamma^n(2i) + F^{B_{c,\alpha,n}}_{\sigma_{\Gamma^n}}(i/n)\right| & = \frac{1}{\sqrt{2n}} \max_{i \in B_{c,\alpha,n}}\left| \Gamma^n(2i) - \Ht_{\Gamma^n}(v_i) + |\bt^{\Gamma^n}_{v_i}|\right| \\
& \leq \frac{1}{\sqrt{2n}} \max_{i \in B_{c,\alpha,n}}\left| \Gamma^n(2i) - \Ht_{\Gamma^n}(v_i)\right| + \frac{c}{\sqrt{2}} n^{\alpha - 1/2}
.\end{split}\]

Since $\alpha <1/2$, this goes to $0$ in probability as $n$ goes to $\infty$ by Proposition \ref{proposition height limit}.  Moreover, by Theorem \ref{theorem subtree sizes} we see that for every $\beta < \alpha/2$ and 
\begin{equation} \label{the college years}
\P\left(n-|B_{c,\alpha,n}| > n^{1-\beta} \right) \leq \frac{1}{n^{1-\beta}} \E \hat\xi_{\fl{cn^{\alpha/2}}}(\bT^n)  \to 0.\end{equation}
An immediate consequence of this is that
\begin{equation}\label{the reunion}  \sup_{0\leq t \leq 1} \min_{ i\in B_{c,\alpha,n}} | t- i/n| \to 0\end{equation}
in probability.  The result now follows from the convergence of Dyck paths to Brownian excursion and the continuity of Brownian excursion.
\end{proof}

\begin{proof}[Proof of Theorem \ref{london}]
The set $\shortexc$ in Theorem \ref{london} is $B_{c,\alpha,n}$ in the above theorem. The claimed bound on the size of $\shortexc$ is given in \eqref{the college years}.
\end{proof}

Finding the set $B_{c,\alpha,n}$ requires some large scale knowledge of the values of the permutation.   We can also prove an invariance principle for randomly selected values.

\begin{theorem}
Let $\Gamma^n$ be a uniformly random Dyck path of length $2n$ and $\sigma_{\Gamma^n}$ the corresponding \textbf{231}-avoiding permutation constructed as above.  Fix $0<\alpha< 1/4$ and let $c_n\uparrow \infty$ be a sequence such that $n^{-\alpha}c_n \to c >0$.  Let $U^n_1, \dots, U^n_{c_n}$ be i.i.d. uniform on $\{1,\dots, n\}$, independent of $\Gamma^n$.  Defining $ B_n = \{U^n_i\}_{i=1}^n$, for every $\epsilon >0$ we have
\[ \lim_{n\to \infty} \P\left( \sup_{0\leq t\leq 1} \left|\frac{1}{\sqrt{2n}} \Gamma^n(2nt) + F^{B_n}_{\sigma_{\Gamma^n}}(t) \right| > \epsilon \right) =0.\]
\end{theorem}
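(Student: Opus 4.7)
My plan is to reduce the statement to three ingredients: the tree identity $i - \sigma_\gamma(i) = \Ht_\gamma(v_i) - |\bt^\gamma_{v_i}|$ from Section~\ref{counteroffer}, the height approximation of Proposition~\ref{proposition height limit}, and the subtree-size bound of Theorem~\ref{theorem subtree sizes}, together with standard facts about spacings of i.i.d.\ uniforms and tightness. I will assume $\{0,n\} \subset B_n$ (augmenting if necessary, which changes the sample by $O(1)$ and does not affect spacing asymptotics) so that $F^{B_n}_{\sigma_{\Gamma^n}}$ is defined on all of $[0,1]$; the boundary then costs nothing because $E^n_{\sigma_{\Gamma^n}}(0)=0$ and $|E^n_{\sigma_{\Gamma^n}}(n)|\leq h_n = o_{\P}(\sqrt{n})$.

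\textbf{Step 1 (sampled-point agreement).} For each $i \in B_n$, the tree identity yields
\[
 F^{B_n}_{\sigma_{\Gamma^n}}(i/n) + \frac{\Gamma^n(2i)}{\sqrt{2n}} \;=\; \frac{\Gamma^n(2i) - \Ht_{\Gamma^n}(v_i)}{\sqrt{2n}} \;+\; \frac{|\bt^{\Gamma^n}_{v_i}|}{\sqrt{2n}}.
\]
Proposition~\ref{proposition height limit} shows the first term is $o_{\P}(1)$ uniformly over $i \in \{0,\dots,n\}$. For the second term, fix $\alpha' \in (2\alpha, 1/2)$ (possible precisely because $\alpha < 1/4$) and let $k_n = \lfloor n^{\alpha'}\rfloor$. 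Since $\hat\xi_{k_n}(\bt)$ counts indices with $|\bt_{v_i}| \geq k_n$, Theorem~\ref{theorem subtree sizes} together with Markov's inequality and the independence of the $U^n_i$ from $\Gamma^n$ give, by a union bound,
\[
 \P\Bigl(\max_{i\leq c_n} |\bt^{\Gamma^n}_{v_{U^n_i}}| \geq k_n\Bigr) \;\leq\; \frac{c_n}{n}\, \E\, \hat\xi_{k_n}(\bT^n) \;=\; O\!\bigl(n^{\alpha-\alpha'/2}\bigr) \;\to\; 0.
\]
On the complementary event the second term is at most $n^{\alpha'}/\sqrt{2n}=o(1)$, so $\max_{i\in B_n} |F^{B_n}_{\sigma_{\Gamma^n}}(i/n)+\Gamma^n(2i)/\sqrt{2n}| = o_{\P}(1)$.

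\textbf{Step 2 (spacings and interpolation).} Classical uniform-spacing estimates give that the maximum spacing $\Delta_n$ among the sorted sample (with $0$ and $1$ appended) is $O(c_n^{-1}\log c_n) = O(n^{-\alpha}\log n)$ in probability, hence $\Delta_n \to 0$. Since $\Gamma^n(2n\cdot)/\sqrt{2n}$ converges in distribution to $\mathbbm{e}$ in $C([0,1])$, the family is tight and its modulus of continuity $\omega_n(\delta) := \sup_{|s-t|\leq\delta}|\Gamma^n(2ns)-\Gamma^n(2nt)|/\sqrt{2n}$ satisfies $\omega_n(\Delta_n)\to 0$ in probability. On each interval between adjacent sampled indices, $F^{B_n}_{\sigma_{\Gamma^n}}$ is affine; comparing it to the linear interpolant of $-\Gamma^n(2n\cdot)/\sqrt{2n}$ on the same interval gives the standard bound
\[
 \sup_{t\in[0,1]} \left| F^{B_n}_{\sigma_{\Gamma^n}}(t) + \frac{\Gamma^n(2nt)}{\sqrt{2n}}\right| \;\leq\; \max_{i\in B_n}\left| F^{B_n}_{\sigma_{\Gamma^n}}(i/n) + \frac{\Gamma^n(2i)}{\sqrt{2n}} \right| + \omega_n(\Delta_n),
\]
which tends to $0$ in probability by Step~1.

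\textbf{Main obstacle.} The delicate point is balancing the two constraints on $\alpha'$ in Step~1: the union bound over $c_n \asymp n^\alpha$ independent samples needs $\alpha'/2 > \alpha$, while making the fringe-subtree contribution negligible on the scale $\sqrt{n}$ needs $\alpha' < 1/2$. These are jointly feasible exactly when $\alpha < 1/4$, matching the hypothesis. To push beyond $\alpha = 1/4$ one would need joint control of simultaneously large fringe subtrees at dependent (or many) indices, going beyond what Theorem~\ref{theorem subtree sizes} provides directly.
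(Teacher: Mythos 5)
Your proposal is correct and follows essentially the same route as the paper: both control the sampled points via the tree identity, Proposition \ref{proposition height limit}, and a Markov/union bound from Theorem \ref{theorem subtree sizes} with a cutoff exponent in $(2\alpha,1/2)$, and then interpolate using the vanishing spacings of the uniforms together with tightness of the rescaled Dyck path. The only differences are that you spell out the interpolation and boundary details more explicitly than the paper, which simply defers to the proof of Theorem \ref{theorem excluded invariance}.
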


\begin{proof}
The proof is essentially the same as that of Theorem \ref{theorem excluded invariance}.  By Theorem \ref{theorem subtree sizes} for every $d>0$ and $2\alpha<\beta < 1/2$, we have
\[\P\left( \max_{1\leq i \leq c_n} |\bt^{\Gamma^n}_{v_{U^n_i}}|  \geq \fl{dn^\beta}\right) \leq \frac{c_n}{n} \E \hat\xi_{\fl{dn^\beta}}(\bT^n) = \frac{c_n}{n^{\beta/2}}\frac{1}{n^{1-\beta/2}} \E \hat\xi_{\fl{dn^\beta}}(\bT^n) \to 0. \]
Consequently,  
\[ \max_{i \in B_{n}} \left| \frac{1}{\sqrt{2n}} \Gamma^n(2i) + F^{B_n}_{\sigma_{\Gamma^n}}(i/n)\right| \leq \frac{1}{\sqrt{2n}} \max_{i \in B_{n}}\left| \Gamma^n(2i) - \Ht_{\Gamma^n}(v_i) \right|+  \frac{1}{\sqrt{2n}} \max_{i \in B_{n}} |\bt^{\Gamma^n}_{v_i}|\]
goes to $0$ in probability.  In place of \eqref{the reunion} we use the elementary fact that
\[  \sup_{0\leq t \leq 1} \min_{1\leq i \leq c_n } | t- U^n_i/n| \to 0 \]
in probability, and the proof concludes in the same fashion as the proof of Theorem \ref{theorem excluded invariance}. 
\end{proof}

\section{Appendix A: Moderate deviations for random walks}
\label{appendixa}
In this section we recall some classical moderate deviations bounds for random walks.  
The next two results are special cases of \cite[Theorem III.12, Theorem III.15]{Petrov1} respectively (see also \cite[Lemma A1, Lemma A2]{MaMo03}). 

\begin{lemma}\label{lemma maximal inequality}
Let $X_1,X_2,\dots$ be i.i.d with $\E X_1=0$ and let $S_n =X_1+\cdots + X_n$. Suppose that $\sigma^2= \E(X_1^2) < \infty$.  For all $x$ and $n$ we have
\[\P\left(\max_{1\leq k\leq n} S_k \geq x\right) \leq 2 \P\left(S_n\geq x - \sqrt{2n\sigma^2}\right).\]
\end{lemma}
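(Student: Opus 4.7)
The plan is to use the classical Ottaviani-style first-passage decomposition. Define the stopping time
\[ \tau = \inf\{k \geq 1 : S_k \geq x\},\]
so that $\{\max_{1 \leq k \leq n} S_k \geq x\} = \{\tau \leq n\}$. The key observation is that on the event $\{\tau = k\}$, we already have $S_k \geq x$, so if additionally the remaining increment $S_n - S_k$ is not too negative, then $S_n$ itself will exceed $x - \sqrt{2n\sigma^2}$.

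More precisely, I would first write
\[ \{S_n \geq x - \sqrt{2n\sigma^2}\} \;\supseteq\; \bigsqcup_{k=1}^n \{\tau = k\} \cap \{S_n - S_k \geq -\sqrt{2n\sigma^2}\},\]
since on the right-hand side we have $S_n = S_k + (S_n - S_k) \geq x - \sqrt{2n\sigma^2}$. The events $\{\tau = k\}$ and $\{S_n - S_k \geq -\sqrt{2n\sigma^2}\}$ are independent because $\{\tau = k\}$ depends only on $X_1, \dots, X_k$ while $S_n - S_k$ depends only on $X_{k+1}, \dots, X_n$.

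Next, I would bound $\P(S_n - S_k \geq -\sqrt{2n\sigma^2})$ from below by $\tfrac{1}{2}$ using Chebyshev's inequality: since $\E(S_n - S_k) = 0$ and $\var(S_n - S_k) = (n-k)\sigma^2 \leq n\sigma^2$,
\[ \P\bigl(|S_n - S_k| > \sqrt{2n\sigma^2}\bigr) \;\leq\; \frac{(n-k)\sigma^2}{2n\sigma^2} \;\leq\; \frac{1}{2},\]
so $\P(S_n - S_k \geq -\sqrt{2n\sigma^2}) \geq 1/2$. Summing the disjoint events then gives
\[ \P\bigl(S_n \geq x - \sqrt{2n\sigma^2}\bigr) \;\geq\; \sum_{k=1}^n \P(\tau = k) \cdot \frac{1}{2} \;=\; \frac{1}{2}\,\P(\tau \leq n),\]
which rearranges to the claimed inequality.

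There is essentially no obstacle here; the only subtlety is making sure the constant $\sqrt{2n\sigma^2}$ is precisely what the Chebyshev bound needs to yield the factor $1/2$, so that the constant $2$ in the final inequality comes out correctly.
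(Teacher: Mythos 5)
Your argument is correct and is exactly the classical Ottaviani-type first-passage decomposition: the paper itself gives no proof of this lemma but cites it as a special case of Theorem III.12 in Petrov's book, whose proof is precisely this stopping-time-plus-Chebyshev argument with the constant $\sqrt{2n\sigma^2}$ calibrated to make the Chebyshev bound equal $1/2$. Nothing to add.
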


\begin{lemma}\label{lemma moderate deviations}
Let $X_1,X_2,\dots$ be i.i.d with $\E X_1=0$ and let $S_n =X_1+\cdots + X_n$.  Suppose that there exists $a>0$ such that $\E(e^{t |X_1|}) < \infty$.  Then there exist constants $g, T>0$, independent of $n$, such that
\[ \P(S_n\geq x) \leq \begin{cases} \exp\left(-\frac{x^2}{2gn}\right) &  \textrm{if } 0\leq x\leq ngT \\ \\ \exp\left(-\frac{Tx}{2}\right) & \textrm{if }  x\geq ngT.\end{cases}\]
\end{lemma}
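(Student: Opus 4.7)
The plan is to apply the standard Chernoff / exponential Markov inequality with the cumulant generating function $\psi(t) := \log \E e^{t X_1}$, and then exploit the quadratic behavior of $\psi$ near $0$ coming from the fact that $\E X_1 = 0$. The exponential moment hypothesis guarantees that $\psi$ is finite and smooth on some interval $[0,a)$, with $\psi(0)=0$, $\psi'(0)=\E X_1 = 0$, and $\psi''(0)=\sigma^2 < \infty$.

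First I would choose constants $g > \sigma^2$ and $T \in (0,a)$ small enough so that the Taylor expansion gives
\[
\psi(t) \le \tfrac{1}{2} g\, t^2 \qquad \text{for all } t \in [0,T].
\]
This is possible because $\psi(t) = \tfrac{1}{2}\sigma^2 t^2 + O(t^3)$ as $t \downarrow 0$, and the cubic remainder can be absorbed into the extra $g-\sigma^2>0$ by shrinking $T$. Then by independence and Markov's inequality, for every $t \in [0,T]$,
\[
\P(S_n \ge x) \;\le\; e^{-tx}\,\E e^{t S_n} \;=\; e^{-tx + n\psi(t)} \;\le\; \exp\!\left(\tfrac{1}{2} n g t^2 - tx\right).
\]

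The second step is to optimize the exponent $\varphi(t):=\tfrac12 ngt^2 - tx$ over $t\in[0,T]$. The unconstrained minimizer is $t^\ast = x/(ng)$, with minimum value $-x^2/(2gn)$. If $0 \le x \le ngT$ then $t^\ast \in [0,T]$ is admissible and we obtain the first bound
\[
\P(S_n \ge x) \le \exp\!\left(-\frac{x^2}{2gn}\right).
\]
If instead $x \ge ngT$, then $\varphi$ is decreasing on $[0,T]$ (since $\varphi'(t) = ngt - x \le ngT - x \le 0$), so the minimum over the admissible range is attained at $t=T$, giving
\[
\P(S_n \ge x) \le \exp\!\left(\tfrac{1}{2}ngT^2 - Tx\right) \;=\; \exp\!\left(-\tfrac{T}{2}\left(2x - ngT\right)\right) \;\le\; \exp\!\left(-\tfrac{Tx}{2}\right),
\]
where the last inequality uses exactly the hypothesis $x \ge ngT$ (which forces $2x - ngT \ge x$).

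There is no real obstacle here; the only point requiring any care is the choice of $g$ and $T$ in the initial Taylor estimate, which must be made before $n$ and $x$ enter the picture so that the constants are universal. The argument is otherwise a clean application of Chernoff's bound together with the observation that the cumulant generating function is quadratic to leading order at the origin.
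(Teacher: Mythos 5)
Your proof is correct. The paper does not actually prove this lemma; it is quoted verbatim as a special case of Petrov's Theorem III.15, whose proof in Petrov's book is exactly the exponential Chebyshev (Chernoff) argument you give: bound $\psi(t)\le \tfrac12 g t^2$ on a small interval $[0,T]$ using $\psi(0)=\psi'(0)=0$ and $\psi''(0)=\sigma^2<\infty$, then optimize $e^{n\psi(t)-tx}$ over $t\in[0,T]$, splitting according to whether the unconstrained minimizer $x/(ng)$ lies in the interval. Your handling of both regimes, including the observation that $x\ge ngT$ forces $2x-ngT\ge x$ in the boundary case, is complete and the constants $g,T$ depend only on the law of $X_1$, as required.
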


These lemmas lead immediately to the following corollary. 

\begin{corollary}\label{corollary fluctuations}
Maintaining the hypotheses of Lemma \ref{lemma moderate deviations}, fix $\epsilon, c>0$ and $0< \alpha< 2\beta$ and let $\nu  = \min(\beta, 2\beta-\alpha)$.  There exist constants $A,B >0$ such that
\[ \P\left( \max_{1\leq i\leq n} \max_{|i-j|\leq cn^{\alpha}} |S_j -S_i| \geq \epsilon n^\beta \right)\leq A \exp\left( -B n^{\nu}\right)\]
\end{corollary}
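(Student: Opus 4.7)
The plan is to reduce the double maximum to a union bound over $O(n^{1-\alpha})$ windows of length of order $cn^\alpha$, apply the maximal inequality (Lemma~\ref{lemma maximal inequality}) on each window, and then invoke the Petrov-style tail bound (Lemma~\ref{lemma moderate deviations}).

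First I would fix $\ell = \lfloor cn^\alpha\rfloor$ and cover $\{0,1,\dots,n\}$ by the overlapping blocks $W_k = \{k\ell,k\ell+1,\dots,(k+2)\ell\}$ for $k=0,1,\dots,K$ with $K = O(n^{1-\alpha})$. Any pair $(i,j)$ with $|i-j|\leq cn^\alpha$ lies entirely in some single $W_k$, and for indices in $W_k$ one has the crude bound $|S_j-S_i|\leq 2\max_{m\in W_k}|S_m - S_{k\ell}|$. A union bound over $k$ therefore reduces the whole statement to the single-window estimate
\[ \P\left(\max_{0\leq m\leq 2\ell}|S_m|\geq \tfrac{\epsilon}{4}n^\beta\right)\leq A'\exp(-B'n^\nu),\]
where I have used stationarity of increments to recenter at $k\ell$; the constants $A',B'$ are independent of $k$.

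For this single-window estimate, Lemma~\ref{lemma maximal inequality} (applied once to $S$ and once to $-S$) bounds the left hand side by $4\,\P(|S_{2\ell}|\geq \tfrac{\epsilon}{4}n^\beta - \sqrt{4\ell\sigma^2})$. The hypothesis $\alpha<2\beta$ enters precisely here: it makes the correction $\sqrt{4\ell\sigma^2}=O(n^{\alpha/2})$ negligible compared to $n^\beta$, so for large $n$ the probability is at most $4\,\P(|S_{2\ell}|\geq \epsilon n^\beta/8)$. Lemma~\ref{lemma moderate deviations} then splits into two cases depending on whether $x=\epsilon n^\beta/8$ sits below or above the threshold $2\ell g T$. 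In the subcritical (Gaussian) regime one obtains $\exp(-c_1 n^{2\beta-\alpha})$, and in the supercritical (linear) regime one obtains $\exp(-c_2 n^\beta)$. Taking the worse of the two exponents produces exactly $\nu=\min(\beta,2\beta-\alpha)$.

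Finally, the union bound multiplies the resulting single-window estimate by the polynomial factor $O(n^{1-\alpha})$, which is absorbed into the exponential by slightly shrinking $B'$ to obtain the final constants $A$ and $B$. The only point requiring real attention is the interplay between the parameters: the hypothesis $\alpha<2\beta$ is what simultaneously ensures that the maximal inequality's correction term is swamped by $n^\beta$ and that the smaller of the two regime exponents in Lemma~\ref{lemma moderate deviations} is exactly $\nu$. No step presents a serious obstacle.
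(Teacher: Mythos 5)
Your proposal is correct and is precisely the argument the paper has in mind: the paper offers no written proof, stating only that the corollary follows ``immediately'' from Lemmas~\ref{lemma maximal inequality} and~\ref{lemma moderate deviations}, and your overlapping-block decomposition, single-window maximal inequality, and two-regime application of the Petrov bound is the standard way to make that immediacy precise. The bookkeeping (the $O(n^{\alpha/2})$ correction being swallowed by $n^\beta$ via $\alpha<2\beta$, and the polynomial union-bound factor being absorbed into the exponential) is handled correctly.
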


Let $S=(S_m,m\geq 0)$ be a simple symmetric random walk on $\Z$ with $S_0=0$.  Define $V_0=0$ and for $m\geq 1$ let $V_m = \inf\{k>V_{m-1} : S_k-S_{k-1} = 1\}$.  Let $\eta(S) = \inf\{ k : S_k=-1\}$.  Observe that $(V_m-V_{m-1})_{m\geq 1}$ is an i.i.d sequence of geometric random variables with parameter $1/2$.

\begin{corollary}\label{corollary V-shift}
Fix $\epsilon >0$ and $1/2 < \alpha\leq 1$.  There exist constants $A, B>0$ such that 
\[ \P\left( \max_{1\leq i\leq n} |V_i - 2i| \geq \epsilon n^\alpha\right) \leq A \exp\left(-B n^{2 \alpha -1}\right).\]
\end{corollary}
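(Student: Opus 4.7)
The plan is to recognize $V_n-2n$ as a centered random walk with light tails and then invoke the moderate deviation machinery already recorded in the appendix. Set $X_m = (V_m-V_{m-1})-2$ for $m\geq 1$. Since the inter-arrival times $V_m-V_{m-1}$ are i.i.d.\ Geometric$(1/2)$ random variables on $\{1,2,3,\dots\}$ with mean $2$, the $X_m$ are i.i.d., mean zero, with variance $\sigma^2 = 2$, and enjoy a finite moment generating function in a neighborhood of the origin. Writing $W_k = X_1+\cdots + X_k = V_k - 2k$, the corollary amounts to an estimate of $\P(\max_{1\leq k\leq n}|W_k| \geq \epsilon n^\alpha)$.

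First I would apply Lemma~\ref{lemma maximal inequality} to the sequences $(X_m)$ and $(-X_m)$ and combine by a union bound, giving
\[ \P\Big(\max_{1\leq k\leq n}|W_k|\geq \epsilon n^\alpha\Big) \leq 4\,\P\Big(|W_n| \geq \epsilon n^\alpha - \sqrt{2n\sigma^2}\Big). \]
Since $\alpha > 1/2$, the subtracted term is of lower order than $\epsilon n^\alpha$, so for all $n$ beyond some threshold (the earlier indices can be absorbed into the constant $A$) the right-hand side is at most $4\P(|W_n| \geq \tfrac{\epsilon}{2} n^\alpha)$.

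Next I would apply Lemma~\ref{lemma moderate deviations} to $W_n$ and to $-W_n$ with $x = \tfrac{\epsilon}{2} n^\alpha$. If $\alpha \in (1/2,1)$, then eventually $x \leq ngT$, so the sub-Gaussian regime applies and yields a bound of the form $\exp(-x^2/(2gn)) = \exp\bigl(-\epsilon^2 n^{2\alpha-1}/(8g)\bigr)$. If $\alpha = 1$, depending on whether $\epsilon/2$ exceeds $gT$, one of the two regimes applies and in either case produces a bound of the form $\exp(-Bn) = \exp(-Bn^{2\alpha-1})$. Combining the two cases, there exist constants $A,B>0$ such that the desired inequality holds for every $\alpha \in (1/2,1]$.

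I do not anticipate a genuine obstacle here: the proof is essentially a one-line composition of Lemmas~\ref{lemma maximal inequality} and \ref{lemma moderate deviations} (alternatively, one could invoke Corollary~\ref{corollary fluctuations} with its parameters chosen as $c=1$, its ``$\alpha$'' equal to $1$, and its ``$\beta$'' equal to our $\alpha$, for which the condition $\alpha > 1/2$ is exactly what guarantees the hypothesis $0<1<2\beta$ and the exponent $\nu = \min(\beta,2\beta-1) = 2\alpha-1$). The only minor point to check is that the $\sqrt{2n\sigma^2}$ correction in the maximal inequality is asymptotically negligible compared to $n^\alpha$, which uses precisely the assumption $\alpha>1/2$ and explains why the rate degrades to $n^{2\alpha-1}$ rather than $n^\alpha$.
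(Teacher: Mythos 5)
Your proof is correct and is exactly the argument the paper intends: the paper records the observation that $(V_m-V_{m-1})_{m\geq 1}$ are i.i.d.\ Geometric$(1/2)$ precisely so that Corollary~\ref{corollary V-shift} follows by centering and applying Lemmas~\ref{lemma maximal inequality} and~\ref{lemma moderate deviations}, which is what you do (and your handling of the negligible $\sqrt{2n\sigma^2}$ correction and of the $\alpha=1$ edge case is right). No gaps.
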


\begin{corollary} \label{corollary p1}
Let $\Gamma^n$ be a uniformly random Dyck path of length $2n$.  There exist constants $A,B,\nu>0$ such that for all $n\geq 1$
\[ \P(\max_{1\leq i\leq 2n} \Gamma^n_i \geq 0.4 n^{0.6}) \leq A\exp\left(-Bn^\nu\right) \]
and 
\[ \P\left( \max_{1\leq i\leq 2n} \max_{|i-j|\leq 2n^{0.6}} |\Gamma^n_j -\Gamma^n_i| \geq  0.5 n^{0.4} \right)\leq A \exp\left( -B n^{\nu}\right)\]
\end{corollary}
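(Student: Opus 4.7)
The plan is to reduce both bounds to unconditioned simple random walk estimates, via the identity already used in the proof of Proposition \ref{proposition height limit}:
\[ (\Gamma^n_i)_{i=0}^{2n} \stackrel{d}{=} (S_i)_{i=0}^{2n} \text{ given } \eta(S)=2n+1, \]
where $(S_k)_{k\geq 0}$ is simple symmetric random walk on $\Z$ starting at $0$ and $\eta(S)=\inf\{k:S_k=-1\}$. Since $\P(\eta(S)=2n+1)=\Theta(n^{-3/2})$, for any event $E$ depending on the first $2n+1$ values we get
\[ \P((\Gamma^n_i)_{i=0}^{2n}\in E) \leq O(n^{3/2})\,\P((S_i)_{i=0}^{2n}\in E). \]
So it suffices to prove each inequality for the unconditioned walk with an exponential bound in $n^{\nu'}$ for some $\nu'>0$; the polynomial prefactor $n^{3/2}$ will be absorbed at the cost of taking $\nu$ slightly smaller than $\nu'$.

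For the first bound, I would apply Lemma \ref{lemma maximal inequality} to reduce to a tail bound on $S_{2n}$:
\[ \P\bigl(\max_{1\leq i\leq 2n} S_i \geq 0.4\,n^{0.6}\bigr) \leq 2\,\P\bigl(S_{2n} \geq 0.4\,n^{0.6} - 2\sqrt{n}\bigr). \]
For all sufficiently large $n$, the threshold $0.4\,n^{0.6}-2\sqrt{n}$ lies in the interval $[0,2ngT]$ (where $g,T$ are the constants of Lemma \ref{lemma moderate deviations} for the Rademacher distribution), and is at least $0.3\,n^{0.6}$. Applying the first case of Lemma \ref{lemma moderate deviations} gives an upper bound of the form $\exp(-C\,n^{0.2})$, which multiplied by $O(n^{3/2})$ still decays faster than $\exp(-B\,n^{\nu})$ for any $\nu<0.2$. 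For the second bound, Corollary \ref{corollary fluctuations} applies directly to $(S_i)_{i=0}^{2n}$ with $\alpha=0.6$, $\beta=0.4$ and $c=2$; the condition $\alpha<2\beta$ is satisfied and yields exponent $\nu=\min(0.4,\,2(0.4)-0.6)=0.2$. Combining with the $O(n^{3/2})$ conditioning factor again yields the desired subexponential bound.

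The main (minor) obstacle is just bookkeeping: verifying that $0.4\,n^{0.6}-2\sqrt{n}$ really does sit in the ``Gaussian'' regime of Lemma \ref{lemma moderate deviations} for large $n$ and keeping track of the fact that the polynomial factor $n^{3/2}$ from the conditioning does not destroy the exponential rate. No substantive new ideas beyond those in Proposition \ref{proposition height limit} and Corollary \ref{corollary fluctuations} are needed; the proof is essentially an application of the transfer-by-conditioning recipe established earlier in the appendix.
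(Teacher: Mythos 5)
Your proposal is correct and follows essentially the same route as the paper: transfer to the unconditioned walk via the identity $\Gamma^n \overset{d}{=} (S_k)_{k=0}^{2n}$ given $\eta(S)=2n+1$ and the bound $\P(\eta(S)=2n+1)=\Theta(n^{-3/2})$, then apply Lemmas \ref{lemma maximal inequality} and \ref{lemma moderate deviations} for the first claim and Corollary \ref{corollary fluctuations} (with $\alpha=0.6$, $\beta=0.4$, $c=2$) for the second. Your version actually spells out the parameter bookkeeping more explicitly than the paper's two-sentence proof does.
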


\begin{proof}
Noting that 
\[ \Gamma^n \overset{d}{=} (S_k, 0\leq k \leq 2n) \textrm{ given } \eta(S) =2n+1,\]
the first claim is an immediate consequence of Lemmas \ref{lemma maximal inequality} and \ref{lemma moderate deviations} combined with the fact that $\P(\eta(S) =2n+1) \sim cn^{-3/2}$ for some $c>0$.  The second claim follows similarly from Corollary \ref{corollary fluctuations}
\end{proof}

Combining these two corollaries, we obtain the following proposition.

\begin{proposition}\label{prop walk flucs}
Fix $\epsilon >0$ and $1/4 < \beta\leq 1$.  For every $\alpha$ such that $1/2 <\alpha < \min(2\beta,1)$ there exist constants $A, B>0$ such that
\[ \P\left( \max_{1\leq i\leq n} |S_{2i} -S_{V_i}| \geq \epsilon n^\beta\right) \leq A \exp\left( -B n^{\nu}\right), \]
where $\nu = \min(\beta, 2\beta-\alpha, 2\alpha -1)$
\end{proposition}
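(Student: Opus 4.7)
The plan is to combine the two preceding corollaries via a union bound, using an auxiliary scale $\alpha$ to decouple the randomness of the up-step times $V_i$ from the fluctuations of the walk $S$ itself. Fix $\alpha$ with $1/2 < \alpha < \min(2\beta, 1)$ as in the hypothesis, and consider the event
\[ E_n = \left\{ \max_{1\leq i\leq n} |V_i - 2i| \leq \epsilon n^\alpha \right\}. \]
Because $\alpha > 1/2$, Corollary \ref{corollary V-shift} applies and yields constants $A_1, B_1 > 0$ with $\P(E_n^c) \leq A_1 \exp(-B_1 n^{2\alpha - 1})$.

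On the event $E_n$ every pair of indices $(2i, V_i)$ with $1\leq i \leq n$ satisfies $|2i - V_i| \leq \epsilon n^\alpha$, and both indices lie in $\{0,1,\dots,3n\}$ for $n$ sufficiently large. Consequently
\[ E_n \cap \left\{ \max_{1\leq i\leq n} |S_{2i} - S_{V_i}| \geq \epsilon n^\beta \right\} \subseteq \left\{ \max_{1\leq k \leq 3n}\ \max_{|j-k|\leq \epsilon n^\alpha} |S_j - S_k| \geq \epsilon n^\beta \right\}. \]
Since $0 < \alpha < 2\beta$, Corollary \ref{corollary fluctuations} applies to the right-hand side and furnishes constants $A_2, B_2 > 0$ such that this event has probability at most $A_2 \exp(-B_2 n^{\min(\beta,\, 2\beta - \alpha)})$.

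A union bound then gives
\[ \P\left( \max_{1\leq i\leq n} |S_{2i} - S_{V_i}| \geq \epsilon n^\beta \right) \leq A_1 \exp(-B_1 n^{2\alpha - 1}) + A_2 \exp(-B_2 n^{\min(\beta,\, 2\beta - \alpha)}), \]
and absorbing constants produces the claimed exponential decay at rate $\nu = \min(\beta,\, 2\beta - \alpha,\, 2\alpha - 1)$. There is no genuine obstacle: the only thing to verify is that the admissible range for $\alpha$ is nonempty (which is the content of the hypothesis $\beta > 1/4$, since we need $1/2 < 2\beta$) and that it forces both $2\alpha - 1 > 0$ and $2\beta - \alpha > 0$, so all three exponents entering $\nu$ are strictly positive. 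The proof is then simply the assembly of Corollaries \ref{corollary V-shift} and \ref{corollary fluctuations} through this two-scale decomposition.
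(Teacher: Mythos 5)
Your proof is correct and is essentially identical to the paper's: both decompose via a union bound into the event that some $V_i$ deviates from $2i$ by more than $n^{\alpha}$ (controlled by Corollary \ref{corollary V-shift}) and the event that the walk fluctuates by $\epsilon n^{\beta}$ over a window of width $O(n^{\alpha})$ (controlled by Corollary \ref{corollary fluctuations}), then combine the two exponential rates into $\nu$. Your remark about extending the index range to $3n$ is a small point of added care that does not change the argument.
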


\begin{proof}
Fix $\alpha$ such that $1/2 <\alpha < \min(2\beta,1)$.  By Corollaries \ref{corollary fluctuations} and \ref{corollary V-shift} there exist constants $C, D>0$ such that
\[ \begin{split} \P\left( \max_{1\leq i\leq n} |S_{2i} -S_{V_i}| \geq \epsilon n^\beta\right) & \leq \P\left( \max_{1\leq i\leq n} |V_i - 2i| \geq  n^\alpha \right) +  \P\left( \max_{1\leq i\leq n} \max_{|i-j|\leq  n^{\alpha}} |S_j -S_i| \geq \epsilon n^\beta \right) \\
& \leq C\left[ \exp\left(-D n^{2 \alpha -1}\right) + \exp\left( -D n^{\min(\beta, 2\beta-\alpha)}\right) \right] \\
& \leq 2C  \exp\left( -D n^{\nu}\right)
.\end{split}\]
\end{proof}

Let $\kappa_1 = \inf\{ k : S_k -S_{k-1} =-1\}$, and for $i\geq 1$ define
\[  \rho_i =  \inf\{ k>\kappa_i : S_k -S_{k-1} =1\} \quad \textrm{and}\quad \kappa_{i+1} =  \inf\{ k>\rho_i : S_k -S_{k-1} =-1\} .\]
Observe that the number of decreases in the $i$'th run of decreases is $\bar d_i=\rho_i-\kappa_i$.  Observe that $(d_i)_{i=1}^\infty$ is an i.i.d. sequence of geometric random variables with $\P(\bar d_1=k) = 1/2^{k}$ for $k=1,2,\dots$.  Let $\bar D_k=\sum_{i=1}^k \bar d_i$.

\begin{proposition}\label{proposition random walk decreases}
For every $\epsilon>0$ there exist constants $A,B,\nu>0$ such that
\[ \P\left( \max_{1\leq i,j \leq n \atop |i-j|\geq n^{0.3}} |\bar D_j -\bar D_i - 2(j-i)| \geq  \epsilon |i-j|^{0.6} \right)\leq A \exp\left( -B n^{\nu}\right)\]
\end{proposition}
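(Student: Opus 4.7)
The natural first move is to recenter: write $\bar D_j - \bar D_i - 2(j-i) = T_j - T_i$, where $T_k = \sum_{l=1}^k (\bar d_l - 2)$ is a random walk with mean-zero i.i.d.\ increments. Since each $\bar d_l$ is geometric with parameter $1/2$, the increments have a finite moment generating function in a neighborhood of $0$, so Lemmas \ref{lemma maximal inequality} and \ref{lemma moderate deviations} apply. The proposition then reduces to estimating
\[ \P\!\left( \max_{\substack{1 \leq i < j \leq n \\ j-i \geq n^{0.3}}} |T_j - T_i| \geq \epsilon (j-i)^{0.6} \right). \]

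The plan is a dyadic decomposition in $j-i$. For each integer $k$ with $n^{0.3} \leq 2^k \leq 2n$ (there are $O(\log n)$ such $k$), set $m_k = 2^k$ and let $F_k$ be the event that some pair $(i,j)$ with $m_k \leq j-i < 2 m_k$ violates the bound. On $F_k$ we have $|T_j - T_i| \geq \epsilon m_k^{0.6}$, and any such pair lies in one of $O(n/m_k)$ overlapping intervals of the form $[b, b + 3m_k]$. Inside any such interval, $|T_j - T_i| \leq 2 \max_{0 \leq r \leq 3 m_k} |T_{b+r} - T_b|$, and by stationarity this local maximum is distributed as $\max_{0 \leq r \leq 3m_k} |T_r|$. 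Applying the maximal inequality (Lemma \ref{lemma maximal inequality}) to each tail and then the moderate-deviation bound (Lemma \ref{lemma moderate deviations}) yields
\[ \P\!\left( \max_{0\leq r\leq 3m_k} |T_r| \geq (\epsilon/4)\, m_k^{0.6} \right) \;\leq\; C_1 \exp(-C_2 m_k^{0.2}), \]
valid for $m_k$ at least some absolute constant. Here the crucial analytic fact is $0.6 > 1/2$: the Gaussian-scale correction $\sqrt{3 m_k \sigma^2}$ coming from Lemma \ref{lemma maximal inequality} is negligible compared to $m_k^{0.6}$ and can be absorbed into the constant, while the Chernoff exponent $x^2/m_k$ at $x \asymp m_k^{0.6}$ produces the positive exponent $m_k^{2(0.6)-1} = m_k^{0.2}$.

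Union-bounding over the $O(n/m_k)$ intervals and then over the $O(\log n)$ dyadic scales gives
\[ \sum_k \P(F_k) \;\leq\; C_3 \frac{n}{m_k}\bigg|_{k\text{ smallest}} \log n \cdot \exp(-C_2 (n^{0.3})^{0.2}) \;\leq\; C_4\, n \log n \, \exp(-C_2 n^{0.06}), \]
since the slowest-decaying term is at the smallest scale $m_k = n^{0.3}$. For $n$ sufficiently large this is bounded by $A \exp(-B n^\nu)$ for any $0 < \nu < 0.06$, and the small-$n$ case is absorbed by choosing $A$ large enough. The main obstacle is purely bookkeeping: tracking the scale-dependent target bound $\epsilon(j-i)^{0.6}$ carefully through the dyadic chaining so that constants remain uniform across scales, and verifying that both the Gaussian correction is negligible and the Chernoff exponent $m_k^{0.2}$ remains positive at the smallest scale $m_k = n^{0.3}$; both hinge on the strict inequality $0.6 > 1/2$.
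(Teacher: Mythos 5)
Your argument is correct, but it is more elaborate than the paper's. The paper simply applies a union bound over all pairs directly: by stationarity of the increments, for a fixed gap $k=j-i$ the probability that $|\bar D_j-\bar D_i-2(j-i)|\geq \epsilon k^{0.6}$ equals $\P(|\bar D_k-2k|\geq \epsilon k^{0.6})$, which Lemma \ref{lemma moderate deviations} (applied to the centered geometric increments, exactly as in your recentering step) bounds by $c_1 e^{-c_2 k^{0.2}}$; multiplying by at most $n$ choices of $i$ and summing over $k$ from $\fl{n^{0.3}}$ to $n$ gives $c_1 n^2 e^{-c_2 n^{0.06}}$, which is already of the required form. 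The key point in both arguments is identical: the exponent $(k^{0.6})^2/k=k^{0.2}$ is positive, and at the smallest admissible gap $k=n^{0.3}$ it yields $n^{0.06}$, a stretched exponential that absorbs any polynomial union-bound cost. Your dyadic chaining with Lemma \ref{lemma maximal inequality} is sound --- the Gaussian correction $\sqrt{2N\sigma^2}=O(\sqrt{m_k})$ is indeed negligible against $m_k^{0.6}$ --- but it buys nothing here, since the naive union over the $O(n^2)$ pairs is already affordable; the chaining machinery would only become necessary if the deviation probabilities decayed merely polynomially or the index set were much larger.
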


\begin{proof}
Applying the union bound and the stationarity of random walk increments followed by Lemma \ref{lemma moderate deviations}, we find that there exist $c_1, c_2>0$ such that
\[\begin{split}  \P\left( \max_{1\leq i,j \leq n \atop |i-j|\geq n^{0.3}} |\bar D_j -\bar D_i - 2(j-i)| \geq  \epsilon |i-j|^{0.6} \right)& \leq n \sum_{k=\fl{n^{0.3}}}^{n}  \P\left( |\bar D_k-2k| \geq  \epsilon k^{0.6} \right)\\
& \leq n \sum_{k=\fl{n^{0.3}}}^{n} c_1 e^{- c_2 k^{0.2}} \\
& \leq c_1 n^2 e^{- c_2 n^{0.06}} 
,\end{split}\]
and the lemma follows. 
\end{proof}

\begin{corollary}\label{corollary decrease bound}
Let $\Gamma^n$ be a uniformly random Dyck path of length $2n$ and let $\zeta(\Gamma^n)$ be the number of runs of decreases in $\Gamma^n$ (which also equals the number of runs of increases).  For $k\leq \zeta(\Gamma^n)$, let $D_k$ ($A_k$) be the number of decreases contained in the first $k$ runs of decreases (increases).  There exist constants $A,B,\nu >0$ such that
\[ \P\left( \max_{1\leq i,j \leq \zeta(\Gamma^n) \atop |i-j|\geq n^{0.3}} | D_j -D_i - 2(j-i)| \geq  .1|i-j|^{0.6} \right)\leq A \exp\left( -B n^{\nu}\right)\]
and 
\[ \P\left( \max_{1\leq i,j \leq \zeta(\Gamma^n) \atop |i-j|\geq n^{0.3}} | A_j -A_i - 2(j-i)| \geq  .1|i-j|^{0.6} \right)\leq A \exp\left( -B n^{\nu}\right)\]
\end{corollary}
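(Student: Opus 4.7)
The plan is to deduce the corollary from Proposition \ref{proposition random walk decreases} by a standard conditioning argument together with the time-reversal symmetry of uniform Dyck paths. The key observation is that the uniform Dyck path $\Gamma^n$ has the same distribution as $(S_k)_{0\leq k\leq 2n}$ conditioned on $\{\eta(S)=2n+1\}$, where $S$ is the simple symmetric random walk from the setup of Proposition \ref{proposition random walk decreases}. Under this coupling, the decrease-run statistic $D_k$ of $\Gamma^n$ coincides with the walk quantity $\bar D_k$ for every $k\leq \zeta(\Gamma^n)$, and $\zeta(\Gamma^n)\leq n$ since a Dyck path of length $2n$ has exactly $n$ down-steps.

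With that setup in place, I would first enlarge the index range $\{1,\dots,\zeta(\Gamma^n)\}$ to $\{1,\dots,n\}$ (which can only increase the probability), and then pass from the conditional to the unconditional probability by dividing by $\P(\eta(S)=2n+1)$. Using the classical ballot-type estimate $\P(\eta(S)=2n+1)\sim c n^{-3/2}$, this yields
$$\P\!\left(\max_{1\leq i,j\leq \zeta(\Gamma^n),\,|i-j|\geq n^{0.3}} |D_j-D_i-2(j-i)|\geq .1|i-j|^{0.6}\right) \leq \frac{\P\!\left(\max_{1\leq i,j\leq n,\,|i-j|\geq n^{0.3}} |\bar D_j-\bar D_i-2(j-i)|\geq .1|i-j|^{0.6}\right)}{\P(\eta(S)=2n+1)}.$$
Proposition \ref{proposition random walk decreases} (applied with $\epsilon=0.1$) bounds the numerator by $A e^{-Bn^\nu}$, and the denominator is at least $c n^{-3/2}$, so the quotient is at most $A' n^{3/2} e^{-Bn^\nu}$. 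Absorbing the polynomial prefactor into a slightly smaller stretched exponent gives the claimed bound for $D_k$.

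For the statement about $A_k$, I would appeal to time reversal: if $\Gamma^n$ is uniform on Dyck paths of length $2n$, so is $\tilde\Gamma^n(x):=\Gamma^n(2n-x)$, and under this reflection the runs of increases of $\Gamma^n$ become exactly the runs of decreases of $\tilde\Gamma^n$ (in the reverse order). Since the event in question is invariant under this relabeling (it depends only on $|i-j|$ and on differences $A_j-A_i$, which map to differences of $D$'s under reversal), applying the already-established bound for $D_k$ to $\tilde\Gamma^n$ yields the $A_k$ bound for $\Gamma^n$. There is no real obstacle here; the only quantitative point to check is that the polynomial factor $n^{3/2}$ introduced by conditioning is dominated by the stretched-exponential decay from Proposition \ref{proposition random walk decreases}, which holds for any $\nu>0$.
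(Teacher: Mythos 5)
Your proposal is correct and follows essentially the same route as the paper: identify $(D_k)$ with the run-of-decreases process $(\bar D_k)$ of a simple random walk conditioned on an event of probability $\asymp n^{-3/2}$, enlarge the index range to $\{1,\dots,n\}$, unconditionalize at the cost of an $n^{3/2}$ factor, invoke Proposition \ref{proposition random walk decreases}, and handle $A_k$ by path reversal. The only (immaterial) difference is the exact conditioning event: the paper conditions on $S_{2n}=0$, $S_{2n+1}=1$, $S_k\geq 0$ on $[0,2n]$ rather than on $\eta(S)=2n+1$, a bookkeeping choice that ensures the final run of decreases terminates at time $2n$.
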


\begin{proof}
The reversibility of uniformly random Dyck paths implies that the claims about decreases and increases are equivalent, so we prove the former.  Let $S=(S_k)_{k\geq 0}$ be a simple symmetric random walk started from $0$ and construct $\bar D=(\bar D_k)_{k\geq 1}$ from $S$ as above.  Let $\mathscr{A}_n$ be the event that $S_{2n}=0$, $S_{2n+1}=1$, and $S_k \geq 0$ for $0\leq k\leq 2n$. Additionally, define $\zeta_n(\bar D) = \inf\{k : \bar D_k \geq n\}$.  Observe that
\[ (D_1,\dots, D_{\zeta(\Gamma^n)}) \overset{d}{=} (\bar D_1,\dots, \bar D_{\zeta_n(\bar D)} ) \textrm{ given } \sA_n .\]
Since $\zeta_n(\bar D)\leq n$, it follows from Proposition \ref{proposition random walk decreases} that
\begin{multline*} \P\left( \max_{1\leq i,j \leq \zeta_n(\bar D) \atop |i-j|\geq n^{0.3}} |\bar D_j -\bar D_i - 2(j-i)| \geq  .1|i-j|^{0.6} \ \Big| \ \sA_n \right) \\ \leq  \P\left( \max_{1\leq i,j \leq n \atop |i-j|\geq n^{0.3}} | \bar D_j -\bar D_i - 2(j-i)| \geq  .1|i-j|^{0.6} \ \Big| \ \sA_n \right) \\
\leq A n^{3/2} \exp\left( -B n^{\nu}\right)
,\end{multline*}
since $\P(\sA_n) \sim c n^{-3/2}$ for some $c>0$.
\end{proof}

\section*{Acknowledgement}
We would like to thank Lerna Pehlivan for many helpful suggestions.  We would also like to thank Igor Pak for a helpful conversation.

\bibliographystyle{plain}

\begin{thebibliography}{10}

\bibitem{babson}
Eric Babson and Einar Steingr{\i}msson.
\newblock Generalized permutation patterns and a classification of the
  {M}ahonian statistics.
\newblock {\em S{\'e}m. Lothar. Combin}, 44(B44b):547--548, 2000.

\bibitem{baik1999distribution}
Jinho Baik, Percy Deift, and Kurt Johansson.
\newblock On the distribution of the length of the longest increasing
  subsequence of random permutations.
\newblock {\em Journal of the American Mathematical Society}, 12(4):1119--1178,
  1999.

\bibitem{BJS}
Sara~C. Billey, William Jockusch, and Richard~P. Stanley.
\newblock Some combinatorial properties of {S}chubert polynomials.
\newblock {\em Journal of Algebraic Combinatorics}, 2(4):345--374, 1993.

\bibitem{billey2001}
Sara~C Billey and Gregory~S Warrington.
\newblock {K}azhdan-{L}usztig polynomials for 321-hexagon-avoiding
  permutations.
\newblock {\em Journal of Algebraic Combinatorics}, 13(2):111--136, 2001.

\bibitem{bouvel2009variant}
Mathilde Bouvel and Dominique Rossin.
\newblock A variant of the tandem duplication-random loss model of genome
  rearrangement.
\newblock {\em Theoretical Computer Science}, 410(8):847--858, 2009.

\bibitem{callan}
David Callan.
\newblock Bijections from {D}yck paths to 321-avoiding permutations revisited.
\newblock {\em arXiv preprint arXiv:0711.2684}, 2007.

\bibitem{corteel2007tableaux}
Sylvie Corteel and Lauren~K Williams.
\newblock Tableaux combinatorics for the asymmetric exclusion process.
\newblock {\em Advances in applied mathematics}, 39(3):293--310, 2007.

\bibitem{de1714essai}
Pierre~R{\'e}mond de~Montmort.
\newblock {\em Essai d'analyse sur les jeux de hazards}.
\newblock Chez Claude Jombert... et Jacque Quillau, 1714.

\bibitem{notknuth}
Emeric Deutsch, AJ~Hildebrand, and Herbert~S Wilf.
\newblock Longest increasing subsequences in pattern-restricted permutations.
\newblock {\em Urbana}, 51:61801, 2006.

\bibitem{elizalde1}
Sergi Elizalde.
\newblock Multiple pattern avoidance with respect to fixed points and
  excedances.
\newblock {\em Electron. J. Combin}, 11(1):40, 2004.

\bibitem{elizalde2004statistics}
Sergi Elizalde.
\newblock {\em Statistics on pattern-avoiding permutations}.
\newblock PhD thesis, Massachusetts Institute of Technology, 2004.

\bibitem{elizalde2}
Sergi Elizalde.
\newblock Fixed points and excedances in restricted permutations.
\newblock {\em The Electronic Journal of Combinatorics}, 2012.

\bibitem{elizalde3}
Sergi Elizalde and Igor Pak.
\newblock Bijections for refined restricted permutations.
\newblock {\em Journal of Combinatorial Theory, Series A}, 105(2):207--219,
  2004.

\bibitem{FlSe09}
Philippe Flajolet and Robert Sedgewick.
\newblock {\em Analytic combinatorics}.
\newblock Cambridge University Press, Cambridge, 2009.

\bibitem{gessel1990symmetric}
Ira~M Gessel.
\newblock Symmetric functions and {P}-recursiveness.
\newblock {\em Journal of Combinatorial Theory, Series A}, 53(2):257--285,
  1990.

\bibitem{part2}
Christopher Hoffman, Douglas Rizzolo, and Erik Slivken.
\newblock {Pattern-avoiding permutations and Brownian excursion, Part 2: Fixed
  points}.
\newblock {\em Preprint}.

\bibitem{Ja14}
Svante Janson.
\newblock Patterns in random permutations avoiding the pattern 132.
\newblock {\em arXiv:1401.5679}, 2014.

\bibitem{Ka76}
W.~D. Kaigh.
\newblock An invariance principle for random walk conditioned by a late return
  to zero.
\newblock {\em Ann. Probability}, 4(1):115--121, 1976.

\bibitem{Kit}
Sergey Kitaev.
\newblock {\em Patterns in permutations and words}.
\newblock Monographs in Theoretical Computer Science. An EATCS Series.
  Springer, Heidelberg, 2011.
\newblock With a foreword by Jeffrey B. Remmel.

\bibitem{Knu}
Donald~E. Knuth.
\newblock {\em The art of computer programming. {V}ol. 1: {F}undamental
  algorithms}.
\newblock Second printing. Addison-Wesley Publishing Co., Reading,
  Mass.-London-Don Mills, Ont, 1969.

\bibitem{M}
Percy~A MacMahon.
\newblock Combinatory analysis, 1915.

\bibitem{madras2010random}
Neal Madras and Hailong Liu.
\newblock Random pattern-avoiding permutations.
\newblock {\em Algorithmic Probability and Combinatorics, AMS, Providence, RI},
  pages 173--194, 2010.

\bibitem{ML}
Neal Madras and Lerna Pehlivan.
\newblock Structure of random 312-avoiding permutations.
\newblock {\em arXiv preprint arXiv:1401.6230}, 2014.

\bibitem{mansour}
Toufik Mansour and Alek Vainshtein.
\newblock Restricted permutations, continued fractions, and {C}hebyshev
  polynomials.
\newblock {\em JOURNAL OF COMBINATORICS}, 7(1):R17--R17, 2001.

\bibitem{MaMo03}
Jean-Fran{\c{c}}ois Marckert and Abdelkader Mokkadem.
\newblock The depth first processes of {G}alton-{W}atson trees converge to the
  same {B}rownian excursion.
\newblock {\em Ann. Probab.}, 31(3):1655--1678, 2003.

\bibitem{marcus2004excluded}
Adam Marcus and G{\'a}bor Tardos.
\newblock Excluded permutation matrices and the {S}tanley--{W}ilf conjecture.
\newblock {\em Journal of Combinatorial Theory, Series A}, 107(1):153--160,
  2004.

\bibitem{mp}
Sam Miner and Igor Pak.
\newblock The shape of random pattern-avoiding permutations.
\newblock {\em Adv. in Appl. Math.}, 55:86--130, 2014.

\bibitem{morters2010brownian}
Peter M{\"o}rters and Yuval Peres.
\newblock {\em Brownian motion}, volume~30.
\newblock Cambridge University Press, 2010.

\bibitem{Petrov1}
V.~V. Petrov.
\newblock {\em Sums of independent random variables}.
\newblock Springer-Verlag, New York, 1975.
\newblock Translated from the Russian by A. A. Brown, Ergebnisse der Mathematik
  und ihrer Grenzgebiete, Band 82.

\bibitem{pitmanbook}
J.~Pitman.
\newblock {\em Combinatorial stochastic processes}, volume 1875 of {\em Lecture
  Notes in Mathematics}.
\newblock Springer-Verlag, Berlin, 2006.
\newblock Lectures from the 32nd Summer School on Probability Theory held in
  Saint-Flour, July 7--24, 2002, With a foreword by Jean Picard.

\bibitem{PiRi13}
Jim Pitman and Douglas Rizzolo.
\newblock Schr{\"o}der's problems and scaling limits of random trees.
\newblock {\em Transactions of the American Mathematical Society}, 2015.

\bibitem{Pratt}
Vaughan~R. Pratt.
\newblock Computing permutations with double-ended queues, parallel stacks and
  parallel queues.
\newblock In {\em Proceedings of the Fifth Annual ACM Symposium on Theory of
  Computing}, STOC '73, pages 268--277, New York, NY, USA, 1973. ACM.

\bibitem{Tak91}
Lajos Tak{\'a}cs.
\newblock Conditional limit theorems for branching processes.
\newblock {\em J. Appl. Math. Stochastic Anal.}, 4(4):263--292, 1991.

\bibitem{Tarjan}
Robert Tarjan.
\newblock Sorting using networks of queues and stacks.
\newblock {\em J. ACM}, 19(2):341--346, April 1972.

\bibitem{elf}
Henning {\'U}lfarsson.
\newblock A unification of permutation patterns related to {S}chubert
  varieties.
\newblock {\em Pure Math. Appl. (PU.M.A.)}, 22(2):273--296, 2011.

\bibitem{vershik1981asymptotic}
Anatolii~Moiseevich Vershik and Sergei~Vasil'evich Kerov.
\newblock Asymptotic theory of characters of the symmetric group.
\newblock {\em Functional analysis and its applications}, 15(4):246--255, 1981.

\bibitem{vershik1977asymptotics}
Anatoly~M Vershik and Sergei~V Kerov.
\newblock Asymptotics of {P}lancherel measure of symmetrical group and limit
  form of {Y}oung tables.
\newblock {\em Doklady Akademii Nauk SSSR}, 233(6):1024--1027, 1977.

\end{thebibliography}

\end{document}